\def\draftdate{\today}
\newif\ifedit\editfalse
\newcommand{\bs}{\backslash}
\newcommand{\subdot}{_{\bullet}}
\let\iso\cong
\let\sma\wedge
\mathchardef\varDelta="7101
\newcommand{\DDelta}{{\mathbf \varDelta}}
\mathchardef\varLambda="7103
\newcommand{\LLambda}{{\mathbf \varLambda}}
\newcommand{\epi}{nu}
\newcommand{\Spectra}{\aS}
\newcommand{\Conf}{C}
\newcommand{\Top}{\aT}
\newcommand{\usc}{\psi}
\newcommand{\USC}{\Psi}
\newcommand{\htp}{\simeq}
\renewcommand{\to}{\mathchoice{\longrightarrow}{\rightarrow}{\rightarrow}{\rightarrow}}
\newcommand{\from}{\mathchoice{\longleftarrow}{\leftarrow}{\leftarrow}{\leftarrow}}
\newcommand{\overto}[1]{\xrightarrow{\,#1\,}}
\newcommand{\overfrom}[1]{\xleftarrow{\,#1\,}}
\DeclareMathAlphabet{\catsymbfont}{U}{rsfs}{m}{n}
\newcommand{\aD}{{\catsymbfont{D}}}
\newcommand{\aE}{{\catsymbfont{E}}}
\newcommand{\EFV}[1][{}]{\aE{mb}_{V}^{#1}}
\newcommand{\aF}{{\catsymbfont{F}}}
\newcommand{\aI}{{\catsymbfont{I}}}
\newcommand{\aJ}{{\catsymbfont{J}}}
\newcommand{\aP}{{\catsymbfont{P}}}
\newcommand{\aS}{{\catsymbfont{S}}}
\newcommand{\aT}{{\catsymbfont{T}}}
\newcommand{\bC}{{\mathbb{C}}}
\newcommand{\bD}{{\mathbb{D}}}
\newcommand{\bE}{{\mathbb{E}}}
\newcommand{\bP}{{\mathbb{P}}}
\newcommand{\bR}{{\mathbb{R}}}
\newcommand{\bS}{{\mathbb{S}}}
\newcommand{\bT}{{\mathbb{T}}}
\newcommand{\bZ}{{\mathbb{Z}}}
\newcommand{\oA}{\mathcal{A}}
\newcommand{\oC}{\mathcal{C}}
\newcommand{\Com}{\mathop{\oC\mathrm{om}}\nolimits}
\newcommand{\oD}{\mathcal{D}}
\newcommand{\oE}{\mathcal{E}}
\newcommand{\oEnd}{\oE{nd}}
\newcommand{\oH}{\mathcal{H}}
\newcommand{\oR}{\mathcal{R}}
\def\quickop#1{\expandafter\DeclareMathOperator\csname
#1\endcsname{#1}}
\numberwithin{equation}{section}
\newtheorem{thm}[equation]{Theorem}
\newtheorem{cor}[equation]{Corollary}
\newtheorem{conj}[equation]{Conjecture}
\newtheorem{prop}[equation]{Proposition}
\theoremstyle{definition}
\newtheorem{defn}[equation]{Definition}
\newtheorem{cons}[equation]{Construction}
\newtheorem{ter}[equation]{Terminology}
\newtheorem{notn}[equation]{Notation}
\newtheorem{conv}[equation]{Convention}
\newtheorem{hyp}[equation]{Hypothesis}
\theoremstyle{remark}
\newtheorem{rem}[equation]{Remark}
\newtheorem{example}[equation]{Example}
\newcommand{\term}[1]{\textit{#1}}
\begin{document}

\title[Norms for compact Lie groups]
{Norms for compact Lie groups in equivariant stable homotopy theory}

\author{Andrew J. Blumberg}
\address{Department of Mathematics, Columbia University, 
New York, NY \ 10027}
\email{blumberg@math.columbia.edu}

\author{Michael A. Hill}
\address{Department of Mathematics, University of California, Los
Angeles, CA \ 90095}
\email{mikehill@math.ucla.edu}

\author{Michael A. Mandell}
\address{Department of Mathematics, Indiana University,
Bloomington, IN \ 47405}
\email{mmandell@indiana.edu}

\date{\draftdate} 
\subjclass[2010]{Primary 55P91. Secondary 19D55, 16E40}
\keywords{Multiplicative norms, factorization homology, topological
Hochschild homology}

\begin{abstract}
We propose a construction of an analogue of the Hill-Hopkins-Ravenel
relative norm $N_{H}^{G}$ in the context of a positive dimensional
compact Lie group $G$ and closed subgroup $H$.  We explore expected
properties of the construction.  We show that in the case when $G$ is
the circle group (the unit complex numbers), the proposed construction
here agrees with the relative norm constructed by Angeltveit,
Gerhardt, Lawson, and the authors using the cyclic bar construction.
Our construction is based on a new perspective on equivariant
factorization homology, using framings to convert from actions of one
group to another.
\end{abstract}

\maketitle


\section*{Introduction}

The work of Hill-Hopkins-Ravenel on the Kervaire invariant problem has
reinvigorated interest in the foundations of equivariant stable
homotopy theory.  In particular, the multiplicative norm construction,
a key technical tool in their work, provides a spectral version of the
Evens transfer from group cohomology.

The theory of the norm now seems to complete our understanding of the
multiplicative structure on the (genuine) equivariant stable category
when $G$ is finite: additively, the equivariant stable category is
characterized by the existence of transfers, or equivalently, the
Wirthmuller isomorphism; the multiplicative structure is similarly
characterized by the presence of compatible systems of multiplicative
norm functors.  The combinatorics of the transfers and norms are
controlled by the structure of the $G$-$E_\infty$ operad, and the
additive and multiplicative structures are linked by the
$(\Sigma^{\infty}_+, \gl_1)$ adjunction.

When $G$ is a compact Lie group, the situation is significantly more
subtle and less well-understood.  The additive structure of the
$G$-stable category is still controlled by transfers, or equivalently,
Wirthmuller isomorphisms.  However, the classical multiplicative
structures appear to be different.  Specifically, the multiplicative
structure of a commutative ring $G$-spectrum ($=G$-$E_{\infty}$ ring
spectrum) can be described in terms of an operad, but we know that the
additive structure cannot be described in this way.  Moreover, the
theory of the norm in this setting is incomplete, insofar as the
construction appears to only make sense for subgroups of finite index
in $G$.

This raises the question of what the norm for compact Lie groups
should mean in general.  A first clue is provided by the defining
adjunction for the norm on commutative ring $G$-spectra; in this
setting, the norm $N_H^G$ is the left adjoint to the forgetful functor
from $G$-spectra to $H$-spectra.  These adjoints exist and are
homotopically meaningful for any subgroup $H$ in $G$.  Therefore, we
have norm functors for at least commutative ring $G$-spectra.  To explore this
further, we consider the simplest possible example: $G = S^1$ and the
subgroup is the trivial subgroup.
In this case, we recover a familiar object:
$N_e^{S^1} R$ is precisely $THH(R)$ \cite{ABGHLM}.  This description
now makes sense not just for commutative ring spectra but more
generally for associative ring spectra. 

While $THH(R)$ admits several constructions, its identification in
terms of factorization homology of $S^{1}$ with values in $R$ suggests
an approach to the construction of more general norms using
factorization homology. In the case of norms of the form $N_{e}^{G}$,
it suggests a construction in terms of a genuine equivariant structure
on the factorization homology of $G$; it also indicates the type of
algebras that could admit such a norm.  The construction of a norm
$N_H^G$ requires an equivariant version of a factorization homology
construction.

The purpose of this paper is to propose and outline an approach to
equivariant norms in terms of factorization homology.  Specifically,
we describe a factorization homology style construction of positive
dimensional norms.  We discuss aspects of its expected homotopy type
in terms of geometric fixed point data and lay out a
series of conjectures about the properties of such norms.
In the case of the circle group, we show that the proposed definition
here agrees with the definition in~\cite{ABGHLM}; the conjectures that
pertain to this context are theorems, proved \textit{ibid}.
Moreover, whereas \cite{ABGHLM} only treated the
homotopy type of $THH(R)$ as well-defined in the $\aF$-model
structure (where weak equivalences are detected on passage to fixed
points for finite subgroups of $G$), the work here identifies the full
genuine equivariant homotopy type in the context of the norm.

The factorization homology we use to construct the norm is not
equivariant factorization homology as it is usually construed.  Our
use of factorization homology and its design in Section~\ref{sec:fhne}
has the goal of using $G$-equivariant $H$-framed manifolds to mediate
a conversion of $H$-equivariant orthogonal spectra (with extra
structure) to $G$-equivariant orthogonal spectra.  We are not
constructing a general theory of $G$-equivariant factorization
homology here.  Nevertheless, our construction appears to reproduce
the standard versions of genuine $G$-equivariant factorization
homology for $V$-framed $G$-manifolds in the literature.  To
illustrate this, we show how to obtain a version of $G$-equivariant
factorization homology of $V$-framed $G$-manifolds in
Section~\ref{sec:efh} as a special case of the theory of
Section~\ref{sec:fhne}.

The work here on the norm depends on current work in progress of some
subsets of the authors
which breaks into two projects.  The first, which we cite as [CFH]
studies non-equivariant factorization homology from the perspective of
making as much of the structure as possible covariant with compact Lie
group actions.  A mostly complete draft exists but is not publicly
available; we state the results that depend on it here as
\textbf{theorems} (but we acknowledge that they should more properly
be labeled as \textbf{conjectures}).  The second project, which we
refer to as [PMI] will study generalizations of the norm construction
for finite groups of the form
\[
I'(A \sma_{\Sigma_{q}\wr H}IX^{(q)})
\]
where is a genuine equivariant $H$-spectrum, $A$ is a $G\times
(\Sigma_{q}\wr H)$-space, and $I,I'$ are certain point-set change of universe
functors (for some compact Lie groups $H,G$). In the case when $H$ is a
subgroup of a finite group $G$ and $A$ is the set of (numbered) coset
representatives 
of $H$ in $G$ (plus a disjoint base point), this construction is precisely the norm $N_{H}^{G}X$,
but for other $H$, $G$, and $A$, it produces more general functors
from $H$-spectra to $G$-spectra.  The purpose of [PMI] is to study
the equivariant homotopy theory of such functors and specifically
to verify the expected formulas for geometric fixed point spectra
(generalizing the norm diagonal formulas).  A basic result we will use
is the following generalization of \cite[B.104,B.146]{HHR}:
\par\smallskip
{\narrower\noindent
Let $G$ be a compact Lie group, $A$ a $\Sigma_{q}$-free $G\times
\Sigma_{q}$-CW complex, $U$ a complete $G$-universe, $X'$ a cofibrant
orthogonal spectrum and $X$ either a cofibrant orthogonal spectrum or
a cofibrant commutative ring orthogonal spectrum.  If $X'\to X$ is a
weak equivalence of orthogonal spectra, then
\[
I_{U^{G}}^{U}(A\sma_{\Sigma_{q}}X^{\prime(q)})\to 
I_{U^{G}}^{U}(A\sma_{\Sigma_{q}}X^{(q)})
\]
is a weak equivalence of orthogonal $G$-spectra indexed on $U$.\par}
\smallskip
\noindent
Most of the work of [PMI] beyond the statement above is preliminary,
and anything discussed below that depends on more complicated
results or more complicated constructions of this type is labeled as a
``conjecture''. 

Only the work in Sections~\ref{sec:absolute}--\ref{sec:rel} depend on
the unpublished work [CFH] and [PMI].  Sections~\ref{sec:circle}
and~\ref{sec:efh} depend only on Construction~\ref{cons:compressed};
they are completely independent of the work in progress and
non-conjectural.

\subsection*{Acknowledgments}
The authors were supported in part by NSF grants DMS-2104348,
DMS-2104420, and DMS-2105019.
This material is based upon work supported by the National Science
Foundation under Grant No.~1440140, while some of the authors were in
residence at the Mathematical Sciences Research Institute in Berkeley,
California, during the Fall semester of 2022.  The authors would like
to thank Mike Hopkins as well as their frequent
collaborators Vigleik Angeltveit, Teena Gerhardt, and Tyler Lawson for
many useful conversations on these topics spanning many years.
The authors also thank Inbar Klang and Charles Rezk for helpful comments.

\section{Universes in equivariant stable homotopy theory}\label{sec:univ}

The purpose of this section is to review the role that ``universes''
play in the orthogonal spectrum models of equivariant stable homotopy
theory.  Although constructions like the Hill-Hopkins-Ravenel norm
were anticipated for many years~\cite[\S3--4]{GreenleesMay-MUCompletion},
\cite[1.5]{NielsenRognes-Singer}, it took a surprisingly long time for 
the precise definition to appear because describing the correct
derived functor requires playing off two distinct perspectives on the
role of the universe.  This phenomenon permeates our work here, and we
use this section to carefully explain the situation and our
perspective and terminology.  We make no particular claim to
originality in this section.

Let $G$ be a compact Lie group.  A $G$-universe is a countably infinite
dimensional vector space with linear $G$-action and a $G$-invariant
inner product satisfying the following properties: (i) $U$ contains a
copy of the trivial representation, and (ii) if a given finite dimensional
representation occurs in $U$ as a $G$-stable vector subspace, then $U$
contains a countable direct sum of copies of that representation.  The
inner product space $\bR^{\infty}=\bigcup \bR^{n}$ with the usual
inner product and trivial action is a $G$-universe, and up to
isomorphism, every $G$-universe contains it as a sub inner product
space.  We use the notation $V<U$ to mean that $V$ is a $G$-stable
finite dimensional vector subspace of $U$ (which then inherits the
structure of a $G$-invariant inner product space), and for $W<U$, we write
$V<W$ to mean that $V$ is a $G$-stable vector subspace of $W$ (not
necessarily proper).  In that case, we write $W-V$ for the orthogonal
complement of $V$ in $W$, and for any $V<U$, we write $S^{V}$ for the
one-point compactification of $V$ and $\Sigma^{V}(-)$ for the
$V$-suspension $(-)\sma S^{V}$.

The classical view is that given a universe, we get a category of
spectra (or ``prespectra'' for some authors) \term{indexed on $U$}:
a spectrum $T$ indexed on $U$ consists of a
based $G$-space $T(V)$ for every $V<U$ and a structure $G$-map
$\sigma_{V,W}\colon \Sigma^{W-V}T(V)\to T(W)$ for every $V<W<U$ such
that $\sigma_{V,V}=\id$ and when $V<W<X<U$, $\Sigma_{V,X}=\Sigma_{W,X}\circ
\Sigma^{X-W}\Sigma_{V,W}$, that is, the diagram
\[
\xymatrix@R-1pc@C+2pc{%
\Sigma^{X-W}\Sigma^{W-V}T(V)\ar[r]^-{\Sigma^{X-W}\sigma_{V,W}}
&\Sigma^{X-W}T(W)\ar[d]^-{\sigma_{W,X}}\\
\Sigma^{X-V}T(V)\ar[u]^-{\iso}\ar[r]_-{\sigma_{V,X}}
&T(X)\\
}
\]
commutes. We then define homotopy groups and weak equivalences (for
example as in~\cite[III\S3]{MM}), and
inverting these weak equivalences gives the $U$-universe $G$-equivariant
stable category.  In fact, the $U$-universe $G$-equivariant stable
category is the homotopy category of a ($G$-topological) model
structure on the category of spectra indexed on $U$; we do not need
the details but they can be found for example in~\cite[III\S4]{MM}.  

Various universes lead to different equivariant stable categories.
The role of the universe is to provide $G$-vector spaces that compact
$G$-manifolds $M$ (e.g., orbits) can embed in; which orbits
equivariantly embed in the universe precisely determine the
equivariant stable homotopy theory.  Put another way, the universe
controls which equivariant transfers exist.  In the two extreme cases,
when $U$ contains only the trivial representation (a \term{trivial
universe}), e.g., $U=\bR^{\infty}$, and when $U$ contains every
representation (a \term{complete universe}), the equivariant stable
categories are inequivalent whenever $G$ is not the trivial (one
point) group.  The $U$-universe equivariant stable category is often
called the \term{naive equivariant stable category} when $U$ is a
trivial universe and the \term{genuine equivariant stable category}
when $U$ is a complete universe.  More generally, the adjective
\term{naive} means indexed on a trivial universe and the adjective
\term{genuine} means indexed on a complete universe.

The same approach applies to $G$-equivariant orthogonal spectra.
Given a $G$-universe $U$, we define a $G$-equivariant
orthogonal spectra indexed on $U$ to consist of:
\begin{enumerate}
\item For every $V<U$, a based $G$-space $T(V)$;
\item For every $V,W<U$, a $G$-map $\alpha_{V,W}\colon
O(V,W)_{+}\sma T(V)\to T(W)$, where $O(V,W)$ denotes the
$G$-space of (non-equivariant) isometric isomorphisms from $V$ to $W$;
and  
\item For every $V<W<U$, a $G$-map $\sigma_{V,W}\colon
\Sigma^{W-V}T(V)\to T(W)$ 
\end{enumerate}
satisfying the obvious compatibility relations: the data of (ii) make
$T$ a $G$-topo\-logically enriched functor on the $G$-topological
category with objects $V<U$ and maps $O(-,-)$, the data of (iii)
makes $T$ into a spectrum indexed on $U$, and the maps in (iii) are
$O(V,V)\times O(W,W)$-equivariant.  (This formulation is
slightly different from the formulation in the standard
reference~\cite[II\S2]{MM}, but gives an equivalent category.)
We define the weak equivalences to be the weak equivalences of the
underlying $G$-equivariant prespectra indexed on $U$; inverting these
weak equivalences, the forgetful functor to the $U$-universe
$G$-equivariant stable category is an equivalence. Thus, we can treat
the $U$-universe $G$-equivariant stable category as the localization
of the category of $G$-equivariant orthogonal spectra indexed on $U$
at its natural weak equivalences. 

The category of $G$-equivariant orthogonal spectra indexed
on $U$ has a ($G$-topo\-logical) model structure with fibrations and
weak equivalences created by the forgetful functor to $G$-equivariant
spectra indexed on $U$.  We use the existence of this structure, but
not the details, which can be found for example in~\cite[III\S4]{MM}.

The theory of $G$-equivariant orthogonal spectra admits another
interpretation first articulated by Mandell-May in~\cite[V\S1]{MM} and
popularized by Schwede in~\cite[\S2]{Schwede-EquivariantNotes}.  Let
$\Spectra$ denote the category of (non-equivariant) orthogonal spectra
indexed on $\bR^{\infty}$, and let $\Spectra^{G}_{U}$ be the category
of $G$-equivariant orthogonal spectra indexed on $U$.  Let
$\Spectra^{BG}$ be the category of $G$-objects in $\Spectra$: an
object in $\Spectra^{BG}$ consists of an object $T$ of $\Spectra$ and
an associative and unital action map $G_{+}\sma T\to T$; then
$\Spectra^{BG}$ and $\Spectra^{G}_{\bR^{\infty}}$ are essentially the
same categories and are certainly at least canonically isomorphic.  The observation
of~\cite[V.1.5]{MM} is that even though the point-set categories
$\Spectra^{G}_{U}$ and $\Spectra^{BG}$ are not isomorphic for a
non-trivial universe $U$, they are equivalent: given a spectrum $T$
indexed on $\bR^{\infty}$ with $G$-action and any $n$-dimensional
$G$-inner product space $V$, let
\[
T(V)=O(\bR^{n},V)_{+}\sma_{O(n)}T(\bR^{n}).
\]
Then $T(V)$ is a based $G$-space and the collection $\{T(V)\mid V<U\}$ has
the canonical structure of a $G$-equivariant orthogonal spectrum
indexed on $U$; we write $I_{\bR^{\infty}}^{U}T$ for this object.  We
can also go the other way, if $T$ is a $G$-equivariant orthogonal
spectrum indexed on $U$ and $V$ is any $n$-dimensional $G$-stable
subspace of $U$, the $G$-spaces
\[
O(V,\bR^{n})_{+}\sma_{O(V)}T(V)
\]
(where $O(V)=O(V,V)$) are all canonically isomorphic: any
non-equivariant isometric isomorphism $f\colon V\to V'$ induces the
same isomorphism
\[
O(V,\bR^{n})_{+}\sma_{O(V)}T(V)\to O(V',\bR^{n})_{+}\sma_{O(V')}T(V').
\]
This gives a functor $I_{U}^{\bR^{\infty}}$ from $\Spectra^{G}_{U}$ to
$\Spectra^{G}_{\bR^{\infty}}$ or $\Spectra^{BG}$.  The functors
$I_{\bR^{\infty}}^{U}$ and $I_{U}^{\bR^{\infty}}$ are inverse
equivalences of categories.  More generally, for any pair of
$G$-universes $U$, $U'$, formulas of this type define inverse
equivalences $I_{U}^{U'}$, $I_{U'}^{U}$ between the point-set
categories $\Spectra^{G}_{U}$ and $\Spectra^{G}_{U'}$.  We call these
\term{point-set change of universe functors}.

Since the point-set change of universe functor $I^{\bR^{\infty}}_{U}$
is an equivalences of categories, we can use it to transport the model
structure on $G$-equivariant orthogonal spectra indexed on $U$
($\Spectra^{G}_{U})$ to $G$-equivariant orthogonal spectra indexed on
$\bR^{\infty}$ ($\Spectra^{G}_{\bR^{\infty}}$) or $G$-objects in
orthogonal spectra $\Spectra^{BG}$.  If $U$ is a trivial universe,
then this agrees with the intrinsic model structure on
$\Spectra^{G}_{\bR^{\infty}}$; if $U$ is non-trivial, then this model
structure is different and the homotopy category is inequivalent.
When we view $\Spectra^{G}_{\bR^{\infty}}$ as $G$-objects in
orthogonal spectra ($\Spectra^{BG}$), none of these model structures
are the ``usual'' model structure, which would have as its weak
equivalences the weak equivalences of the underlying orthogonal
spectra. (These are commonly called the \term{Borel equivalences} and
they are the weak equivalences of a \term{Borel model structure}; we
never use these weak equivalences or this model structure in this paper.)

Each universe $U$ therefore produces on $\Spectra^{BG}$ a model
structure that we call the \term{$U$-universe model structure}. We
call its weak equivalences the \term{$U$-universe} weak equivalences.
The $U$-universe model structure on $\Spectra^{BG}$ is Quillen
equivalent (via $I_{\bR^{\infty}}^{U}$ and $I_{U}^{\bR^{\infty}}$) to
the category of $G$-equivariant orthogonal spectra indexed on $U$, and
in particular the homotopy category of the $U$-universe model
structure is equivalent to the $U$-universe $G$-equivariant stable
category.  One can take the perspective then that there is (up to
equivalence) only one point-set category of $G$-equivariant orthogonal
spectra and that the role of universes is to define the weak
equivalences, or more rigidly, the model structure.

The construction of norms intrinsically uses the perspective that the
point-set category of $G$-equivariant orthogonal spectra indexed on a
complete universe is just the category of $G$-objects in orthogonal
spectra.  This paper contains a number of point-set constructions that
only make sense for $G$-objects in orthogonal spectra but that we
argue (or conjecture) are homotopically meaningful in $U$-universe
model structures for a complete universe $U$.  We have chosen to
indicate this by using point-set change of universe functors to
specify inside a point-set construction what our homotopical
perspective on the weak equivalences is.  We start in spectra indexed
on a universe $U$, use the point-set change of universe functor
$I_{U}^{\bR^{\infty}}$ to $\bR^{\infty}$, do a point-set construction,
and finally do the point-set change of universe functor
$I_{\bR^{\infty}}^{U}$ to index on a universe $U'$; this indicates
that we expect the overall construction to convert $U$-universe weak 
equivalences to $U'$-universe weak equivalences at least for nice
(e.g., $U$-universe cofibrant) input.

\begin{rem}
Although the point-set change of universe functor
$I_{U}^{\bR^{\infty}}$ admits a right derived functor, the
construction of the Hill-Hopkins-Ravenel norm (and our constructions
here) use it in a rhetorical and non-homotopical way.  As such, it is
amazing that the overall construction results in a functor that has a
homotopical interpretation, admitting a left derived functor.
\end{rem}

\section{The absolute case}\label{sec:absolute}

We begin with the absolute case of the norm $N_{e}^{G}X$ which takes
non-equivariant input and produces a genuine equivariant
$G$-spectrum.  Fix a compact Lie group $G$ with manifold dimension $d$
and fix a complete $G$-universe $U$ satisfying $U^{G}=\bR^{\infty}$.
We also fix a basis of the tangent space of $G$ at the identity and
use left invariant vector fields to specify a basis for the tangent
space at every point, specifying a parallelization of $G$.  The left
action of $G$ on itself is then through maps of parallelized manifolds
and embeds $G$ as a subgroup of the topological group of automorphisms
of $G$ as a parallelized manifold (see Example~\ref{ex:G}).

We assume a continuous point-set factorization homology functor
\[
\int\colon \aE_{e}\times \Spectra[\oD^{d}] \to \Spectra,
\]
where $\aE_{e}$ is the (topological) category of parallelized
$d$-manifolds (with maps the parallelized embeddings; see Definition~\ref{defn:framemap}),
$\Spectra$ is (as indicated in Section~\ref{sec:univ}) the
(topological) category of orthogonal spectra, $\oD^{d}$ is the 
Boardman-Vogt little $d$-disk operad, and $\Spectra[\oD^{d}]$ is the
(topological) category of $\oD^{d}$-algebras in
$\Spectra$.  Given such a functor, we can 
make the following point-set definition.  In it, (as indicated in
Section~\ref{sec:univ}) $\Spectra^{G}_{U}$ denotes the category of
$G$-equivariant orthogonal spectra indexed on the universe $U$.

\begin{defn}\label{defn:NeG}
Define a point-set functor $N_{e}^{G}\colon \Spectra[\oD^{d}]\to
\Spectra^{G}_{U}$ by 
\[
N_{e}^{G} X = I_{U^{G}}^{U}\int_{G}X
\]
where we regard $\int_{G}X$ as a $G$-object in $\Spectra$ by the
natural left $G$-action on $G$.  As indicated in
Section~\ref{sec:univ}, $I_{U^{G}}^{U}$ denotes the point-set change
of universe functor from $\bR^{\infty}=U^{G}$ to $U$. 
\end{defn}

Definition~\ref{defn:NeG} is a point-set definition that depends
inherently on the point-set construction $\int$. It is not clear nor
do we claim that this has any homotopical properties for an arbitrary
functor $\int$.  However, bar constructions for factorization homology
tend to have good (non-equivariant) homotopical properties that can be
leveraged to study the homotopical properties of the construction
$N_{e}^{G}$.  In particular, the construction we consider in [CFH]
results in a functor $N_{e}^{G}$ that has many of the properties
expected of an equivariant norm, as we now begin to explain.

Before starting an in depth discussion, we need to address a
particular subtlety that arises in the equivariant theory for compact
Lie groups that does not arise in the non-equivariant theory or the
equivariant theory for finite groups.  We can work with unital
$\oD^{d}$-algebras or non-unital $\oD^{d}$-algebras. Standard constructions of
factorization homology can take as input a unital $\oD^{d}$-algebra or a
non-unital $\oD^{d}$-algebra (an algebra over the non-unital little $d$-disk
operad, where we replace $\oD^{d}(0)=0$ with $\oD^{d}(0)=\emptyset$). For
a unital $\oD^{d}$-algebra $X$, there is a natural map from the non-unital
construction $\int^{nu}_{M}X$ to the unital construction $\int_{M}X$
that is always a homotopy equivalence non-equivariantly (but the
homotopy inverse and homotopy data are not natural in $M$).  This map
is generally not an equivariant weak equivalence for $G$ positive
dimensional: when $X$ is the sphere spectrum $\bS$, $\int_{M}\bS$ is
$G$-equivariantly homotopy equivalent to the sphere spectrum, but
$\int^{nu}_{M}\bS$ is $G$-equivariantly homotopy equivalent to
$\Sigma^{\infty}_{+}\mathrm{Ran}(M)$ for the Ran space of $M$ (the
space of finite non-empty subsets of $M$).  Non-equivariantly, the Ran
space is contractible and this is a model of $\bS$, but equivariantly,
when we take $M=G$ (as for the norm above) and we take $H<G$ a positive
dimensional closed subgroup, the geometric fixed points satisfy
\[
(\Sigma^{\infty}_{+}\mathrm{Ran}(G))^{\Phi H}\iso 
\Sigma^{\infty}_{+}(\mathrm{Ran}(G)^{H})=*.
\]
This is the wrong answer because factorization homology should take
smash products to smash products in the $X$ variable, and so the empty
smash product $\bS$ should go to the empty smash product $\bS$.
But this essentially the only thing that goes wrong: technology of
[BHM1] appears sufficient to prove that (under mild hypotheses on $X$,
e.g., the inclusion of the unit $\bS\to X$ is a Hurewicz cofibration)
the unital construction fits into a homotopy pushout square
\[
\xymatrix@-1pc{%
\int^{nu}_{M}\bS\ar[r]\ar[d]&\int^{nu}X\ar[d]\\
\hspace{-2em}
\bS\simeq \int_{M} \bS\ar[r]&\int_{M} X.
}
\]

Ayala and Francis~\cite[2.1.4]{AyalaFrancis-Poincare} define a
filtration on non-unital factorization homology that they call the ``cardinality
filtration'' and they identify the homotopy type of the cofiber in
filtration level $q$ as 
\[
Fil^{q}\int_{M}^{nu}X \bigg/ Fil^{q-1}\int_{M}^{nu}X 
\simeq \Conf^{\epi}(q,M)^{+}\sma_{\Sigma_{q}}X^{(q)}
\]
(in the case of a compact parallelized manifold $M$), where $\Conf^{\epi}$
denotes the subspace of components of the configuration space
where at least one point of the configuration
lies in each component of $M$.  Here $(-)^{+}$ denotes one-point
compactification, and $(-)^{(q)}$ denotes $q$th smash power. In
[CFH], the authors construct a point-set version of this map
with enough naturality in the map and the
homotopies\footnote{Specifically, we can make naturality work for 
a compact Lie group of automorphisms, and we claim no more generality
than that.} that in the case
of $N_{e}^{G}X$, we get an equivariant homotopy equivalence
\[
Fil^{q}\int_{G}^{nu}X \bigg/ Fil^{q-1}\int_{G}^{nu}X \simeq \Conf^{\epi}(q,G)^{+}\sma_{\Sigma_{q}}X^{(q)}.
\]
The point-set change of universe functor preserves $G$-homotopy
equivalences. Moreover, as indicated in the introduction, in 
the case when the underlying orthogonal spectrum of $X$ is cofibrant,
the genuine $G$-equivariant homotopy type of 
\[
I_{U^{G}}^{U}(\Conf(q,G)^{+}\sma_{\Sigma_{q}}X^{(q)})
\]
is invariant under weak equivalences in $X$; this happens in
particular in the case when $X$ is cofibrant as a
$\oD^{d}$-algebra and the result holds also in the case when $X$ is a
cofibrant commutative ring orthogonal spectrum.  This gives the following
result. 

\begin{thm}\label{thm:hi}
Let $X'$ be a $\oD^{d}$-algebra whose underlying orthogonal spectrum
is cofibrant (e.g., cofibrant $\oD^{d}$-algebras) and let $X$ be
either a $\oD^{d}$-algebra whose underlying orthogonal spectrum is
cofibrant or a cofibrant commutative ring orthogonal spectrum.  For
the point-set construction of $\int$ in [CFH], a weak
equivalence $X'\to X$ induces a weak equivalence $N_{e}^{G}X'\to
N_{e}^{G}X$.  
\end{thm}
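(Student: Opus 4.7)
The plan is to reduce the theorem to the non-unital factorization homology construction via the homotopy pushout square displayed in the excerpt, prove the non-unital statement by induction up the Ayala--Francis cardinality filtration, and apply the HHR-style theorem from the introduction at each filtration stage.

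For the reduction, the pushout square
\[
\xymatrix@-1pc{
\int^{nu}_{G}\bS\ar[r]\ar[d]&\int^{nu}_{G}X\ar[d]\\
\bS\simeq\int_{G}\bS\ar[r]&\int_{G}X
}
\]
is $G$-equivariantly natural in $X$ by the construction in [CFH] and remains a genuine $G$-equivariant homotopy pushout after applying $I_{U^{G}}^{U}$, under the cofibrancy hypotheses coming from Hurewicz cofibrancy of the unit $\bS\to X$ combined with the cellular structure of [CFH]. Since the left column of the square is identical on the $X$ and $X'$ sides, the gluing lemma for homotopy pushouts reduces the theorem to proving that $I_{U^{G}}^{U}\int^{nu}_{G}X'\to I_{U^{G}}^{U}\int^{nu}_{G}X$ is a $U$-universe weak equivalence.

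For the non-unital case, I would use the cardinality filtration $\{Fil^{q}\int^{nu}_{G}X\}$. In its [CFH] refinement, this is a sequence of $G$-equivariant Hurewicz cofibrations whose $q$th filtration quotient is $G$-equivariantly homotopy equivalent to $\Conf^{\epi}(q,G)^{+}\sma_{\Sigma_{q}}X^{(q)}$. The configuration space $\Conf^{\epi}(q,G)$ is a union of $\Sigma_{q}$-invariant components of the open $G\times\Sigma_{q}$-manifold $\Conf(q,G)\subset G^{q}$, where $G$ acts by diagonal left translation and $\Sigma_{q}$ permutes coordinates freely; hence it is a $\Sigma_{q}$-free $G\times\Sigma_{q}$-CW complex. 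Applying the HHR-style theorem from the introduction then gives, for every $q$, that
\[
I_{U^{G}}^{U}\bigl(\Conf^{\epi}(q,G)^{+}\sma_{\Sigma_{q}}X^{\prime(q)}\bigr)\to
I_{U^{G}}^{U}\bigl(\Conf^{\epi}(q,G)^{+}\sma_{\Sigma_{q}}X^{(q)}\bigr)
\]
is a $U$-universe weak equivalence of orthogonal $G$-spectra, uniformly in both cases of the hypothesis on $X$. Since $I_{U^{G}}^{U}$ preserves Hurewicz cofibrations, pushouts, and sequential colimits along such cofibrations, and since $U$-universe weak equivalences satisfy the gluing and transfinite composition properties, an induction up the filtration propagates these stagewise equivalences through each $Fil^{q}$ and to the colimit.

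The hard part will be controlling cofibrancy throughout. Because $I_{U^{G}}^{U}$ is not a Quillen left adjoint for the $U$-universe model structure, its homotopical behavior depends entirely on cofibrancy of its input, and one must verify at every stage of the cardinality filtration and in the reduction pushout that the relevant objects and maps are cofibrant enough for the HHR-style theorem and the gluing and colimit arguments to apply. This is also where the commutative ring hypothesis earns its keep: the powers $X^{(q)}$ are not homotopy invariant for cofibrant commutative ring orthogonal spectra in general, but the $\Sigma_{q}$-free expressions $\Conf^{\epi}(q,G)^{+}\sma_{\Sigma_{q}}X^{(q)}$ remain invariant under $I_{U^{G}}^{U}$ by the HHR-style theorem, which is precisely the common input that makes the filtration argument work uniformly in both the $\oD^{d}$-algebra and the commutative ring case.
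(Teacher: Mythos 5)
Your proposal follows the same route as the paper's own discussion preceding the theorem: reduce to the non-unital construction via the unital/non-unital homotopy pushout square, induct up the Ayala--Francis cardinality filtration whose quotients are $G$-equivariantly identified with $\Conf^{\epi}(q,G)^{+}\sma_{\Sigma_{q}}X^{(q)}$, and apply the [PMI]/HHR-style invariance statement after the point-set change of universe. The only difference is that you make explicit the cofibrancy, gluing, and colimit bookkeeping that the paper leaves implicit, so this is essentially the paper's argument.
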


\begin{cor}\label{cor:hi}
The left derived functor of $N_{e}^{G}$ exists.  Moreover, the
composite with the derived functor of the forgetful functor
$\iota\colon \Com\to \Spectra[\oD^{d}]$ from commutative ring
orthogonal spectra to $\oD^{d}$-algebras is the derived
functor of the composite $N_{e}^{G}\iota$.
\end{cor}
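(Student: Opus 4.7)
The plan is to deduce both parts of the corollary directly from the two clauses of Theorem~\ref{thm:hi}, whose strength is precisely calibrated for this comparison. For the existence of the left derived functor, I would take $X' \to X$ to be any weak equivalence between cofibrant $\oD^{d}$-algebras; since cofibrant $\oD^{d}$-algebras have cofibrant underlying orthogonal spectra, Theorem~\ref{thm:hi} applies (with both $X'$ and $X$ in the first alternative) and gives that $N_{e}^{G}X'\to N_{e}^{G}X$ is a weak equivalence. Brown's lemma (or the standard criterion for existence of derived functors on a model category) then produces $\dL N_{e}^{G}$, computed by $\dL N_{e}^{G}(X)=N_{e}^{G}(QX)$ for any cofibrant replacement $QX\to X$ in $\Spectra[\oD^{d}]$.

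For the second assertion, I would first observe that the forgetful functor $\iota\colon \Com\to \Spectra[\oD^{d}]$ preserves all weak equivalences (they are detected on the underlying orthogonal spectra in both categories), so $\dL\iota$ exists and coincides with $\iota$ on the homotopy category. The natural comparison map to produce is then
\[
\dL N_{e}^{G}(\dL\iota X)\longrightarrow \dL(N_{e}^{G}\iota)(X)
\]
for $X$ a cofibrant commutative ring orthogonal spectrum, which I would build by taking a cofibrant replacement $Q(\iota X)\to \iota X$ in $\Spectra[\oD^{d}]$ and considering the induced map $N_{e}^{G}(Q\iota X)\to N_{e}^{G}(\iota X)$.

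The key leverage comes from the second alternative in Theorem~\ref{thm:hi}: since $X$ is cofibrant in $\Com$, we may apply the theorem with $X'=Q(\iota X)$ (a cofibrant $\oD^{d}$-algebra) and $X=\iota X$ (a cofibrant commutative ring orthogonal spectrum) to conclude that $N_{e}^{G}(Q\iota X)\to N_{e}^{G}(\iota X)$ is a weak equivalence. This identifies $\dL N_{e}^{G}(\iota X)$ with $N_{e}^{G}(\iota X)$, and an entirely analogous argument using Theorem~\ref{thm:hi} and the two-out-of-three property shows that $N_{e}^{G}\iota$ already takes weak equivalences between cofibrant commutative rings to weak equivalences, so that $\dL(N_{e}^{G}\iota)(X)\simeq N_{e}^{G}(\iota X)$ as well. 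The comparison map is therefore a weak equivalence.

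The only subtle point is exactly the one Theorem~\ref{thm:hi} was designed to handle: $\iota$ does not send cofibrant commutative rings to cofibrant $\oD^{d}$-algebras, so without the second clause of the theorem one would not be able to compare the two derived composites. Given that clause, however, the argument is formal; the remaining verification is simply that one can arrange the cofibrant replacements and the two-out-of-three invocations functorially in $X$, which is standard for the functorial cofibrant replacements available in the relevant model structures.
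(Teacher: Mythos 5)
Your argument is correct and is exactly the intended one: the paper treats this as an immediate consequence of Theorem~\ref{thm:hi}, deriving $N_{e}^{G}$ via cofibrant replacement in $\Spectra[\oD^{d}]$ and using the second alternative of the theorem (with $X'$ a cofibrant $\oD^{d}$-algebra replacement of $\iota X$ and $X$ a cofibrant commutative ring) together with two-out-of-three to identify the two derived composites. You also correctly isolate the only subtlety, namely that $\iota$ of a cofibrant commutative ring need not have cofibrant underlying spectrum, which is precisely what the second clause of Theorem~\ref{thm:hi} is there to handle.
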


In the case when $X=R$ is a cofibrant commutative algebra, we have
another interpretation of the homotopy type of $N_e^G R$, which aligns
with the finite theory.  To start, the category of commutative ring
orthogonal spectra admits indexed colimits over topological spaces; in
particular this means that for a space $M$, we have a commutative
orthogonal spectrum $R\otimes M$ with the universal property that the
space of maps from $R\otimes M$ to a commutative ring orthogonal
spectrum $A$ is homeomorphic to the space of maps from $M$ to the
space of maps from $R$ to $A$ 
\[
\Top(M,\Spectra[\Com](R,A))\iso \Spectra[\Com](R\otimes M,A).
\]
In the case $M=G$, this means that
$(-)\otimes G$ is the free functor from the category of commutative
ring orthogonal spectra to the category of left $G$-objects in
commutative ring orthogonal spectra.  The point-set change of universe
$I_{U^{G}}^{U}$ is the free functor from the category of left
$G$-objects in commutative ring orthogonal spectra to the category of
commutative ring orthogonal $G$-spectra indexed on $U$.  Thus,
$I_{U^{G}}^{U}(-\otimes G)$ is the free functor from the category of
commutative ring orthogonal spectra to commutative ring orthogonal
$G$-spectra indexed on $U$ (and as such it is clearly a Quillen left
adjoint, and in particular preserves weak equivalences between
cofibrant objects). In the case when $G$ is a finite group, the tensor
$R\otimes G$ is just the smash power of $R$ indexed on the elements of
$G$ and the change of universe $I_{U^{G}}^{U}(R\otimes G)$ is
precisely the norm $N_{e}^{G}R$ \cite[2.17--18]{ABGHLM},
\cite[A.52,A.56]{HHR}.

The tensor $R\otimes G$ also has a cardinality
filtration, but in this case the associated graded cofibers look like 
\[
Fil^{q}(R\otimes G)\big/ Fil^{q-1}(R\otimes G) \simeq \Conf(q,G)^{+}\sma_{\Sigma_{q}}(R/\bS)^{(q)}.
\]
If the unit map $\bS\to R$ is a Hurewicz cofibration of the underlying
orthogonal spectra, then the maps in the filtration are Hurewicz
cofibrations and the display above is a $G$-equivariant homotopy
equivalence.  The construction of $\int$ in [CFH] admits a
corresponding filtration and a filtration preserving map $\int_{G}R\to
R\otimes G$, which induces a homotopy equivalence on associated graded
cofibers; the constructions are natural enough that the comparison is
$G$-equivariant, but in this case we have less sharp results on the
covariance of the homotopies and do not know that we can make the
homotopies $G$-equivariant.  This is not good enough to directly
conclude that the map after point-set change of universe
$I_{U^{G}}^{U}$ is a weak equivalence.  Nevertheless, the goal of
[PMI] is to study geometric fixed points of constructions of the form
\[
I_{U^{G}}^{U}(\Conf(q,G)^{+}\sma_{\Sigma_{q}}X^{(q)}).
\]
Let $K<G$ be a closed subgroup.  We note that $\Conf(q,G)^{K}$ is
empty unless $K$ is a finite group whose cardinality divides $q$, in
which case we have a canonical identification 
\[
\Conf(q,X)^{K}\iso \Conf(q/\#K,K \bs G)
\]
(where $K\bs G$ is the left quotient orbit space).
We therefore expect that for reasonable $X$, we will have 
\[
(I_{U^{G}}^{U}(\Conf(q,G)^{+}\sma_{\Sigma_{q}}X^{(q)}))^{\Phi K}\simeq *
\]
if $K$ is positive dimensional or $K$ is finite and $\#K$ does not
divide $q$, and
\[
(I_{U^{G}}^{U}(\Conf(q,G)^{+}\sma_{\Sigma_{q}}X^{(q)}))^{\Phi K}
\simeq \Conf(p,G)^{+}\sma_{\Sigma_{p}}X^{(p)}
\]
when $p=q/\#K$ is an integer.  A careful study of the comparison map
$N_{e}^{G}R\to I_{U^{G}}^{U}(R\otimes G)$ should prove that it is a
homotopy equivalence on each geometric fixed point spectrum, which
would then prove the following as a result.

\begin{conj}
Let $R$ be a cofibrant commutative algebra.  Then there is a
natural weak equivalence of genuine $G$-spectra 
\[
N_e^G R \htp I_{U^{G}}^{U}(R \otimes G)
\]
where $I_{U^{G}}^{U}(- \otimes G)$ is the left adjoint of the
forgetful functor from commutative ring orthogonal $G$-spectra indexed
on $U$ to commutative ring spectra.
\end{conj}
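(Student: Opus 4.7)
The plan is to construct a natural $G$-equivariant comparison map $\int_G R \to R \otimes G$, apply the point-set change of universe $I_{U^G}^U$, and verify the resulting map $N_e^G R \to I_{U^G}^U(R \otimes G)$ is a weak equivalence of genuine $G$-spectra by checking it on geometric fixed points $\Phi K$ for every closed subgroup $K < G$. The comparison map itself arises because $R \otimes G$ is the free left $G$-object on $R$ in commutative ring orthogonal spectra, so the $G$-action on $\int_G R$ produces a unique factorization; [CFH] provides this map and arranges it to be compatible with the cardinality filtrations on both sides.

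To verify the weak equivalence, I would work through the filtration. Since $R$ is cofibrant, the unit $\bS \to R$ is a Hurewicz cofibration, so the homotopy pushout square displayed in the introduction reduces the analysis of $\int_G R$ to the non-unital factorization homology $\int_G^{nu} R$. Both $\int_G^{nu} R$ and the analogous filtration quotient of $R \otimes G$ have $q$-th subquotients weakly equivalent to $\Conf(q,G)^+ \sma_{\Sigma_q}(R/\bS)^{(q)}$, and the $G$-equivariant comparison map realizes this equivalence on subquotients. Since the filtration maps are Hurewicz cofibrations and both $I_{U^G}^U$ and $(-)^{\Phi K}$ commute with the resulting colimits, it suffices to show that on each subquotient the induced map is a weak equivalence after $I_{U^G}^U(-)^{\Phi K}$, and then assemble inductively.

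The geometric fixed point computation is where [PMI] is required. For a closed subgroup $K < G$, the $K$-fixed points $\Conf(q, G)^K$ are empty unless $K$ is finite with $\#K \mid q$, in which case they are identified with $\Conf(q/\#K, K\bs G)$. The [PMI] analysis uses this to identify
\[
\bigl(I_{U^G}^U(\Conf(q,G)^+ \sma_{\Sigma_q} (R/\bS)^{(q)})\bigr)^{\Phi K}
\]
as contractible in the first case and as $\Conf(p, K\bs G)^+ \sma_{\Sigma_p}(R/\bS)^{(p)}$ (with $p = q/\#K$) in the second. One then checks by naturality that the comparison map induces the identity on these geometric fixed point subquotients up to weak equivalence.

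The main obstacle is the subtlety flagged in the excerpt: although the comparison $\int_G R \to R \otimes G$ is $G$-equivariant and an underlying homotopy equivalence on subquotients, the homotopy inverse data are not known to be $G$-equivariant, so one cannot directly conclude that it is a weak equivalence of $G$-spectra at the subquotient level. The role of [PMI] is to bypass this by computing $\Phi K$ of $I_{U^G}^U$ applied to $\Sigma_q$-equivariant smash products of the form $\Conf(q,G)^+ \sma_{\Sigma_q}X^{(q)}$ directly from the underlying orthogonal spectrum data, a computation that does not require rigid $G$-equivariant homotopies. Once [PMI] supplies this computation, the inductive assembly from subquotients to the full spectrum, and the passage from $\Phi K$-equivalences for all closed $K$ to a genuine $G$-equivalence, is routine.
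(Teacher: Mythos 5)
Your proposal follows essentially the same route the paper itself sketches in the paragraphs preceding this statement: build the comparison map $N_e^G R \to I_{U^{G}}^{U}(R\otimes G)$ from the universal property of $R\otimes G$, compare the cardinality filtrations, identify the associated graded pieces with $\Conf(q,G)^{+}\sma_{\Sigma_q}(R/\bS)^{(q)}$, and check the map on derived geometric fixed points $\Phi^K$ for all closed $K<G$ using the expected [PMI] identifications. So the strategy matches the paper's intended one.

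The gap is that this statement is a \emph{conjecture} in the paper precisely because the two ingredients you invoke as available are not yet established: (i) the [PMI] computation of $(I_{U^{G}}^{U}(\Conf(q,G)^{+}\sma_{\Sigma_q}X^{(q)}))^{\Phi K}$ is explicitly flagged as a goal of work in progress, not a result (only the single weak-invariance statement quoted in the introduction is asserted as usable), and (ii) the paper states that the filtration-preserving comparison $\int_G R\to R\otimes G$ is only known to be a homotopy equivalence on associated graded pieces with homotopy inverses that are \emph{not} known to be $G$-equivariant, so even granting (i) one still has to carry out ``a careful study of the comparison map'' on geometric fixed points, which is exactly the unfinished step. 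Your proposal names this obstacle but then asserts that [PMI] ``bypasses'' it and that the remaining assembly is ``routine''; that assertion is the content of the conjecture, not a proof of it. The final reduction from $\Phi^K$-equivalences for all closed $K$ to a genuine equivalence is indeed standard, but everything feeding into it remains conditional.
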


We can use the same ideas as in the discussion preceding the previous
conjecture to study the geometric fixed points of the construction
$N_{e}^{G}$ for more general $\oD^{d}$-algebras.  In the case when
$K<G$ is positive dimensional, the following should hold.

\begin{conj}\label{conj:geopos}
Let $K<G$ be a closed subgroup of positive dimension.  If $X$ is a
$\oD^{d}$-algebra whose underlying orthogonal spectrum is cofibrant or
$X$ is a cofibrant commutative ring orthogonal spectrum, then the inclusion
of the unit $\bS\to X$ induces a weak equivalence of derived geometric
$K$-fixed point spectra
\[
(N_{e}^{G}X)^{\Phi K}\simeq \bS.
\]
\end{conj}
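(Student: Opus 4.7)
The plan is to reduce the claim to a vanishing statement on the associated graded pieces of the non-unital factorization homology, which in turn reduces to a geometric fact about positive-dimensional subgroups acting freely by left translation. First I would start from the homotopy pushout square
\[
\xymatrix@-1pc{
\int^{nu}_{G}\bS\ar[r]\ar[d]&\int^{nu}_{G}X\ar[d]\\
\bS\simeq \int_{G} \bS\ar[r]&\int_{G} X
}
\]
recorded in Section~\ref{sec:absolute}. Applying the point-set change of universe $I_{U^{G}}^{U}$, which preserves $G$-equivariant homotopy equivalences, preserves this as a $G$-equivariant homotopy pushout; applying derived geometric $K$-fixed points, which commute with homotopy pushouts, it then suffices to show that
\[
\bigl(I_{U^{G}}^{U} \textstyle\int^{nu}_{G}Y\bigr)^{\Phi K}\simeq *
\]
for $Y=\bS$ and $Y=X$. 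Combined with the pushout and the easy identification $N_{e}^{G}\bS\simeq \bS$ (from $\int_G \bS\simeq \bS$ and change of universe), this yields $(N_{e}^{G}X)^{\Phi K}\simeq \bS$.

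To establish the vanishing, I would use the cardinality filtration of $\int^{nu}_{G}Y$ constructed in [CFH] after Ayala--Francis, whose associated graded pieces are
\[
\Conf^{\epi}(q,G)^{+}\sma_{\Sigma_{q}}Y^{(q)}.
\]
Since $I_{U^G}^U$ preserves the cofibration filtration and derived $\Phi^K$ preserves cofiber sequences, it is enough to prove that for each $q\geq 1$,
\[
\bigl(I_{U^{G}}^{U}(\Conf^{\epi}(q,G)^{+}\sma_{\Sigma_{q}}Y^{(q)})\bigr)^{\Phi K}\simeq *.
\]

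The essential geometric input is that $K$ acts freely on $G$ by left translation, so every nonempty $K$-orbit in $G$ has dimension $\dim K\geq 1$ and is in particular infinite. Consequently, for any graph subgroup $\Lambda<K\times \Sigma_{q}$ that projects isomorphically onto $K$ (corresponding to a homomorphism $K\to \Sigma_{q}$), the $\Lambda$-fixed subspace of $\Conf(q,G)$ is empty: a $\Lambda$-fixed ordered configuration $(g_{1},\dots,g_{q})$ would force the finite set $\{g_{1},\dots,g_{q}\}$ to be a nonempty union of $K$-orbits, which is impossible.

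The main obstacle is converting this geometric emptiness into vanishing of the genuine $G$-spectrum after smashing with $Y^{(q)}$ and applying $I_{U^{G}}^{U}$. This is precisely the diagonal-style identification of
\[
\bigl(I_{U^{G}}^{U}(A\sma_{\Sigma_{q}}Y^{(q)})\bigr)^{\Phi K}
\]
in terms of the fixed subspaces $A^{\Lambda}$, indexed over conjugacy classes of graph subgroups $\Lambda$, that [PMI] is designed to supply as a positive-dimensional generalization of the norm diagonal formula \cite[B.104,B.146]{HHR}. Given that input, the emptiness of $\Conf(q,G)^{\Lambda}$ for every relevant $\Lambda$ makes each layer geometrically $K$-acyclic, the cardinality filtration assembles these to the vanishing of $(I_{U^{G}}^{U}\int^{nu}_{G}Y)^{\Phi K}$, and the pushout argument above yields the conjectured equivalence. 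Everything beyond the [PMI] diagonal input is routine filtration and fixed-point bookkeeping.
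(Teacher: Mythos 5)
Be aware that the statement you are proving is stated in the paper as a \emph{conjecture}, not a theorem, and your proposal follows essentially the same route the paper sketches in the surrounding discussion: reduce to the non-unital theory via the unital/non-unital homotopy pushout square, run the Ayala--Francis cardinality filtration with layers $\Conf^{\epi}(q,G)^{+}\sma_{\Sigma_{q}}Y^{(q)}$, observe that the relevant (graph-subgroup) fixed subspaces of $\Conf(q,G)$ are empty because a positive-dimensional $K$ acts freely on $G$ with infinite orbits, and invoke a diagonal-style formula for $\bigl(I_{U^{G}}^{U}(A\sma_{\Sigma_{q}}Y^{(q)})\bigr)^{\Phi K}$ to convert that emptiness into vanishing of geometric fixed points. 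So as a strategy, your write-up matches the authors' intent, and your graph-subgroup bookkeeping is the correct way to make precise the paper's remark that $\Conf(q,G)^{K}$ is empty for $K$ of positive dimension.

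However, this is not a proof, and the gap is exactly the step you label as ``precisely the input [PMI] is designed to supply.'' That geometric fixed point formula for $I_{U^{G}}^{U}(A\sma_{\Sigma_{q}}Y^{(q)})$ with $G$ a positive-dimensional compact Lie group is unproven work in progress; the only [PMI] result the paper allows itself to use is the homotopy-invariance statement quoted in the introduction, which says nothing about $\Phi^{K}$. This is the reason the statement is a conjecture rather than a theorem, and your argument does not remove that dependence. Two further points you dismiss as routine also need care: (a) the cardinality filtration is an infinite filtration, so deducing vanishing of $\Phi^{K}$ of the whole non-unital factorization homology from vanishing on each associated graded layer requires knowing the filtration is by suitably cofibrant maps (so the colimit is a homotopy colimit) and that derived $\Phi^{K}$ commutes with it, together with an equivariant identification of the layers, which [CFH] only supplies up to $G$-homotopy equivalence under hypotheses; and (b) the unital/non-unital pushout square itself is asserted in the paper only under a hypothesis such as the unit $\bS\to X$ being a Hurewicz cofibration, and only as something the technology ``appears sufficient to prove,'' so it is not presently available in the generality of your cofibrancy assumptions (in particular, cofibrancy of the underlying spectrum of a $\oD^{d}$-algebra does not obviously give the unit condition). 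In short: correct strategy, same as the paper's, but the conclusion remains conditional on the unestablished [PMI] and [CFH] inputs, so it cannot be graded as a complete proof.
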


The previous result gives further justification for the identification
of $N_{e}^{G}$ as a norm, as its analogue is known to hold for the functor
$I_{U^{G}}^{U}(R\otimes G)$. 

\begin{thm}[Hill-Hopkins~{\cite{HillHopkins-EqSymMon}}]
Let $R$ be a cofibrant commutative ring orthogonal spectrum, and let
$K<G$ be a closed subgroup of positive dimension.  Then the inclusion
of the unit $\bS$ in $R$ induces a weak equivalence of derived
geometric $K$-fixed point spectra
\[
(I_{U^{G}}^{U}(R\otimes G))^{\Phi K}\simeq \bS
\]
\end{thm}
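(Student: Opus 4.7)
The plan is to exploit the cardinality filtration of $R\otimes G$ described just before the statement, together with the observation that positive-dimensional closed subgroups of $G$ have empty fixed sets on configuration spaces of $G$.

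First I would recall the filtration
\[
\bS = Fil^{0}(R\otimes G)\to Fil^{1}(R\otimes G)\to \cdots \to R\otimes G
\]
of commutative ring orthogonal $G$-spectra, with associated graded
\[
Fil^{q}(R\otimes G)/Fil^{q-1}(R\otimes G)\simeq \Conf(q,G)^{+}\sma_{\Sigma_{q}}(R/\bS)^{(q)}.
\]
Since $R$ is cofibrant, the unit $\bS\to R$ is a Hurewicz cofibration of underlying orthogonal spectra, so the filtration is by Hurewicz cofibrations and $R\otimes G$ is the (homotopy) colimit. The point-set change of universe $I_{U^{G}}^{U}$ is strong symmetric monoidal and preserves colimits and Hurewicz cofibrations, so it carries this to a filtration of $I_{U^{G}}^{U}(R\otimes G)$ whose $Fil^{0}$ layer is $\bS$ and whose $q$th associated graded is $I_{U^{G}}^{U}(\Conf(q,G)^{+}\sma_{\Sigma_{q}}(R/\bS)^{(q)})$.

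Next I would take derived geometric $K$-fixed points layer by layer. Since $(-)^{\Phi K}$ commutes with sequential homotopy colimits along cofibrations and with cofiber sequences, it suffices to establish two things: that $\bS^{\Phi K}\simeq \bS$, which is immediate because $\bS$ carries the trivial $G$-action; and that
\[
\bigl(I_{U^{G}}^{U}(\Conf(q,G)^{+}\sma_{\Sigma_{q}}(R/\bS)^{(q)})\bigr)^{\Phi K}\simeq *
\]
for every $q\geq 1$ and every positive-dimensional closed $K<G$. The key geometric input is that $K$ acts on $\Conf(q,G)$ via left translation on $G$, and a configuration $\{g_{1},\ldots,g_{q}\}$ is $K$-fixed only if each orbit $Kg_{i}$ is contained in it. Since left translation on $G$ is free, $Kg_{i}$ is homeomorphic to $K$, so finiteness of the configuration forces $K$ to be finite. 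Therefore $\Conf(q,G)^{K}=\emptyset$ for positive-dimensional $K$ and all $q\geq 1$. Combined with the expected geometric fixed point formula for $I_{U^{G}}^{U}(A\sma_{\Sigma_{q}}X^{(q)})$ described in the paragraph preceding the statement, emptiness of $\Conf(q,G)^{K}$ forces each layer to have contractible $\Phi K$-fixed points.

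Assembling the pieces gives $(I_{U^{G}}^{U}(R\otimes G))^{\Phi K}\simeq \bS$. The main obstacle is ensuring the homotopical compatibility between $(-)^{\Phi K}$, $I_{U^{G}}^{U}$, and the cardinality filtration, i.e.\ that the filtration is cofibrant enough for the layer-by-layer cofiber analysis to compute derived geometric fixed points of the colimit, and that the geometric fixed points of each layer are controlled by $\Conf(q,G)^{K}$ in the predicted way. Both points reduce to the symmetric-power technology discussed in [PMI] applied to the $G\times \Sigma_{q}$-space $\Conf(q,G)$; once that is in hand, the combinatorial input is precisely the free-action observation above, and the theorem follows.
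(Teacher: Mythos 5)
Your geometric observation is the right one, and it is in fact the same combinatorial heart as the paper's argument: a positive-dimensional $K$ acts on $G$ by free left translation, so no finite subset of $G$ can be $K$-invariant. But your proof has a genuine gap in how it converts that observation into a statement about \emph{derived} geometric fixed points. You filter $R\otimes G$ by cardinality and then need
\[
\bigl(I_{U^{G}}^{U}(\Conf(q,G)^{+}\sma_{\Sigma_{q}}(R/\bS)^{(q)})\bigr)^{\Phi K}\simeq *,
\]
and you correctly identify that this ``reduces to the symmetric-power technology discussed in [PMI].'' That is precisely the problem: the geometric fixed point formula for $I_{U^{G}}^{U}(A\sma_{\Sigma_{q}}X^{(q)})$ is stated in the paper only as an expectation (``we therefore expect that for reasonable $X$, we will have\dots''), and [PMI] is explicitly flagged as work in progress whose only usable output so far is a homotopy-invariance statement, not a computation of $\Phi^{K}$. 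The theorem you are proving is asserted unconditionally, so its proof cannot rest on that conjectural formula.

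The paper's actual proof takes a different and self-contained route: it assumes $R$ is a cell commutative ring orthogonal spectrum, observes that the underlying orthogonal $G$-spectrum of $I_{U^{G}}^{U}(R\otimes G)$ is then built from $\bS$ by attaching cells of the form $G^{q}\sma_{H}\Sigma^{\infty}_{V}(B(W),\partial B(W))_{+}$ with $q>0$ and $H$ a \emph{finite} subgroup of $\Sigma_{q}$, and checks directly that the spaces of such a cell pair have no $K$-fixed points besides the basepoint when $K$ is positive dimensional (again because $K$ cannot act on a finite index set through a finite group without a positive-dimensional subgroup acting trivially, contradicting freeness of translation). The standard cell-by-cell statement \cite[V.4.8]{MM} then shows each cell attachment is invisible to derived geometric $K$-fixed points. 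In short: replace the cardinality filtration by the cell filtration of the cofibrant commutative ring spectrum, and the conjectural smash-power formula by the established fact that geometric fixed points kill cells with $K$-free cell spaces. If you want to salvage your outline, that substitution is the missing step.
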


\begin{proof}
(Compare~\cite[8.5]{HillHopkins-EqSymMon}.)  For notational
convenience, let $A=I_{U^{G}}^{U}(R\otimes G)$.  Without loss of
generality, we can assume that $R$ is a cell commutative orthogonal
spectrum; then $A$ is a cell commutative $G$-spectrum built from
attaching a commutative ring cell of the form
\[
\bP_{G} (G_{+}\sma \Sigma^{\infty}_{\bR^{m}}(B^{n},\partial
B^{n})_{+})
\]
for each cell of the form
\[
\bP (\Sigma^{\infty}_{\bR^{m}}(B^{n},\partial B^{n})_{+})
\]
building $R$ (where $\bP$ and $\bP_{G}$ are the free functors from
orthogonal spectra to commutative ring orthogonal spectra and from
orthogonal $G$-spectra to commutative ring orthogonal $G$-spectra,
respectively, and $\Sigma^{\infty}_{\bR^{m}}$ denotes the left adjoint
to the $\bR^{m}$ space functor in either category).  The underlying
orthogonal $G$-spectrum of $A$ is then built from $\bS$ by attaching
orthogonal spectrum cells of the form
\[
C=G^{q}\sma_{H} \Sigma^{\infty}_{V}(B(W),\partial B(W))_{+}
\]
where $q>0$, $H$ is a subgroup of $\Sigma_{q}$, $V$ and $W$ are finite
dimensional inner product spaces with orthogonal $H$-actions, and
$B(W)$ denotes the unit ball (or more naturally, $B(W)$ is a polydisk
$B^{n_{1}}\times \dotsb \times B^{n_{q}}$ with $H$ acting by permuting
the factors). Precisely, for $Z\subset U$ a finite dimensional
$G$-stable subspace, the $Z$ space pair of the orthogonal $G$-spectrum
pair $C$ is
\[
C(Z)=G^{q}_{+}\sma_{H} (\aJ(V,Z)\sma (B(W),\partial B(W))_{+})
\]
(where $\aJ$ is the Thom space of~\cite[II.4.1]{MM}).  Because $H$ is
finite and $K$ is not, neither space in the pair $C(Z)$ can have any
$K$-fixed points other than the base point, and so both genuine
$G$-spectra in the pair $C$ have trivial derived geometric $K$ fixed
point spectra, for example, by~\cite[V.4.8.(ii), V.4.8.12]{MM}.  It
follows that cell attachment by $C$ does not change derived geometric
$K$-fixed points, and that the map $\bS\to A$ induces an equivalence
on derived geometric $K$-fixed points.
\end{proof}

Given the discussion of geometric fixed points above, we would expect
that for $K$ finite,
\[
(R\otimes G)^{\Phi K} \simeq R\otimes (K\bs G)
\]
as an $NK/K$-spectrum. The analogous formula
\[
(N_{e}^{G}X)^{\Phi K} \simeq \int_{K\bs G} X
\]
does not always make sense because $K\bs G$ is not always
parallelizable: $K\bs G$ inherits a parallelization from $G$ if and
only if the $\Ad$ action of $K$ on the tangent space of $G$ at the
identity $T_{e}G$ is trivial.  In this case, we make the following conjecture.

\begin{conj}
Let $K<G$ be a finite subgroup and assume the $\Ad$ action of $K$ on
$T_{e}G$ is trivial.  For composite of derived functors
$\Phi^{K}N_{e}^{G}$, there is a natural weak equivalence in the
non-equivariant stable category 
\[
(N_{e}^{G}X)^{\Phi K}\simeq \int_{K\bs G}X.
\]
\end{conj}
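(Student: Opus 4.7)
The plan is to combine the Ayala--Francis cardinality filtration on non-unital factorization homology with the expected [PMI] formula for $\Phi^{K}$ of $I_{U^{G}}^{U}(\Conf(q,G)^{+}\sma_{\Sigma_{q}}X^{(q)})$ discussed in the paragraph preceding the conjecture. First I would invoke the homotopy pushout square relating the unital and non-unital constructions displayed earlier. Applying $I_{U^{G}}^{U}$ and then $\Phi^{K}$ preserves homotopy pushouts, and there is an analogous square for $K\bs G$ in place of $G$ (using the parallelization of $K\bs G$ provided by the $\Ad$ hypothesis). We are thus reduced to producing a natural weak equivalence
\[
\Bigl(I_{U^{G}}^{U}\textstyle\int^{nu}_{G}X\Bigr)^{\Phi K}\simeq \int^{nu}_{K\bs G}X
\]
compatible with the unit $\bS\to X$; for $X=\bS$ this yields the bottom row comparison $(I_{U^{G}}^{U}\int_{G}\bS)^{\Phi K}\simeq \bS\simeq \int_{K\bs G}\bS$, since $\int_{M}\bS$ is $G$-equivariantly equivalent to the sphere spectrum with trivial action.

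Next I would filter both sides by the cardinality filtration; in [CFH] this gives a point-set filtration which after point-set change of universe becomes $G$-equivariant, with associated graded in level $q$ equal to $\Conf^{\epi}(q,-)^{+}\sma_{\Sigma_{q}}X^{(q)}$. The key computation is then on each filtration subquotient. Combining the expected [PMI] formula with the canonical identification $\Conf(q,G)^{K}\iso \Conf(q/\#K, K\bs G)$ (valid when $\#K$ divides $q$ and empty otherwise) and the observation that the $\epi$ condition passes through this identification, one obtains
\[
\bigl(I_{U^{G}}^{U}(\Conf^{\epi}(q,G)^{+}\sma_{\Sigma_{q}}X^{(q)})\bigr)^{\Phi K}
\simeq \Conf^{\epi}(p, K\bs G)^{+}\sma_{\Sigma_{p}}X^{(p)}
\]
when $p=q/\#K$ is an integer, and a contractible spectrum otherwise. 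This exactly matches the associated graded of the cardinality filtration of $\int^{nu}_{K\bs G}X$; the $\Ad$ hypothesis is what ensures that the parallelization inherited by $K\bs G$ from $G$ is compatible with the framings used by $\int$. An induction up the filtration (whose stages are Hurewicz cofibrations) then promotes the subquotient equivalences to an equivalence of total spectra, and the homotopy pushout step completes the proof.

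The main obstacle is the naturality needed to promote the subquotient identifications into an honest comparison of filtered spectra. One needs a [PMI]-style geometric fixed point formula for $I_{U^{G}}^{U}(\Conf(q,G)^{+}\sma_{\Sigma_{q}}X^{(q)})$ that is natural in configurations and framing data in a form strong enough to track the structure maps between successive filtration degrees and the compatibility of parallelizations between $G$ and $K\bs G$. This is precisely the type of naturality that the paper flags as currently only conjectural and is the source of the decision to label the statement as a conjecture.
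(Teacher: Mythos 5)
The statement you are asked about is labeled a \textbf{conjecture} in the paper and comes with no proof; the paper only sketches the very strategy you outline (the unital/non-unital homotopy pushout, the Ayala--Francis cardinality filtration, the identification $\Conf(q,G)^{K}\iso\Conf(q/\#K,K\bs G)$, and the expected [PMI] formula for the geometric $K$-fixed points of $I_{U^{G}}^{U}(\Conf(q,G)^{+}\sma_{\Sigma_{q}}X^{(q)})$, with the $\Ad$-triviality hypothesis supplying the parallelization of $K\bs G$). Your proposal therefore takes essentially the same approach as the paper's intended argument, and you correctly identify that it remains conditional on the naturality of the unpublished [CFH]/[PMI] inputs, which is precisely why the paper records the statement as a conjecture rather than a theorem.
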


If $K$ is normal, then the $\Ad$ action is automatically trivial, and
we can identify the righthand side as $N_{e}^{G/K} X$; in this case, we
further conjecture that the above weak equivalence refines to a weak
equivalence in the stable category of genuine $G/K$-spectra.  When
$K\bs G$ is not parallelizable, a $\oD^{d}$-algebra does not have
enough structure to make sense of $\int_{K\bs G}$. Instead, the weak
equivalence should take the following form.

\begin{conj}
Let $K<G$ be finite.  There is a natural non-equivariant weak equivalence
\[
(N_{e}^{G}X)^{\Phi K}\simeq 
\int_{N\bs G}\int_{K\bs N\times T_{e}(N\bs G)}X
\]
where $N$ is the normalizer of $K$ in $G$.
\end{conj}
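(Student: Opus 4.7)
The strategy is to factor the argument through the manifold $K\bs G$: we first identify $(N_{e}^{G}X)^{\Phi K}$ with the non-equivariant factorization homology $\int_{K\bs G}X$, and then decompose the latter along the principal $N/K$-bundle $\pi\colon K\bs G\to N\bs G$ to obtain the iterated formula on the right-hand side. Since $K$ acts freely on $G$ by left translation, $K\bs G$ is a smooth $d$-manifold; the left-invariant parallelization of $G$ is preserved by left $K$-translation and so descends to $K\bs G$, making $\int_{K\bs G}X$ meaningful non-equivariantly.

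For the first step, one imitates the cardinality-filtration arguments preceding Theorem~\ref{thm:hi}. The associated graded pieces of $N_{e}^{G}X=I_{U^{G}}^{U}\int_{G}X$ have the form $I_{U^{G}}^{U}(\Conf^{\epi}(q,G)^{+}\sma_{\Sigma_{q}}X^{(q)})$. Because left $K$-translation is free, $\Conf^{\epi}(q,G)^{K}$ is empty unless $|K|$ divides $q$, and for $q=p|K|$ the orbit map identifies it with $\Conf^{\epi}(p,K\bs G)^{+}$. At every $K$-fixed configuration the induced homomorphism $K\to\Sigma_{q}$ factors as the diagonal $K\hookrightarrow K^{p}$ followed by the block embedding $K^{p}\hookrightarrow\Sigma_{q}$ in which each $K$-factor acts regularly on a block of $|K|$ smash factors of $X^{(q)}$. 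Applying the HHR diagonal $(X^{(|K|)})^{\Phi K}\simeq X$ block-wise, together with the analysis of $\Phi^{K}$ of $I_{U^{G}}^{U}(A\sma_{\Sigma_{q}}X^{(q)})$ developed in [PMI], should yield a non-equivariant weak equivalence
\[
\bigl(I_{U^{G}}^{U}(\Conf^{\epi}(q,G)^{+}\sma_{\Sigma_{q}}X^{(q)})\bigr)^{\Phi K}\simeq \Conf^{\epi}(p,K\bs G)^{+}\sma_{\Sigma_{p}}X^{(p)}.
\]
Passing to the colimit along the cardinality filtration and comparing with the unital/non-unital homotopy pushout square (using the base case $(N_{e}^{G}\bS)^{\Phi K}\simeq\bS$) then gives $(N_{e}^{G}X)^{\Phi K}\simeq\int_{K\bs G}X$.

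For the second step, the parallelization of $K\bs G$ splits along $\pi$ as $\pi^{*}T(N\bs G)\oplus T_{v}$, where the vertical bundle $T_{v}$ is trivialized by $\mathfrak{n}$ using the Lie group structure on each fiber $K\bs N\iso N/K$. A trivializing chart over a small disk $U\subset N\bs G$ identifies $\pi^{-1}(U)\iso K\bs N\times U$ compatibly with parallelizations, and shrinking to the infinitesimal level identifies the local model for factorization homology as $\int_{K\bs N\times T_{e}(N\bs G)}X$, which carries a canonical $\oD^{d-\dim\mathfrak{n}}$-algebra structure coming from the Euclidean factor $T_{e}(N\bs G)$ via Dunn's theorem. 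A Fubini/pushforward statement for factorization homology along parallelized fiber bundles (fitting the framework of [CFH]) then gives
\[
\int_{K\bs G}X\simeq \int_{N\bs G}\int_{K\bs N\times T_{e}(N\bs G)}X,
\]
and combining the two steps yields the conjecture.

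The main obstacle is the first step. Its proof requires a careful analysis of the interaction between the block-regular $K$-action on the smash factors of $X^{(q)}$, the nontrivial $K$-representation $\mathfrak{g}/\mathfrak{n}$ transverse to a $K$-orbit in $G$ (entering through the indexing on $U$), and the HHR-style diagonal for smash powers---exactly the compact Lie group generalization of~\cite[B.104,B.146]{HHR} that [PMI] is designed to produce. The Fubini in the second step is conceptually more straightforward, but also relies on the covariance properties of the factorization homology functor constructed in [CFH].
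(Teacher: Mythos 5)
This statement is a conjecture in the paper: no proof is given, only an interpretation of the right-hand side, so the question is whether your plan is consistent with that interpretation and whether its steps are sound. The most concrete gap is in your second step. The paper is explicit that $N\bs G$ is not in general parallelizable (e.g.\ $G=SO(3)$, $K$ generated by a half-turn, $N\iso O(2)$, $N\bs G$ non-orientable); the outer $\int_{N\bs G}$ is meant to be the $H$-framed factorization homology of Section~\ref{sec:fhne} for the tangential $N,T_{e}(N\bs G)$-structure with frame bundle $G\to N\bs G$ as in Example~\ref{ex:GH}, and its input must be a $\oD^{T_{e}(N\bs G)}_{N}$-algebra. Dunn additivity applied to the Euclidean factor produces only the underlying nonequivariant $E_{d-\dim N}$-structure on $\int_{K\bs N\times T_{e}(N\bs G)}X$; what is required is the $N$-equivariant refinement coming from the diagonal $N$-action on the manifold $K\bs N\times D(T_{e}(N\bs G))$, i.e.\ the analogue of the proposition in Section~\ref{sec:rel} giving $\bar B(H/K\times D(T_{eH}(G/H));X)$ the structure of a $\oD^{T_{eH}(G/H)}_{H}$-algebra via the decomposition~\eqref{eq:Tdecomp}. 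Without that structure your outer integral is undefined. Your claim that the parallelization of $K\bs G$ splits as $\pi^{*}T(N\bs G)\oplus T_{v}$ with both summands trivialized is exactly where the twisting hides: in left-invariant coordinates the vertical subspace of $T_{g}G$ for $G\to N\bs G$ is $\Ad(g^{-1})\mathfrak{n}$, which is not constant, so there is no global splitting into trivial bundles and no ``parallelized fiber bundle'' Fubini to invoke.

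Your first step is also problematic, for a different reason: it is verbatim the paper's \emph{preceding} conjecture, which the paper asserts only under the hypothesis that the $\Ad$-action of $K$ on $T_{e}G$ is trivial, stating flatly that $\int_{K\bs G}X$ ``does not always make sense'' otherwise; the normalizer appears in the present conjecture precisely to repair this. Your unconditional assertion that the left-invariant parallelization descends to the left quotient $K\bs G$ is therefore in direct tension with the premise of the statement you are proving --- if it held in the form you use it, the conjecture would collapse to the previous one plus a Fubini and the appearance of $N$ would be unmotivated. At a minimum you must reconcile your descent claim with the paper's (being precise about which of left/right translation and left/right-invariant frames is in force at each stage, since the $\Ad$-twist $DR_{k}^{-1}DL_{k}=\Ad(k)$ is exactly the discrepancy between the two conventions), and then verify that the framing you place on $K\bs G$ is the one that matches the disk-level tubular-neighborhood structure produced by $\Phi^{K}$ of the bar construction, not merely the identification of fixed configuration spaces. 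Until that is settled, the reduction to $\int_{K\bs G}X$ is not available, and the plan does not engage with the phenomenon the normalizer formulation is designed to capture.
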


Interpreting the formula in the preceding conjecture takes some work.
The manifold $N\bs G$ is not generally parallelizable but its tangent
bundle admits a canonical reduction of structure to $N$ for the
$N$-representation given by the action on $T_{e}(N\bs G)$;
factorization homology $\int_{N\bs G}$ admits as input $N$-framed
little $T_{e}(N\bs G)$-disk algebras (see
Definition~\ref{defn:lilVdisk}).  The inner factorization homology
$\int_{K\bs N\times T_{e}(N\bs G)}X$ comes with a canonical such
structure.  In the case when $K$ acts trivially on $T_{e}G$, $N$
contains the identity component $G_{e}$, and $\int_{N\bs G}$ is a
finite smash power, so the two conjectures agree.

\section{Factorization homology of $H$-framed manifolds}\label{sec:fhne}

The construction of relative norms $N_H^G$ requires factorization homology for
non-parallelizable manifolds; we use this section to review framings
of smooth manifolds and how the framing fits into factorization
homology. We take a somewhat different approach from most other
treatments of factorization homology in that we work in terms of a
reduction of structure \textit{group} for the tangent bundle rather
than working with reduction of structure on the classifying space
level.  The work in this section is fundamentally non-equivariant,
though it intrinsically involves a structure group $H$ for the
framings.  We construct a point-set factorization homology functor in
this context, which we use in the next section to define relative
norms.

Before starting, we fix the following convention.  The first condition
ensures that the categories we consider are small.  The second
condition, while undesirable in some contexts, is convenient in the
context of factorization homology constructions.

\begin{conv}\label{conv:manifold}
When we use the term \term{manifold}, we will always understand that
its underlying topological space is a subspace of $\bR^{\infty}$ and
that it has finitely many components.
\end{conv}

We begin by fixing some terminology and notation.
For $M$ a smooth $d$-manifold, let $FM\to M$ denote the frame
bundle, a principal $\GL(d)$-bundle: a point consists of an
element $m$ of $M$ and a basis for the tangent space $T_{m}M$.
The frame bundle used $\bR^{d}$ as the standard model vector space,
but in our most important examples, $G$ and $G/H$, using the tangent
space at the identity $T_{e}G$ or at the identity coset $T_{eH}G/H$ is
more natural (choice-free) and so we formulate framings in terms of an 
arbitrary $d$-dimensional
vector space $V$.  A smooth $d$-manifold has a $V$-frame bundle $F_{V}M\to M$,
where a point in $F_{V}M$ consists of an element of $m$ of $M$ and a linear
isomorphism from $V$ to $T_{m}M$.  The $V$-frame bundle has the canonical
structure of a principal $\GL(V)$-bundle and there is a tautological
isomorphism of $\GL(V)$-bundles 
\[
F_{V}M\iso FM\times_{\GL(d)} \Iso(V,\bR^{d}),
\]
where $\Iso(V,\bR^{d})$ denotes the space of linear isomorphisms from
$V$ to $\bR^{d}$.  

\begin{ter}\label{ter:frame}
Let $H$ be a topological group with a given linear action on
$V$, i.e., a homomorphism $\rho \colon H\to \GL(V)$.  A
\term{tangential $H,V$-structure} on a smooth $d$-manifold 
$M$ (or \term{tangential $H$-structure}, when $V$, $\rho$ is
understood) consists of a principal $H$-bundle $P\to M$ and an
isomorphism of principal $\GL(V)$-bundles
\[
\phi \colon P\times_{H}\GL(V)\to F_{V}M.
\]
An \term{$H$-framed manifold} is a smooth $d$-manifold
together with a choice of 
tangential $H$-structure; we use the notation $F_{H}M\to M$ for its
structural principal $H$-bundle written $P\to M$ above and
$\phi_{M}$ for the structural isomorphism written $\phi$ above.
\end{ter}

When $H$ is the trivial group and $V=\bR^{d}$, we use
\term{parallelized} as a synonym for $H$-framed.

There is an obvious definition of maps of $H$-framed manifold in terms
of lifts of derivatives that we call $H$-framed local isometry, but it is
too constrained for many purposes.  The looser definition of
$H$-framed immersion is the right one for factorization homology.

\begin{defn}\label{defn:framemap}
Let $L$ and $M$ be $H$-framed manifolds (for fixed $\rho \colon H\to
\GL(V)$). An $H$-framed immersion $L\to M$ consists of a smooth
immersion (i.e., local diffeomorphism)
$f\colon L\to M$, a map of principal $H$-bundles
\[
Ff\colon F_{H}L\to f^{*}F_{H}M,
\]
and a principal $\GL(V)$-bundle homotopy
\[
If\colon F_{V}L\times I\to f^{*}F_{V}M
\]
such that $If$
ends at the derivative viewed as a map of frame bundles $F_{V}L\to
f^{*}F_{V}M$ and begins at the map 
\[
F_{V}L\overto{\phi_{L}^{-1}} F_{H}L\times_{H}\GL(V)\overto{Ff}f^{*}F_{H}M\times_{H}\GL(V)\overto{f^{*}\phi_{M}} f^{*}FM.
\]
induced by $Ff$.  An $H$-framed embedding is an $H$-framed immersion
whose underlying map is an open embedding (i.e., diffeomorphism onto
an open subset).  An $H$-framed diffeomorphism is a surjective
$H$-framed embedding, or equivalently, an $H$-framed embedding that
has an inverse, which is also an $H$-framed embedding.  An $H$-framed
local isometry is an $H$-framed immersion where the homotopy $If$ is
constant, i.e., the map of principal $H$-bundles $Ff$ is a lift of the
derivative.
\end{defn}

\begin{notn}
Let $\aE_{H}$ (or $\aE_{H,V}$ when specifying $V$ is needed for
clarity) denote the category whose objects are the $H$-framed 
manifolds and whose maps are the $H$-framed embeddings.  We topologize
the mapping spaces using the (k-ification of) the smooth compact open
topology on the space of smooth embeddings.
\end{notn}

When $V$ admits an $H$-invariant inner product, $H$-framed local
isometries are true local isometries for the resulting Riemmannian
structure; if not, the terminology fits less well.

\begin{example}
Let $V=\bR^{d}$, $H=O(d)$, with $\rho$ the standard inclusion. An
$H$-framing on a $d$-manifold $M$ then consists of a continuous (in
$m\in M$) choice of the orthogonal frames in $T_{m}M$, and so is
equivalent to the choice of a continuous Riemmannian metric on $M$.
Given $H$-framed manifolds $L$,$M$, an $H$-framed local isometry from
$L$ to $M$ consists of a smooth map $f\colon L\to M$ such that the
derivative at each point sends orthogonal frames to orthogonal frames,
i.e., it is precisely a local isometry in the usual sense.  An
$H$-framed immersion consists of a smooth immersion $f\colon L\to M$
and a $\GL(d)$-equivariant homotopy $If_{x}\colon FL_{x}\to
FM_{f(x)}$, continuous in $x\in L$, that starts at a map that takes
orthogonal frames to orthogonal frames and ends at $Df_{x}$.  Any
other such homotopy $I'f_{x}$ can be obtained by pointwise
multiplication by a path in $\GL(d)$ that starts at an element of
$O(d)$ and ends at the identity, that is to say, an element of the
homotopy fiber of $\rho$.  It follows that for fixed $f$, the space of
lifts of $f$ to an $H$-framed immersion is the space of sections of a
locally trivial principal $\hofib(\rho)$-bundle.  Since $\hofib(\rho)$
is contractible, so is this space of sections.
\end{example}

The work of the previous section implicitly used the following example.

\begin{example}\label{ex:G}
A Lie group $G$ has two canonical $H$-framings for $H$ the trivial
group and $V=T_{e}G$ the tangent space at the identity: one given by
left-invariant vector fields and the other given by right-invariant
vector fields. Choosing an isomorphism $\bR^{d}\iso T_{e}G$ gives $G$
a parallelization.  Consider the left-invariant framing or
parallelization.  For this framing, left multiplication by an element
of $G$ is an $H$-framed local isometry; moreover, it
is easy to see that when $G$ is connected, all $H$-framed local
isometries are of this form.  The inclusion of $G$ into the space of 
$H$-framed local isometries and into the space of $H$-framed
embeddings is continuous. 
\end{example}

We will always use the convention of left invariant vector fields. The
following generalization will form the basis for the relative norm.

\begin{example}\label{ex:GH}
Let $G$ be a Lie group and $H<G$ a closed subgroup. The orbit space
$G/H$ has a canonical $H,T_{eH}G/H$-tangential structure with
$H$-frame bundle the quotient map $G\to G/H$: given $g\in
G$, the derivative $DL_{g}|_{eH}$ of left multiplication by $g$ gives
an isomorphism of $T_{eH}(G/H)$ with the tangent space of $gH$.  Left
multiplication by elements of $G$ give $H$-framed local isometries.
\end{example}

Another important example is the vector space $V$ itself.

\begin{example}\label{ex:V}
The vector space $V$ viewed as an $d$-manifold has the canonical
structure of an $H$-framed manifold: the canonical identification of $V$ with
the tangent space $T_{v}V$ at every point $v\in V$ induces a
splitting of the $V$-frame bundle 
\[
F_{V}V\iso V\times \GL(V)
\]
and we use the split $H$-frame bundle 
\[
F_{H}V := V\times H
\]
with the map $F_{H}V\to F_{V}V$ induced by $\rho\colon  H\to \GL(V)$.  For an
$H$-framed manifold $M$ and an open embedding $f\colon V\to M$, a map
of $H$-principal bundles $F_{H}V\to f^{*}F_{H}M$ is determined by the
section
\[
V\iso V\times \{e\}\subset V\times H=F_{H}V\to f^{*}F_{H}M
\]
and an arbitrary section $V\to f^{*}F_{H}M$ induces a map of
$H$-principal bundles $F_{H}V\to f^{*}F_{H}M$.  Similar observations
apply to homotopies of maps of principal $\GL(V)$-bundles $F_{V}V\to
f^{*}F_{V}M$.  It follows that a lift of $f$ to an $H$-framed
embedding determines and is determined by a section $s$ of
$f^{*}F_{H}M$ and a homotopy over $V$ starting from the composite of
$s$ with the map $f^{*}F_{H}M\to F_{V}M$ induced by $\rho$ and ending
at the derivative, viewed as a map $V\to f^{*}F_{H}M$.  Similar
remarks apply to maps out of any $d$-manifold that has a given
splitting of its $V$-frame bundle.
\end{example}

\begin{conv}
For the rest of the section, we now fix the vector space $V$, the topological
group $H$, which we assume to be a Lie group, the homomorphism $\rho \colon H\to
\GL(V)$, and an $H$-invariant inner product structure on $V$, which we
assume exists.  (This in particular factors $\rho$ through $O(V)$, the
linear isometries of $V$.)
\end{conv}

In this context, factorization homology is built from the space of $H$-framed
embeddings of copies of $V$ in $H$-framed manifolds.  In the case
where the target is $V$ itself, the $H$-framed little disks operad
gives a small rigid model.

\begin{defn}\label{defn:lilVdisk}
Write $D$ for the open unit disk in $V$.  The
\term{$H$-framed little $V$-disk operad} $\oD^{V}_{H}$ has $n$th space
defined as follows: $\oD^{V}_{H}(0)$ consists of a single point.  An
element of $\oD^{V}_{H}(1)$ consists of a ordered pair $(\lambda,h)$
where $h\in H$ and $\lambda$ is a affine transformation $\lambda
\colon V\to V$ of the form
\[
\lambda(v)=v_{0}+rhv
\]
for some $v_{0}\in D$, $r\in (0,1]$ (and the given element $h\in H$)
that takes $D$ into $D$; 
$(\lambda,h)$ is then completely determined by $v_{0}$, $r$, $h$,
and we topologize $\oD^{V}_{H}(1)$ as a subspace of $D\times
(0,1]\times H$. For $n>1$, $\oD^{V}_{H}(n)$ is the subspace of
$\oD^{V}_{H}(1)\times \dotsb \times \oD^{V}_{H}(1)$ where the images of
$D$ under the affine transformations are disjoint.
The identity affine transformation and identity element of $H$ give an
identity element in $\oD^{V}_{H}(1)$.  We have a
right action of the symmetric group $\Sigma_{n}$ on $\oD(n)$ by
permuting the copies of $\oD^{V}_{H}(1)$, and we have an operadic
composition defined by composing maps and multiplying group elements:
the composition
\[
\oD^{V}_{H}(n)\times \oD^{V}_{H}(j_{1})\times \dotsb \times
\oD^{V}_{H}(j_{n})\to \oD^{V}_{H}(j)
\]
(for $j=j_{1}+\dotsb+j_{n}$)
is defined by
\[
\left(
\begin{gathered}
((\lambda_{1},h_{1}),\dotsc,(\lambda_{n},h_{n})),\\
((\mu_{1,1},g_{1,1}),\dotsc,(\mu_{1,j_{1}},g_{1,j_{1}})),\\
\dotsc,\\
((\mu_{n,1},g_{n,1}),\dotsc,(\mu_{n,j_{n}},g_{n,j_{n}}))
\end{gathered}\right)
\mapsto
\left(
\begin{gathered}
\shoveleft{(\lambda_{1}\circ \mu_{1,1},h_{1}g_{1,1}),\dotsc,}\\
\qquad\qquad (\lambda_{1}\circ \mu_{1,j_{1}},h_{1}g_{1,j_{1}}),
\\
\shoveleft{\quad\dotsc,}\\
\shoveleft{(\lambda_{n}\circ \mu_{n,1},h_{n}g_{n,1}),\dotsc,}\\
\qquad\qquad (\lambda_{n}\circ \mu_{n,j_{n}},h_{n}g_{n,j_{n}})
\end{gathered}\right).
\]
We emphasize that the operad $\oD^{V}_{H}$ depends on the action
$\rho$ of $H$ on $V$ and not just the abstract topological group $H$
and vector space $V$.
\end{defn}

\begin{rem}
When the kernel of $\rho \colon H \to O(V)$ is trivial, the element
$h$ of $H$ in the ordered pair in the definition of $\aD^{V}_{H}(1)$
(and $\aD^{V}_{H}(n)$) is completely determined by the affine
transformation $\lambda$ and we can more concisely define
$\oD^{V}_{H}(1)$ as the space of affine transformations
$\lambda \colon V\to V$ of the form $x\mapsto v_{0}+rhv$ that send the
unit disk into the unit disk ($\oD^{V}_{H}$ remains the subspace of
$\oD^{V}_{H}(1)^{n}$ where the images of the $D$ under the
affine transformations are disjoint).
\end{rem}

We contrast the $H$-framed little $V$-disk operad with the following
\term{$H$-equivariant little $V$-disk operad} often used in
equivariant factorization homology. 

\begin{defn}\label{defn:eqld}
The \term{$H$-equivariant little $V$-disk operad} $\oD_{V}$ has $n$th space
$\oD_{V}(n)$ the space of those ordered $n$-tuples of affine
transformations of the form
\[
v\mapsto v_{0}+rv
\]
that send the closed unit disk into the closed unit disk where (for
$n>1$) the images overlap only possibly on the boundary.  The identity
element is the identity map in $\oD_{V}(1)$ and the operadic
composition is induced by composing affine transformations.
The action of $H$ on $V$ induces an action of $H$ on the embedding
spaces by conjugation: $(h\cdot \lambda)(v)=h\lambda(h^{-1}v)$.  The
identity element $\id \in \oD_{V}(1)$ is fixed under this action, and
the operadic composition maps 
\[
\oD_{V}(n)\times \oD_{V}(j_{1}) \times \dotsb \times \oD_{V}(j_{n})\to \oD_{V}(j)
\]
(for $j=j_{1}+\dotsb+j_{n}$)
are $H$-equivariant, making $\oD_{V}$ an operad in the category of $H$-spaces.
\end{defn}

Non-equivariantly $\oD_{V}$ is $\oD^{V}_{e}$ where $e$ denotes the
trivial group, but $\oD_{V}$ and $\oD^{V}_{H}$ are related
equivariantly as follows.  Let $\oH$ be the operad with $\oH(n)=H^{n}$, where
composition is induced by diagonal maps and the group multiplication:
\[
\left(
\begin{gathered}
(h_{1},\dotsc,h_{n}),\\
(k_{1,1},\dotsc,k_{1,j_{1}}),\\
\dotsc,\\
(k_{n,1},\dotsc,k_{n,j_{n}}
\end{gathered}\right)
\mapsto
\left(\begin{gathered}
h_{1}k_{1,1},\dotsc,h_{1}k_{1,j_{1}}\\
\dotsc,\\
h_{n}k_{1,j_{n}},\dotsc,h_{n}k_{n,j_{n}}
\end{gathered}
\right).
\]
The $H$-framed little $V$-disks operad
$\oD^{V}_{H}$ is then isomorphic to the semidirect product
$\oD_{V}\rtimes \oH$ of the $H$-equivariant little $V$-disks operad
$\oD_{V}$ and the operad $\oH(n)=H^{n}$, with composition
on the $\oD_{V}$ factor twisted by the action of $H$ on $\oD(j_{i})$:
the isomorphism $\oD_{V}\rtimes \oH\to \oD^{V}_{H}$ is given on the
$n$-ary part by the formula
\begin{multline}\label{eq:FtoE}
((\lambda_{1},\dotsc,\lambda_{n}),(h_{1},\dotsc,h_{n}))
\in \oD_{V}(n)\times H^{n}=(\oD_{V}\rtimes \oH)(n)\\
\mapsto
((\lambda_{1}\circ h_{1},h_{1}),\dotsc,(\lambda_{n}\circ
h_{n},h_{n}))\in \oD^{V}_{H}(n).
\end{multline}
The relationship between the operads is given by the following proposition, which is
purely formal and holds in any suitable topological category (see, for
example, \cite[2.3]{SalvatoreWahl}). In it, we understand an
$H$-equivariant $\oD_{V}$-algebra to be an object $X$ with an action
of both the operad $\oD_{V}$ and the topological group $H$ such that
the algebra structure maps are $H$-equivariant.

\begin{prop}\label{prop:ldalg}
The point-set category of $H$-equivariant $\oD_{V}$-algebras is isomorphic to
the point-set category of $\oD^{V}_{H}$-algebras.
\end{prop}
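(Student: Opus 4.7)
The plan is to invoke the explicit isomorphism of operads $\oD_{V} \rtimes \oH \cong \oD^{V}_{H}$ exhibited in~\eqref{eq:FtoE}, reducing the claim to the standard fact that algebras over a semidirect product operad $\oO \rtimes G$ are exactly $G$-equivariant $\oO$-algebras (see \cite[2.3]{SalvatoreWahl}). Since the paper observes that this is purely formal, the task is really just to unwind the definitions in our setting and check functoriality.

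To construct a functor from $\oD^{V}_{H}$-algebras to $H$-equivariant $\oD_{V}$-algebras, I would transport structure along the isomorphism~\eqref{eq:FtoE} and then restrict along two canonical maps into $\oD_{V} \rtimes \oH$. The first is the suboperad inclusion $\oD_{V} \hookrightarrow \oD_{V} \rtimes \oH$, $\lambda \mapsto (\lambda,(e,\ldots,e))$, producing a $\oD_{V}$-algebra structure on $X$. The second is the inclusion of topological groups $H \hookrightarrow (\oD_{V} \rtimes \oH)(1) = \oD_{V}(1) \times H$, $h \mapsto (\id_{V},h)$, which is a group homomorphism into the unary operations by inspection of the composition, and so endows $X$ with a continuous $H$-action. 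Equivariance of the $\oD_{V}$-action amounts to the identity $(\id_{V},h) \circ (\lambda,(e,\ldots,e)) = (h \cdot \lambda,(h,\ldots,h))$ in $\oD_{V} \rtimes \oH$, which follows by a direct calculation from the composition formula in $\oD^{V}_{H}$ (Definition~\ref{defn:lilVdisk}) together with~\eqref{eq:FtoE}.

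In the reverse direction, given an $H$-equivariant $\oD_{V}$-algebra $X$, I would define a $(\oD_{V} \rtimes \oH)$-algebra structure on $X$ by the evident formula: the element $(\lambda,(h_{1},\ldots,h_{n})) \in \oD_{V}(n) \times H^{n}$ acts on $(x_{1},\ldots,x_{n}) \in X^{n}$ by $\lambda \cdot (h_{1} \cdot x_{1},\ldots,h_{n} \cdot x_{n})$. Transporting via~\eqref{eq:FtoE} then yields an $\oD^{V}_{H}$-algebra structure. The two constructions are mutually inverse, continuous, and functorial by inspection. The one check that requires any care — which is really just bookkeeping — is that the reverse construction respects operadic composition; this reduces to matching the twisted composition built into $\oD_{V} \rtimes \oH$ (whose $\oD_{V}$-coordinate is twisted by the action of $H$ on $\oD_{V}$) with the composition of the $\oD_{V}$-action and $H$-action on $X$, and it follows directly from the $H$-equivariance hypothesis on the $\oD_{V}$-structure maps of $X$.
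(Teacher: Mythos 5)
Your proposal is correct and follows the same route the paper intends: the paper gives no separate argument, relying on the identification $\oD_{V}\rtimes\oH\iso\oD^{V}_{H}$ of~\eqref{eq:FtoE} and the standard formal fact about algebras over semidirect product operads cited from \cite[2.3]{SalvatoreWahl}. Your unwinding of that fact (restriction along $\oD_{V}\hookrightarrow\oD_{V}\rtimes\oH$ and $H\hookrightarrow(\oD_{V}\rtimes\oH)(1)$, the equivariance identity, and the inverse construction twisted by the $H$-action) is exactly the intended verification and is accurate.
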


The difference between the two categories is then purely structural or
philosophical.  We use $\oD^{V}_{H}$ when we need to work in a
non-equivariant context and $\oD_{V}$ when we need to work in an
equivariant context.  In terms of homotopy categories or $\infty$-categories, viewing
$\oD^{V}_{H}$ merely as an operad, the natural notion of weak
equivalence on $\oD^{V}_{H}$-algebras would correspond to Borel
equivalence of $H$-equivariant $\oD_{V}$-algebras, which is never what
we want here.  As a category of $H$-equivariant orthogonal spectra
with extra structure, the category of $H$-equivariant
$\oD_{V}$-algebras, we have $U$-universe homotopy theories for all
$H$-universes. 

As indicated above its definition, the $H$-framed little $V$-disk operad
models $H$-framed embeddings of copies of $V$ into $V$.  To make this
precise, we first note that the open disk $D$ is $H$-framed
diffeomorphic to $V$ where we used the canonical splitting of the
$V$-frame bundle of $D$ to define the $H$-framed structure.  Choosing
and fixing an $H$-framed diffeomorphism, it suffices to discuss $H$-framed
embeddings of copies of $D$.  We have a continuous map
\[
\oD^{V}_{H}(n)\to \aE_{H}(D\times \{1,\dotsc,n\},D)
\]
defined as follows.  We can specify the $H$-framed structure on a
smooth map as in Example~\ref{ex:V}. For an element $(\lambda,h)$ of
$\oD^{V}_{H}(1)$, the underlying smooth map $D\to D$ is $\lambda$, the
element $h$, viewed as a constant section
\[
D\to f^{*}F_{H}D\iso D\times H
\]
induces the lift of frame bundles, and we use $t\mapsto r^{t}\rho (h)$
as the homotopy over $D$
\[
D\times I\to f^{*}F_{V}D\iso D\times \GL(V)
\]
from the image of the lift to the derivative (where $r$ is as in the
notation of Definition~\ref{defn:lilVdisk}, $\lambda(v)=v_{0}+rhv$).
This specifies a continuous map $\oD^{V}_{H}(1)\to \aE_{H}(D\times
\{1\},D)$, and for $n>1$, the element
$((\lambda_{1},h_{1}),\dotsc,(\lambda_{n},h_{n}))$ goes to the
$H$-framed map $D\times \{1,\dotsc,n\}\to D$ that does the lift of
$(\lambda_{i},h_{i})$ just described on the $i$th copy.
Taken together, the collection $\aE_{H}(D\times \{1,\dotsc,n\},D)$
(as $n$ varies) forms an operad, a version of the endomorphism operad
$\oEnd_{\amalg}(D)$ in $\aE_{H}$ (for the symmetric monoidal product
given by disjoint union). The following observation is clear by
construction. 

\begin{prop}
The map $\oD^{V}_{H}(n)\to \aE_{H}(D\times \{1,\dotsc,n\},D)$ is
compatible with the symmetric group action and composition, giving a
map of operads $\oD^{V}_{H}\to \oEnd_{\amalg}(D)$.
\end{prop}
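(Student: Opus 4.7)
The plan is to check, in turn, that the assignment lands in $H$-framed embeddings, that it respects units, that it respects the symmetric group action, and that it is compatible with operadic composition. The first three are essentially bookkeeping; the substantive point is composition, which is where the particular form $t \mapsto r^{t}\rho(h)$ of the homotopy in the definition of the map pays off.

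For well-definedness, given $(\lambda,h)\in \oD^{V}_{H}(1)$ with $\lambda(v)=v_{0}+rhv$, the derivative at every $v\in D$ is the linear map $r\rho(h)\in \GL(V)$, which equals the $t=1$ endpoint of the prescribed homotopy $t\mapsto r^{t}\rho(h)$.  The $t=0$ endpoint is $\rho(h)$, which is exactly the map $F_{V}D\to \lambda^{*}F_{V}D$ induced by $\rho$ from the constant section $h$ of $\lambda^{*}F_{H}D\iso D\times H$ specifying $Ff$.  So the triple is an $H$-framed embedding, and the $n$-ary case follows by making these choices independently on each (disjoint) component.  The identity element $(\id_{V},e)$ has $r=1$, $v_{0}=0$, and $h=e$, producing constant data equal to the identity in $\oEnd_{\amalg}(D)(1)$.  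The $\Sigma_{n}$-equivariance is immediate because both sides simply permute ordered disjoint summands.

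The operadic composition check is the substantive calculation.  Given $((\lambda_{i},h_{i}))_{i=1}^{n}$ in $\oD^{V}_{H}(n)$ and $((\mu_{i,j},g_{i,j}))_{j=1}^{j_{i}}$ in $\oD^{V}_{H}(j_{i})$ with $\lambda_{i}(v)=v_{0,i}+r_{i}h_{i}v$ and $\mu_{i,j}(v)=w_{i,j}+s_{i,j}g_{i,j}v$, direct substitution gives
\[
(\lambda_{i}\circ \mu_{i,j})(v) = (v_{0,i}+r_{i}h_{i}w_{i,j}) + (r_{i}s_{i,j})(h_{i}g_{i,j})v,
\]
so the operadic composite $(\lambda_{i}\circ \mu_{i,j},h_{i}g_{i,j})$ is assigned scale parameter $r_{i}s_{i,j}$ and group element $h_{i}g_{i,j}$, and hence the homotopy
\[
t\mapsto (r_{i}s_{i,j})^{t}\rho(h_{i}g_{i,j}) = \bigl(r_{i}^{t}\rho(h_{i})\bigr)\bigl(s_{i,j}^{t}\rho(g_{i,j})\bigr).
\]
Under the convention that composition of $H$-framed embeddings is formed by taking pointwise composition in $\GL(V)$ of their $\GL(V)$-bundle homotopies---which does produce the correct endpoints thanks to the chain rule---this is exactly the homotopy of the composite embedding in $\oEnd_{\amalg}(D)$.

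The main, rather mild, obstacle is fixing the composition convention for the homotopy component of an $H$-framed embedding precisely enough that the identity above holds on the nose rather than only up to canonical reparametrization.  With the pointwise-composition convention in hand, the verification reduces to the elementary identities $(r_{i}s_{i,j})^{t}=r_{i}^{t}s_{i,j}^{t}$ and $\rho(h_{i}g_{i,j})=\rho(h_{i})\rho(g_{i,j})$; it is precisely to secure this compatibility that the exponential homotopy $t\mapsto r^{t}\rho(h)$ was built into the definition of the map in the first place.
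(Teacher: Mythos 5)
Your verification is correct and is exactly the check the paper has in mind (the paper simply declares the proposition ``clear by construction''): the pointwise-composition convention for the $\GL(V)$-bundle homotopies is the one the paper uses (compare the $\ell=0$ subcategory in Section~6), and the identity $(rs)^{t}\rho(hg)=\bigl(r^{t}\rho(h)\bigr)\bigl(s^{t}\rho(g)\bigr)$ is precisely the reason the exponential homotopy was chosen. Nothing further is needed.
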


It well-known and well-documented in the literature that the map
$\oD^{V}_{H}(n)\to \aE_{H}(D\times \{1,\dotsc,n\},D)$ is a homotopy
equivalence.  In fact, we can say more than this.  There is an obvious
inclusion of the wreath product group $\Sigma_{n}\wr H$ in the
$H$-framed self-diffeomorphisms of $D\times \{1,\dotsc,n\}$, where
$\Sigma_{n}$ permutes the factors and elements of $H$ act by the
$H$-isometries on each summand (an element $h$ of $H$ has underlying
smooth map $\rho(h)$, lift specified by the constant section $h$, and
the homotopy over $D$ also constant).  This induces a natural action of
$\Sigma_{n}\wr H$ on $\aE_{H}(D\times \{1,\dotsc,n\},M)$ (for any
$H$-framed manifold $M$), and for $M=D$, it
restricts to a compatible action on $\oD^{V}_{H}(n)$.  In [CFH], we
prove the following theorem about this $\Sigma_{n}\wr H$-action.

\begin{thm}\label{thm:DEcomp}\ 
\begin{enumerate}
\item  For any $H$-framed manifold $M$, the $\Sigma_{n}\wr H$-space
$\aE_{H}(D\times \{1,\dotsc,n\},M)$ is equivariantly homotopy
equivalent to a free $\Sigma_{n}\wr H$-CW complex.
\item The map $\oD^{V}_{H}(n)\to \aE_{H}(D\times \{1,\dotsc,n\},D)$ is a 
$\Sigma_{n}\wr H$-equivariant homotopy equivalence. 
\end{enumerate}
\end{thm}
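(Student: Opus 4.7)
The plan is to exhibit both spaces as $\Sigma_{n}\wr H$-equivariantly homotopy equivalent to a common free $\Sigma_{n}\wr H$-space, a framed ordered configuration space of the $H$-frame bundle. For an $H$-framed manifold $M$, let
\[
\mathrm{Conf}_{n}^{\mathrm{fr}}(M) \subset (F_{H}M)^{n}
\]
be the open subspace of $n$-tuples $(p_{1},\dotsc,p_{n})$ whose images in $M$ are pairwise distinct, equipped with the $\Sigma_{n}$-action by permuting coordinates and the $H^{n}$-action via the principal right action on each fiber of $F_{H}M\to M$. These combine into a $\Sigma_{n}\wr H$-action that is free, because the ordered distinctness of the configurations gives freeness for $\Sigma_{n}$ and the principal bundle structure gives freeness for $H^{n}$. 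Being an open subspace of a smooth manifold carrying a smooth free action of a compact Lie group, $\mathrm{Conf}_{n}^{\mathrm{fr}}(M)$ has the equivariant homotopy type of a free $\Sigma_{n}\wr H$-CW complex, for example by Illman's equivariant triangulation theorem.

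I would then define two evaluation-at-origins maps,
\[
\mathrm{ev}_{M}\colon \aE_{H}(D\times\{1,\dotsc,n\},M)\to \mathrm{Conf}_{n}^{\mathrm{fr}}(M),
\qquad
\mathrm{ev}'\colon \oD^{V}_{H}(n)\to \mathrm{Conf}_{n}^{\mathrm{fr}}(D),
\]
sending an $H$-framed embedding $\psi$ to $\bigl(F\psi|_{(0,i)}(e)\bigr)_{i}$ and sending $((\lambda_{i},h_{i}))_{i}$ to $\bigl((\lambda_{i}(0),h_{i})\bigr)_{i}$ under the identification $F_{H}D=D\times H$ from Example~\ref{ex:V}. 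A direct unwinding shows that both maps are $\Sigma_{n}\wr H$-equivariant and that the triangle with legs $\mathrm{ev}'$, $\mathrm{ev}_{D}$, and the canonical map $\oD^{V}_{H}(n)\to\aE_{H}(D\times\{1,\dotsc,n\},D)$ of the theorem statement commutes.

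The crux is to show that $\mathrm{ev}_{M}$ and $\mathrm{ev}'$ are $\Sigma_{n}\wr H$-equivariant Hurewicz fibrations with contractible fibers. For $\mathrm{ev}'$ this is elementary: the fiber over a framed configuration is the set of admissible scaling parameters in $(0,\infty)^{n}$, an open star-shaped neighborhood of the origin. For $\mathrm{ev}_{M}$, after restricting to disjoint $H$-framed tubular neighborhoods at the base points $\pi(p_{i})$, the fiber splits as a product over $i$ of the space of $H$-framed self-embeddings of $D$ sending $0$ to $0$ with the principal-$H$-lift value at $0$ pinned to $e$. A radial scaling deformation retracts the underlying smooth embeddings onto their derivatives at $0$, and the residual section and $\GL(V)$-homotopy data then deformation retract, using that $D$ is contractible, onto the free path space based at the frame $\rho(p_{i})$ in $\Iso(V,T_{\pi(p_{i})}M)$, which is contractible. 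Local triviality, hence the Hurewicz fibration property, comes from the parametric tubular neighborhood theorem together with partitions of unity. I expect the main obstacle to be arranging these constructions with enough naturality in $M$ to organize them $\Sigma_{n}\wr H$-equivariantly, and this is precisely the sort of analysis carried out in [CFH].

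Granting the above, both $\mathrm{ev}_{M}$ and $\mathrm{ev}'$ are $\Sigma_{n}\wr H$-equivariant homotopy equivalences. Part~(i) follows at once: $\aE_{H}(D\times\{1,\dotsc,n\},M)$ is equivariantly homotopy equivalent to the free $\Sigma_{n}\wr H$-CW complex $\mathrm{Conf}_{n}^{\mathrm{fr}}(M)$. Part~(ii) follows by applying the $2$-out-of-$3$ property to the commuting triangle.
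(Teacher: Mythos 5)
First, a caveat about the comparison you were asked for: the paper itself contains no proof of this theorem --- it is stated as a result imported from the unpublished manuscript [CFH] (``In [CFH], we prove the following theorem\dots''), and the introduction explicitly flags such statements as morally conjectural here. So there is nothing in the paper to measure your argument against; I can only assess it on its own terms. Your overall strategy --- compare both $\oD^{V}_{H}(n)$ and $\aE_{H}(D\times\{1,\dotsc,n\},M)$ to the framed configuration space of the $H$-frame bundle via evaluation at the centers --- is the standard scanning argument and is almost certainly the intended one; the triangle you write down does commute, and your treatment of $\mathrm{ev}'$ (fiberwise convex sets of admissible radii, an equivariant section) is essentially fine.

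However, the proposal has genuine gaps exactly where the real content lies. The fiber of $\mathrm{ev}_{M}$ is not what you describe: it consists of \emph{all} $H$-framed embeddings of the $n$ disks with pinned center frames, not only those landing in chosen tubular neighborhoods, so it does not literally ``split as a product of $H$-framed self-embeddings of $D$.'' Reducing to that subspace requires a shrinking homotopy (precomposition with little-disk rescalings), and arranging the amount of shrinking to vary continuously with the embedding, compatibly with the $\GL(V)$-homotopy data, and equivariantly --- together with the claimed (equivariant) Hurewicz fibration property --- is precisely the delicate analysis; you assert these and defer them to [CFH], but that is the theorem to be proved. (Note also that if you first establish the statement of part~(i) for both sides, the equivariant Whitehead theorem for free actions of the compact Lie group $\Sigma_{n}\wr H$ lets you get by with a nonequivariant equivalence, so you never need the fibration or the contractions to be equivariant; your argument as structured is partly circular on this point, since part~(i) is deduced from the equivariant equivalence.) Finally, in part~(i) you invoke Illman's triangulation theorem for a smooth free action on $\mathrm{Conf}^{\mathrm{fr}}_{n}(M)$, but Terminology~\ref{ter:frame} only provides $F_{H}M$ as a topological principal $H$-bundle, with no smooth structure, and the standing Convention assumes $H$ is a Lie group, not that it (hence $\Sigma_{n}\wr H$) is compact; one should instead argue via $G$-ANR/$G$-CW approximation or first replace $F_{H}M$ by a smooth model. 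These are repairable, but as written the crucial continuity and equivariance steps are asserted rather than proved.
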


We construct factorization homology as a homotopy coend for the (left) action
of the operad $\oD^{V}_{H}$ on an spectrum and the right action of
$\oD^{V}_{H}$ on the following embedding spaces.

\begin{notn}
For an $H$-framed manifold $M$, let $\oE_{M}(n)=\aE_{H}(D\times
\{1,\dotsc,n\},M)$. 
\end{notn}

We have a map
\[
\oE_{M}(n)\times (\oD^{V}_{H}(j_{1},1)\times \dotsb \times \oD^{V}_{H}(j_{n},1))\to \oE_{M}(j)
\]
(for $j=j_{1}+\dotsb+j_{n}$) obtained by composing $H$-framed
embeddings. The collection $\oE_{M}(n)$ forms a symmetric sequence
(which is just to say that each $\oE_{M}(n)$ comes with an action of
$\Sigma_{n}$), and the map above can be re-interpreted in the
category of symmetric sequences as a right action of $\oD^{V}_{H}$ on
$\oE_{M}$ for the plethysm product.  This is simpler to explain in
terms of associated functors: consider the endofunctors $\bE_{M}$ and
$\bD$ on orthogonal spectra defined by
\[
\bE_{M}X=\bigvee_{n\geq 0}\oE_{M}(n)_{+}\sma_{\Sigma_{n}}X^{(n)},\qquad 
\bD X=\bigvee_{n\geq 0}\oD^{V}_{H}(n)_{+}\sma_{\Sigma_{n}}X^{(n)}.
\]
Then $\bD$ is the monad associated to the operad $\oD^{V}_{H}$, and 
the composition maps above define a right action of $\bD$ on $\bE_{M}$
\[
\bE_{M}\circ \bD\to \bE_{M}
\]
in the category of endofunctors of orthogonal spectra (in terms of
composition).  In this setting, we have the monadic bar construction
of May~\cite[\S 9]{GILS}:

\begin{cons}\label{cons:monadic}
Let $M$ be an $H$-framed manifold and let $A$ be a
$\oD^{V}_{H}$-algebra in the category of orthogonal spectra.  Define
the simplicial object $B\subdot(M;A)$ to be the monadic bar
construction $B\subdot(\bE_{M},\bD,A)$:
\[
B_{q}(M;A):= B_{q}(\bE_{M},\bD,A)
  =\bE_{M}\underbrace{\bD\dotsb\bD}_{n\text{ factors}} A
\]
where the face maps $d_{i}$ are induced by the monadic composition $\bD\bD\to
\bD$ (for $0<i<q$), the action of $\oD^{V}_{H}$ on $A$, $\bD A\to A$
(for $i=0$), and the right action of $\oD^{V}_{H}$ on $\oE_{M}$,
$\bE_{M}\bD\to \bE_{M}$ (for $i=q$).  The degeneracy maps $s_{i}$ are
induced by the monadic unit maps $\Id\to \bD$.  We write $B(M;A)$ for the
geometric realization, or $B_{H}(M;A)$ when it is necessary to
denote or emphasize the structure group $H$
\end{cons}

The previous construction $B(M;A)$ is a standard formulation of
factorization homology $\int_{M}A$ in the context of $H$-framed
manifolds, at least under some cofibrancy hypotheses on $A$; see, for
example, \cite[IX.1.5]{Andrade-Thesis}, \cite[5.5.2.6]{Lurie-HA},
\cite[\S2.3]{Miller-Nonabelian}, \cite[Def.~35]{KupersMiller-EnCells},
\cite[3.14]{Zou-EqFactHom}.  Indeed,
because $\oE_{M}(n)$ and $\oD^{V}_{H}(n)$ are
$\Sigma_{n}$-equivariantly homotopic to free $\Sigma_{n}$CW complexes,
$B(M;-)$ preserves weak equivalences in $\oD^{V}_{H}$-algebras $A$
whose underlying orthogonal spectra are ``flat'' in the sense that
the point-set smash product functor $A\sma(-)$ preserves weak
equivalences.  It even suffices for the underlying orthogonal spectra
just to have the right smash powers in the sense that the map in the
stable category from the derived smash power to the point-set smash
power is a weak equivalence. $B(M;A)$ correctly computes $\int_{M}A$
just under this kind of minimal hypothesis on $A$.

When a topological group $G$ acts on $M$ through $H$-framed
diffeomorphisms, $B(M;A)$ obtains a natural $G$-action.  When $H$ is
the trivial group, this gives the point-set construction of
factorization homology from [CFH] with the properties we
asserted in the previous section.  When $H$ is non-trivial, the
construction is too flabby to have the correct $G$-equivariant
homotopy type.  We correct this with the following ``compressed'' bar
construction. 

\begin{cons}\label{cons:compressed}
Let $\bar \bD$ be the free  $H$-equivariant $\oD_{V}$-algebra monad on
$H$-equivariant orthogonal spectra
\[
\bar \bD X := \bigvee_{n\geq 0}\oD_{V}(n)\times_{\Sigma_{n}}X^{(n)}
\]
and for an $H$-framed manifold $M$, let $\bar
E_{M}$ denote the functor from $H$-equivariant orthogonal spectra to
orthogonal spectra defined by
\[
\bar \bE_{M} X := \bigvee_{n\geq 0}\oE_{M}(n)\times_{\Sigma_{n}\wr H}X^{(n)}.
\]
For a $\oD^{V}_{H}$-algebra $A$ (viewed as an $H$-equivariant
$\oD_{V}$-algebra),  define 
the simplicial object $\bar B\subdot(M;A)$ to be the monadic bar
construction $B\subdot(\bar\bE_{M},\bar\bD,A)$:
\[
\bar B_{q}(M;A):= B_{q}(\bar\bE_{M},\bar\bD,A)
  =\bar\bE_{M}\underbrace{\bar\bD\dotsb\bar\bD}_{n\text{ factors}} A
\]
with the usual face and degeneracy maps (see
Construction~\ref{cons:monadic}). Write $\bar B(M;A)$ for the
geometric realization, or $\bar B_{H}(M;A)$ when it is necessary to
denote or emphasize the structure group $H$.
\end{cons}

In terms of the $H$-framed little $V$-disk operad, the monad $\bar
\bD$ is naturally isomorphic to the monad (on $H$-equivariant
orthogonal spectra)
\[
\bigvee_{n\geq 0}\oD^{V}_{H}(n)_{+}\sma_{\Sigma_{n}\wr H}X^{(n)}
\]
where the resulting $H$-action is the $H$-action from the
$\oD^{V}_{H}$-algebra structure.  In concrete terms, the $H$-action on
the $n$th summand is 
induced by the left $H$-action on $\oD^{V}_{H}(n)$ coming from the
$H$-action on $D$ in the category of $H$-framed manifolds.
(Specifically, $h\in H$ sends
$((\lambda_{1},h_{1}),\dotsc,(\lambda_{n},h_{n}))$ to $((h\circ
\lambda_{1},hh_{1}),\dotsc,(h\circ \lambda_{n},hh_{n}))$.)
Under the isomorphism $\oD^{V}_{H}(n)\iso \oD^{V}(n)\times H^{n}$
of~\eqref{eq:FtoE}, this action corresponds to the diagonal action on 
\[
\oD_{V}(n)_{+}\sma_{\Sigma_{n}}X^{(n)}
\]
used in Construction~\ref{cons:compressed}.  This makes clear the
relationship between $B(M;A)$ and $\bar B(M;A)$: expanding out the
definitions in terms of the spaces $\oE_{M}(n)$ and $\oD^{V}_{H}(n)$,
$B(M;A)$ is formed from products of these smashed with smash powers of
$A$ by coequalizing symmetric group actions and $\bar B(M;A)$ is
formed by the same products and smash powers by coequalizing the
action of the corresponding wreath product with $H$.

The quotient map
\[
B(M;A)\to \bar B(M;A)
\]
is natural in both the $H$-framed manifold $M$ and the
$\oD^{V}_{H}$-algebra $A$, and a straight-forward ``Quillen
Theorem~A'' argument (plus Theorem~\ref{thm:DEcomp}.(i)) proves that
it is always a homotopy equivalence.  In particular, this is a natural
weak equivalence, but the homotopy inverse and homotopy data cannot be
made natural in $M$.

\section{The relative case}\label{sec:rel}

We now consider the case when $H<G$ is a closed subgroup of a positive
dimensional compact Lie group $G$ and discuss the relative norm
$N_{H}^{G}$.  In the case when $H<G$ is finite index, we already know
how to construct this norm as an equivariant smash
power~\cite[8.1]{HillHopkins-EqSymMon}; the more interesting case is
when $H<G$ is positive codimension.  In the finite index case, the
relative norm makes sense for any genuine $H$-spectrum; in the
positive codimension case, we need additional structure of precisely
the kind introduced in the previous section.   

Let $G$ be a compact Lie group and $H<G$ a closed subgroup.  Let
$V=T_{eH}G/H$ with its natural $H$-action. We then have a
canonical tangential $H,V$ structure (q.v. Terminology~\ref{ter:frame}
and Example~\ref{ex:GH}) on $G/H$ and the left multiplication action
of $G$ on $G/H$ is an action in the
category of $H$-framed manifolds.  Our setup in the
previous section assumed an $H$-invariant inner product on $V$; as the
space of such inner products is contractible and our constructions are
continuous, we can choose one arbitrarily (but a uniform way to choose
the inner product for all $H<G$ at once is to choose a $G$-invariant
inner product on $T_{e}G$).  As in general for norms, we work with
complete universes: we fix a complete $G$ universe $U$, which we
assume (wlog) contains $\bR^{\infty}$ and also a complete $H$-universe
$U_{H}$, which contains $\bR^{\infty}$.  (We can as in
Section~\ref{sec:absolute} also assume that $U^{G}=\bR^{\infty}$ and
$(U_{H})^{H}=\bR^{\infty}$ if we so choose, but this is not necessary,
and without this requirement, a uniform way to choose $U_{H}$ for all
$H<G$ at once is to use $U$ with action restricted to $H$.)

\begin{defn}\label{defn:relnorm}
For $X$ an $H$-equivariant $\aD_{V}$-algebra indexed on $U_{H}$, we
define the relative norm $N_{H}^{G}X$ as the point-set functor
\[
N_{H}^{G}X=I_{\bR^{\infty}}^{U}\bar B(G/H;I^{\bR^{\infty}}_{U_{H}}X)
\]
using the construction $\bar B$ of~\ref{cons:compressed} and point set
change of universe functors $I$.  Here the $G$-action on $\bar
B(G/H;-)$ comes from topological functoriality of $\bar B$ and the
action of $G$ on $G/H$ in the category of $H$-framed manifolds.
\end{defn}

Just as in the absolute case, the relative norm comes with a
cardinality filtration from the Ayala-Francis cardinality filtration
on (non-unital) factorization homology.  In this case, the associated graded
cofiber at filtration level $q$ looks (non-equivariantly) like 
\[
C^{\epi}_{H}(q,G/H)^{+}\sma_{\Sigma_{q}\wr H}X^{(q)}
\]
where $C_{H}(q,M)$ denotes the $H$-framed version of the configuration
space: an element consists of a $q$-tuple of elements of $F_{H}M$
whose image in $M$ is a configuration (and $C_{H}^{\epi}$ is
the subspace where the configuration in $M$ is surjective on $\pi_{0}$).
Equivariantly, we expect this piece to be weakly equivalent to
\[
I_{\bR^{\infty}}^{U}(C^{\epi}_{H}(q,G/H)^{+}\sma_{\Sigma_{q}\wr H}(I^{\bR^{\infty}}_{U_{H}}X)^{(q)}
\]
at least under the hypothesis that the underlying $H$-equivariant
orthogonal spectrum of $X$ is cofibrant in the $U_{H}$-universe model
structure or $X$ is an $H$-equivariant commutative ring orthogonal
spectrum and cofibrant in the commutative ring $U_{H}$-universe model
structure.  If this is the case, expected results from [PMI] would
then imply that the following conjecture holds.

\begin{conj}\label{conj:rhi}
Let $X'$ and $X$ satisfy the Cofibrancy Hypothesis~\ref{hyp} below.
For the point-set construction of
Definition~\ref{defn:relnorm}, a $U_{H}$-universe weak equivalence
$X'\to X$ induces a $U$-universe weak equivalence 
$N_{H}^{G}X'\to N_{H}^{G}X$.
\end{conj}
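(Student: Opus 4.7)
The plan is to reduce the invariance question to the associated graded pieces of the Ayala--Francis cardinality filtration and then invoke a wreath-product generalization of the [PMI] result quoted in the introduction. First, I would replace the unital compressed bar construction by its non-unital variant $\bar B^{nu}$ using the homotopy pushout square mentioned in Section~\ref{sec:absolute}: after applying $I_{\bR^{\infty}}^{U}$ this square remains a homotopy pushout in $\Spectra^{G}_{U}$ (the bottom-left corner is $I_{\bR^{\infty}}^{U}\bS\simeq \bS$ for either choice of input), so it suffices to prove the conjecture for $\bar B^{nu}$.

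Next I would exploit the cardinality filtration on $\bar B^{nu}(G/H;-)$. Its associated graded cofibers provide point-set models, natural in $A$, for
\[
C^{\epi}_{H}(q,G/H)^{+}\sma_{\Sigma_{q}\wr H} A^{(q)},
\]
where $C^{\epi}_{H}(q,G/H)$ is the $H$-framed surjective-on-components $q$-configuration space carrying a natural $G\times(\Sigma_{q}\wr H)$-action. Theorem~\ref{thm:DEcomp}(i) shows that $\oE_{G/H}(q)$ is $\Sigma_{q}\wr H$-equivariantly equivalent to a free $\Sigma_{q}\wr H$-CW complex, and the equivariant quotient $\oE_{G/H}(q)\to C^{\epi}_{H}(q,G/H)$ has contractible fibres, so $C^{\epi}_{H}(q,G/H)$ inherits the $(\Sigma_{q}\wr H)$-equivariant homotopy type of a free $(\Sigma_{q}\wr H)$-CW complex.

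The technical heart of the argument is then a wreath-product generalization of the [PMI] result: if $A$ is a $(\Sigma_{q}\wr H)$-free $G\times(\Sigma_{q}\wr H)$-CW complex and $X'\to X$ is a $U_{H}$-universe weak equivalence between $H$-spectra satisfying Cofibrancy Hypothesis~\ref{hyp}, then
\[
I_{\bR^{\infty}}^{U}\bigl(A\sma_{\Sigma_{q}\wr H}(I_{U_{H}}^{\bR^{\infty}}X')^{(q)}\bigr)
\to
I_{\bR^{\infty}}^{U}\bigl(A\sma_{\Sigma_{q}\wr H}(I_{U_{H}}^{\bR^{\infty}}X)^{(q)}\bigr)
\]
is a $U$-universe weak equivalence of orthogonal $G$-spectra. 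This is the main obstacle: the HHR-style cellular argument recalled in the introduction handles only $\Sigma_{q}$-free $G\times\Sigma_{q}$-CW input with a non-equivariant cofibrant $X$, whereas the wreath-product version genuinely engages the interplay between the $H$-equivariant cell structure of $X$ (indexed on a complete $H$-universe $U_{H}$) and the $(\Sigma_{q}\wr H)$-equivariant smash-power functor intertwined with the change of universe $I_{\bR^{\infty}}^{U}$. This is precisely the generalization that [PMI] is designed to supply.

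Granting this input, the remainder is formal. Filtration-level invariance combined with the fact that each filtration inclusion is a Hurewicz cofibration (and $I_{\bR^{\infty}}^{U}$ preserves Hurewicz cofibrations) propagates through the filtration colimit, and the non-unital-to-unital pushout square then transfers the conclusion from $\bar B^{nu}$ to $\bar B$. Finally, the simplicial object $\bar B_{\bullet}(G/H;-)$ is proper in the $U$-universe model structure on $\Spectra^{G}_{U}$ because its degeneracies are induced by the unit $\Id\to\bar\bD$, which is a cofibration on the relevant twisted smash powers under the cofibrancy hypothesis; hence the levelwise $U$-universe weak equivalence established above realizes to a $U$-universe weak equivalence, giving the desired invariance of $N_{H}^{G}$.
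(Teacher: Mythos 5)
What you have written is not a proof, and indeed the paper does not contain one: the statement is labeled Conjecture~\ref{conj:rhi} precisely because its justification rests on results that are not yet available. Your outline is essentially the same as the paper's own motivating sketch (cardinality filtration on the compressed bar construction, identification of the associated graded cofibers as $C^{\epi}_{H}(q,G/H)^{+}\sma_{\Sigma_{q}\wr H}X^{(q)}$, then an invariance statement for $I_{\bR^{\infty}}^{U}(A\sma_{\Sigma_{q}\wr H}(-)^{(q)})$), so there is no divergence of method to discuss; the issue is that the two pivotal steps you invoke are exactly the open ones. First, the claim that the filtration quotients of $\bar B(G/H;-)$ are, \emph{naturally and $G\times(\Sigma_{q}\wr H)$-equivariantly}, given by the $H$-framed configuration models is something the paper only says it ``expects'': in the absolute case the equivariant identification requires the point-set naturality of the Ayala--Francis comparison established in [CFH], and no relative analogue is asserted anywhere, so you cannot simply assert it. Second, and more seriously, the wreath-product invariance statement you call ``the technical heart'' is not a known theorem: the only [PMI] input the paper is willing to state as a result concerns $\Sigma_{q}$-free $G\times\Sigma_{q}$-CW complexes with \emph{non-equivariant} cofibrant $X$, and the extension to $\Sigma_{q}\wr H$-free $A$ with genuinely $H$-equivariant $X$ indexed on a complete $H$-universe, intertwined with $I_{\bR^{\infty}}^{U}$, is precisely what remains to be proved. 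Saying ``this is precisely the generalization that [PMI] is designed to supply'' concedes that your argument is conditional on an unproved statement, which is the same status the paper assigns to the conjecture itself.

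There are also smaller unsupported steps. The non-unital-to-unital reduction via a homotopy pushout square is, even in the absolute case, only claimed to be within reach of the technology of [BHM1] under a Hurewicz-cofibration condition on the unit, which is not part of Cofibrancy Hypothesis~\ref{hyp}; you would need to either add that hypothesis or argue it follows. Your deduction that $C^{\epi}_{H}(q,G/H)$ has the equivariant homotopy type of a free $\Sigma_{q}\wr H$-CW complex from Theorem~\ref{thm:DEcomp}(i) needs more than ``contractible fibres'' of a quotient map (one needs an equivariant equivalence, e.g.\ by fixed-point or fibration arguments). Finally, the realization step (properness of the simplicial object and compatibility of the filtration argument with geometric realization in the $U$-universe model structure) is plausible but asserted rather than argued. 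None of these observations mean the strategy is wrong --- it is the strategy the authors themselves envision --- but as it stands your proposal is a restatement of the paper's conditional sketch, not a proof of the conjecture.
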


\begin{hyp}\label{hyp}
For the purposes of this section, we say that an $H$-equivariant $\oD_{V}$-algebra
satisfies the ``Cofibrancy Hypothesis'' if one of the following holds:
\begin{itemize}
\item Its underlying
$H$-equivariant orthogonal spectrum is cofibrant in the
$U_{H}$-universe model structure 
\item It inherits its $H$-equivariant $\oD_{V}$-algebra by virtue of
being an $H$-equivariant commutative
ring orthogonal spectrum, and it is cofibrant in the $U_{H}$-universe
model structure on commutative ring orthogonal spectra.  
\end{itemize}
\end{hyp}

Restricting to the second case in the hypothesis, the following is an
immediate corollary of the conjecture.

\begin{cor}[Conjectural]\label{cor:rhi}
The left derived functor of $N_{H}^{G}$ exists.  Moreover, the
composite with the derived functor (for the $U_{H}$-universe homotopy
categories) of the forgetful functor $\iota$ from $H$-equivariant
commutative ring orthogonal spectra to $H$-equivariant
$\oD_{V}$-algebras is the derived functor of the composite
$N_{H}^{G}\iota$.
\end{cor}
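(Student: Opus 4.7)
My plan is to deduce both assertions formally from Conjecture \ref{conj:rhi}, exploiting that the Cofibrancy Hypothesis \ref{hyp} is deliberately formulated to cover both sides of the relevant comparison: cofibrant $\oD_V$-algebras on the one hand and images under $\iota$ of cofibrant commutative ring orthogonal $H$-spectra on the other. Nothing further about the point-set construction $\bar B$ should be needed beyond what is packaged into Conjecture \ref{conj:rhi}.

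For the existence of the left derived functor I would invoke the first clause of Conjecture \ref{conj:rhi}, which says that $N_H^G$ sends $U_H$-universe weak equivalences between cofibrant $\oD_V$-algebras to $U$-universe weak equivalences. Combined with the existence of a $U_H$-universe model structure on $H$-equivariant $\oD_V$-algebras (obtained by the standard transfer argument for operadic algebras applied to the $U_H$-universe model structure on $H$-equivariant orthogonal spectra), this is exactly the input required by Ken Brown's lemma, so $LN_H^G$ exists and is represented on the point-set level by $N_H^G\circ Q$, where $Q$ denotes cofibrant replacement in $\oD_V$-algebras.

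For the compatibility of derived composites, I would argue as follows. Let $R$ be a cofibrant $H$-equivariant commutative ring orthogonal spectrum in the $U_H$-universe model structure. The forgetful functor $\iota$ preserves all weak equivalences (it acts as the identity on underlying spectra), so its left derived functor is modeled on the point-set level by $\iota$ itself. Accordingly, $L(N_H^G\circ \iota)(R)$ is represented by $N_H^G(\iota R)$, whereas $(LN_H^G)(L\iota(R))$ is represented by $N_H^G(Q')$ for some $\oD_V$-algebra cofibrant replacement $Q'\to\iota R$. The canonical comparison map $N_H^G(Q')\to N_H^G(\iota R)$ is the output of Conjecture \ref{conj:rhi} applied to the weak equivalence $Q'\to\iota R$: its source satisfies the Cofibrancy Hypothesis via the first clause, and its target satisfies it via the second. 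This identifies the two derived composites, naturally in $R$.

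The only serious difficulty is Conjecture \ref{conj:rhi} itself; once it is granted, the corollary is a pure bookkeeping argument about the interaction of two cofibrant replacements. The real obstacle thus lies outside this corollary, in the [PMI] analysis of the cardinality filtration and of the equivariant homotopy type of $I_{\bR^{\infty}}^{U}(C_H^{\epi}(q,G/H)^{+}\sma_{\Sigma_q\wr H}(-)^{(q)})$, which is precisely the setting in which the two clauses of the Cofibrancy Hypothesis are needed simultaneously.
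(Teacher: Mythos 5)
Your argument is correct and is essentially the paper's own (implicit) reasoning: the corollary is stated as an immediate formal consequence of Conjecture~\ref{conj:rhi}, with the two cases of the Cofibrancy Hypothesis~\ref{hyp} designed exactly so that a cofibrant $\oD_{V}$-algebra replacement $Q'\to\iota R$ of a cofibrant commutative ring can be compared by applying the conjecture to that weak equivalence, just as in the absolute case (Theorem~\ref{thm:hi} and Corollary~\ref{cor:hi}). Your bookkeeping via Ken Brown's lemma and the weak-equivalence-preserving forgetful functor $\iota$ is the intended argument.
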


When $X$ is an $H$-equivariant commutative ring orthogonal spectrum,
we would like to compare $N_{H}^{G}X$ to the left adjoint functor of
the forgetful functor from $G$-equivariant commutative ring orthogonal
spectra to $H$-equivariant commutative ring orthogonal spectra.
Denote this left adjoint as $(-)\otimes_{H}G$.  The point-set model,
up to isomorphism, does not depend on the indexing universe, and
indexing on $\bR^{\infty}$, we can identify
$(I^{\bR^{\infty}}_{U_{H}}X)\otimes_{H}G$ as a quotient of the free
$G$-equivariant 
commutative ring orthogonal spectrum on
$(I^{\bR^{\infty}}_{U_{H}}X)\sma_{H}G_{+}$. Filtering this with the 
$q$th level the image of
$((I^{\bR^{\infty}}_{U_{H}}X)\sma_{H}G_{+})^{(q)}/\Sigma_{q}$, the 
associated graded point-set quotients are then given by the orthogonal
$G$-spectra 
\[
C_{H}(q,G/H)_{+}\sma_{\Sigma_{q}\wr H} (I^{\bR^{\infty}}_{U_{H}}(X/\bS))^{(q)}
\]
which we can re-index to $U$ using the point-set change of universe
$I^{U}_{\bR^{\infty}}$.  When the inclusion of $\bS$ in $X$ is a
Hurewicz cofibration of orthogonal $H$-spectra, the maps in the
filtration are Hurewicz cofibrations of orthogonal $G$-spectra, and we
can use this filtration to analyze $X\otimes_{H}G$ homotopically.
Just as in the absolute case, factorization homology of unital
algebras has a unital version of the cardinality filtration, with the
$q$-level associated graded cofiber expected (under suitable
cofibrancy hypotheses) to be $U$-universe weakly equivalent to
\[
I_{\bR^{\infty}}^{U}(C_{H}(q,G/H)^{+}
\sma_{\Sigma_{q}\wr H}(I^{\bR^{\infty}}_{U_{H}}(X/\bS))^{(q)}),
\]
that is, the $U$-re-indexed associated graded quotient above. If this
all works, it would then establish the following conjecture.

\begin{conj}
Let $R$ be a cofibrant $H$-equivariant commutative ring orthogonal
spectrum in the $U_{H}$-universe model structure.  Then there is a
natural $U$-universe weak equivalence 
\[
N_H^G R \htp R \otimes_{H} G
\]
where $(-) \otimes_{H} G$ is the left adjoint of the
(point-set) forgetful functor from $G$-equivariant commutative ring
orthogonal spectra to $H$-equivariant commutative ring orthogonal
spectra. 
\end{conj}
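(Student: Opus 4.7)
The plan is to carry out the strategy outlined in the paragraph preceding the conjecture: construct a natural filtration-preserving comparison map $c\colon N_H^G R \to R\otimes_H G$ of $G$-equivariant orthogonal spectra indexed on $U$, identify the $q$th associated graded cofiber on each side with the same expression, and then conclude by induction up the filtration. The map $c$ is the relative analogue of the filtration-preserving comparison $\int_G R\to R\otimes G$ produced in [CFH] in the absolute case. Concretely, the bar construction $\bar B(G/H; I^{\bR^\infty}_{U_H}R)$ of Construction~\ref{cons:compressed} is the geometric realization of a simplicial object built from iterated $\oD_V$-operation spaces, embedding spaces $\oE_{G/H}(n)$, and smash powers of $R$. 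Applying the natural operad map $\oD_V\to\Com$ and collapsing each $\oE_{G/H}(n)$ to a point produces, simplicial-level by simplicial-level, the analogous bar construction computing the free $G$-equivariant commutative ring on $R$ tensored over $H$, namely (after $I_{\bR^\infty}^U$) $R\otimes_H G$. These compatible quotients assemble on realization to~$c$.

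The second step is to identify the two filtrations and their associated graded. The domain carries the cardinality filtration of the unital bar construction, whose $q$th associated graded cofiber, under the Cofibrancy Hypothesis~\ref{hyp}, is $G$-equivariantly homotopy equivalent to
\[
I_{\bR^\infty}^U\bigl(C_H(q,G/H)^+\sma_{\Sigma_q\wr H}(I^{\bR^\infty}_{U_H}(R/\bS))^{(q)}\bigr),
\]
via the relative analogue of the unital Ayala-Francis identification quoted in Section~\ref{sec:absolute} together with the $\Sigma_q\wr H$-free equivariant CW structure on $\oE_{G/H}(n)$ supplied by Theorem~\ref{thm:DEcomp}.(i). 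The codomain carries the filtration by the image of $((I^{\bR^\infty}_{U_H}R)\sma_H G_+)^{(q)}/\Sigma_q$ discussed in the paragraph preceding the conjecture, whose $q$th associated graded after $I_{\bR^\infty}^U$ is the same expression. Cofibrancy of $R$ in the commutative $U_H$-universe model structure makes $\bS\to R$ a Hurewicz cofibration of orthogonal $H$-spectra, which propagates through both filtrations and makes them towers of $G$-equivariant Hurewicz cofibrations of orthogonal $G$-spectra. By construction $c$ respects the filtrations and induces the identity on each associated graded cofiber.

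The third step is to invoke the PMI-style theorem from the introduction, applied to $A = C_H(q, G/H)^+$ (viewed as a $\Sigma_q\wr H$-free $G\times(\Sigma_q\wr H)$-CW complex, extending the statement in the introduction to wreath products) and $X = R/\bS$, to conclude that the assertion ``the same expression'' is indeed a $U$-universe weak equivalence on cofibrant commutative inputs and not merely an underlying equivalence. An induction on $q$ using Hurewicz-cofibration cofiber sequences plus preservation of $U$-universe weak equivalences under sequential colimits along such cofibrations then shows $c$ is a $U$-universe weak equivalence on each filtration stage and on the colimit.

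The main obstacle is exactly the one flagged by the authors in the absolute case: the natural identification of associated graded cofibers of $\bar B(G/H; R)$ with the iterated wreath-product smash power above is sharp enough to give a homotopy equivalence but not a $G$-equivariant one, so the weak equivalence after $I_{\bR^\infty}^U$ must be extracted through a geometric-fixed-point analysis rather than read off directly from the point-set comparison. Securing this upgrade is the purpose of the [PMI] program; the relative case, where the structure group $H$ threads through both the framed configuration space $C_H(q,G/H)$ and the diagonal $H$-action on the $q$-fold smash power, requires the [PMI] input in its $\Sigma_q\wr H$-equivariant form and simultaneously the equivariantly natural cardinality filtration and associated graded identification promised in [CFH]. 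Once this technical input together with Conjecture~\ref{conj:rhi} is in hand, the filtration comparison above delivers the conjecture.
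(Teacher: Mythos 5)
This statement is presented in the paper only as a conjecture: the paragraph immediately preceding it sketches precisely the strategy you describe --- a filtration-preserving comparison between the unital cardinality filtration on $\bar B(G/H;I^{\bR^{\infty}}_{U_{H}}R)$ and the word-length filtration on the free commutative ring on $(I^{\bR^{\infty}}_{U_{H}}R)\sma_{H}G_{+}$, identification of the $q$th associated graded cofibers on both sides with $I_{\bR^{\infty}}^{U}(C_{H}(q,G/H)^{+}\sma_{\Sigma_{q}\wr H}(I^{\bR^{\infty}}_{U_{H}}(R/\bS))^{(q)})$, and an induction up the towers of Hurewicz cofibrations --- and the paper explicitly defers the two essential inputs to the unpublished [CFH] (the equivariantly natural cardinality filtration and its associated graded identification) and [PMI] (the upgrade from a non-equivariantly natural homotopy equivalence to a $U$-universe weak equivalence after $I_{\bR^{\infty}}^{U}$, via geometric fixed points). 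So your proposal is the paper's own approach, and you locate the genuine gap in exactly the place the authors do; neither your argument nor the paper's constitutes a proof, and you are right to say the conjecture only follows ``once this technical input is in hand.''

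One concrete inaccuracy in your description of the comparison map $c$: collapsing each $\oE_{G/H}(n)$ to a point does not produce the bar construction computing $R\otimes_{H}G$ --- it would discard the $G$-action and the $G/H$-parametrization entirely, yielding the wrong target. The map should instead be induced by the center-point map that sends an $H$-framed embedding of $n$ disks into $G/H$ to the $n$-tuple of frame-bundle lifts of the disk centers, i.e., a map $\oE_{G/H}(n)\to C_{H}(n,G/H)\subset G^{n}$, so that on the $n$th wedge summand one gets $\oE_{G/H}(n)_{+}\sma_{\Sigma_{n}\wr H}R^{(n)}\to G^{n}_{+}\sma_{\Sigma_{n}\wr H}R^{(n)}\iso ((I^{\bR^{\infty}}_{U_{H}}R)\sma_{H}G_{+})^{(n)}/\Sigma_{n}$. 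This is the relative analogue of the map $\int_{G}R\to R\otimes G$ from the absolute discussion and of the center-point maps used in Section~\ref{sec:circle}.
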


Analyzing the cardinality filtration gives conjectures for the
geometric fixed points:

\begin{conj}
Let $X$ be an $H$-equivariant $\oD_{V}$-algebra satisfying the
Cofibrancy Hypothesis~\ref{hyp} above.  If $K$ is a normal
subgroup of $G$ and $H$ is finite index in $HK$, then $H\cap K$ acts
trivially on $V$, the map of $H$-equivariant inner product spaces $V=T_{eH}G/H\to
T_{eHK}G/HK$ is an isomorphism, and there exists a natural 
$U^{K}$-universe $G/K$-equivariant weak equivalence
\[
(N_{H}^{G}X)^{\Phi K}
\simeq N_{(G/K)/(HK/K)}^{G/K}(X^{\Phi (H\cap K)}).
\]
If $K$ is a normal subgroup of $G$ and $H$ is not finite index in
$HK$, then the unit map induces a $U^{K}$-universe $G/K$-equivariant
weak equivalence $\bS\to (N_{H}^{G}X)^{\Phi K}$.
\end{conj}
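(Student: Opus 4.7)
The structural claims come first. When $H$ is finite index in $HK$, the identity components satisfy $H^0 = (HK)^0$, so $K^0 \subset H^0$ and hence $\mathrm{Lie}(K) \subset \mathrm{Lie}(H)$. The quotient $G/H \to G/HK$ then has equal-dimensional tangent spaces at $eH$ and $eHK$, and its differential provides the asserted isomorphism $V \to T_{eHK}G/HK$. For triviality of the $H\cap K$-action on $V$, normality of $K$ implies left multiplication by $K$ on $G/K$ is trivial, whence $K$ acts trivially on $\mathrm{Lie}(G)/\mathrm{Lie}(K)$. For $h \in H\cap K \subset K$ this yields $\Ad(h)X - X \in \mathrm{Lie}(K) \subset \mathrm{Lie}(H)$ for all $X \in \mathrm{Lie}(G)$, so $\Ad(h)$ descends to the identity on $V$.

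For the weak equivalence, apply the (conjectural) Ayala-Francis cardinality filtration to $\bar B^{nu}(G/H; -)$, compute $\Phi^K$ on each associated graded piece
$$Q_q = I_{\bR^\infty}^U\bigl(C^{\epi}_H(q, G/H)^+ \sma_{\Sigma_q \wr H} (I_{U_H}^{\bR^\infty} X)^{(q)}\bigr),$$
and then assemble. A point of $C_H^{\epi}(q, G/H)$ is a $q$-tuple $(g_1,\ldots,g_q) \in G^q$ whose image in $G/H$ is a configuration surjective on $\pi_0$, and the $K$-action is by simultaneous left multiplication. A $K$-stable finite configuration must be a disjoint union of complete $K$-orbits, each of cardinality $[HK:H] = [K:H\cap K]$. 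When $H$ is not finite index in $HK$ this cardinality is infinite, forcing $C^{\epi}_H(q, G/H)^K = \emptyset$ for all $q \geq 1$ and hence $\Phi^K Q_q \simeq *$. The unital/non-unital pushout square of Section~\ref{sec:absolute} applied to $G/H$ then collapses under $\Phi^K$, identifying $(N_H^G X)^{\Phi K}$ with $\bS$ via the unit map.

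When $H$ is finite index in $HK$, set $n = [HK:H]$, so $\Phi^K Q_q \simeq *$ unless $n \mid q$. For $q = pn$, $K$-stable configurations of $pn$ points in $G/H$ biject naturally with configurations of $p$ points in $G/HK = (G/K)/(HK/K)$ by collapsing $K$-orbits. The stabilizer of $gH$ in $K$ is $g(H\cap K)g^{-1}$, conjugate to $H\cap K$; combined with the triviality of the $H\cap K$-action on $V$ (so that $V$ is canonically an $HK/K$-representation via the iso $V \cong T_{eHK}G/HK$), the $H$-framing on the configuration descends through $\Phi^{H\cap K}$ to an $(HK/K)$-framing on the quotient configuration in $G/HK$. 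Applying the expected [PMI] geometric fixed-point formula for $\Phi^K \circ I_{\bR^\infty}^U(A \sma_{\Sigma_q \wr H} Y^{(q)})$ on $\Sigma_q\wr H$-free $G\times(\Sigma_q\wr H)$-CW complexes $A$, the diagonal isotropy $\Delta(H\cap K) \subset \Sigma_n \wr H$ at each $K$-orbit contributes the factor $(X^{\Phi(H\cap K)})^{(p)}$ and the orbit bijection contributes $C^{\epi}_{HK/K}(p, G/HK)^+$, giving
$$\Phi^K Q_{pn} \simeq I_{\bR^\infty}^{U^K}\bigl(C^{\epi}_{HK/K}(p, G/HK)^+ \sma_{\Sigma_p \wr (HK/K)} (X^{\Phi(H\cap K)})^{(p)}\bigr),$$
which is exactly the $p$th graded piece of the cardinality filtration on $N_{HK/K}^{G/K}(X^{\Phi(H\cap K)})$. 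Assembling via a natural filtered comparison map (and running the unital/non-unital pushout argument again) yields the asserted equivalence.

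The main obstacle is the norm-diagonal identification in the third paragraph: it requires extending~\cite[B.146]{HHR} to wreath products $\Sigma_q \wr H$ with $H$ a positive-dimensional compact Lie group and to positive-dimensional normal subgroups $K$. This compact-Lie diagonal formula is precisely the goal of [PMI] and is the deepest input. A secondary issue is exhibiting a bar-construction-level comparison realizing the $K$-orbit collapse that is naturally $G/K$-equivariant after change of universe; this should follow from the compact Lie naturality properties of $\int$ from [CFH], together with [PMI]'s analysis of orbit fixed points of the form $(\Sigma_n\wr H)/\Delta(H\cap K)$.
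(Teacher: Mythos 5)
This statement is a \emph{conjecture} in the paper: no proof is given there, only the remark that ``analyzing the cardinality filtration gives conjectures for the geometric fixed points'' and a commented-out sketch of the Lie-theoretic assertions. Measured against that, your proposal is consistent with the paper's intended approach and goes somewhat further. The structural claims you do prove completely and correctly: finite index of $H$ in $HK$ gives $H^{0}=(HK)^{0}$, hence $\mathrm{Lie}(K)\subset\mathrm{Lie}(H)$ and the isomorphism $V\to T_{eHK}G/HK$; and normality of $K$ gives $\Ad(h)\xi-\xi\in\mathrm{Lie}(K)\subset\mathrm{Lie}(H)$ for $h\in H\cap K$, so the isotropy action on $V=\mathrm{Lie}(G)/\mathrm{Lie}(H)$ is trivial. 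This is essentially the argument the authors left in their commented-out block (they pass through the $K$-equivariant identification with $T_{eHK}G/HK$ rather than through $\Ad$ directly, but the content is the same). Your orbit count $[K:H\cap K]=[HK:H]$ and the resulting dichotomy for $K$-stable finite configurations in $G/H$ is the correct relative analogue of the paper's absolute-case observation that $\Conf(q,G)^{K}$ is empty unless $K$ is finite with $\#K\mid q$.

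The homotopical core, however, remains exactly as conjectural as in the paper, and you rightly flag this. Everything from ``apply the cardinality filtration and compute $\Phi^{K}$ on each graded piece'' onward depends on (a) the [CFH] claim that the Ayala--Francis filtration can be realized point-set naturally for a compact Lie group of automorphisms of $G/H$, (b) the [PMI] geometric fixed-point formula for $I_{\bR^{\infty}}^{U}(A\sma_{\Sigma_{q}\wr H}Y^{(q)})$ with $H$ positive dimensional and $K$ a positive-dimensional normal subgroup, and (c) an assembly step passing from an equivalence on associated graded pieces to an equivalence of the filtered totals (which, as the paper itself warns in the analogous comparison with $R\otimes G$, is delicate because the homotopy inverses need not be equivariant). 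One smaller point you elide: for the right-hand side to parse, $X^{\Phi(H\cap K)}$ must be given the structure of an $HK/K$-equivariant $\oD_{V}$-algebra, which requires a (lax monoidal) compatibility of $\Phi^{H\cap K}$ with the operad action; this is plausible but is another input to be supplied. So the proposal is a sound strategy matching the authors' intent, not a proof.
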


Having formulated a relative construction, it is now possible to
try to iterate norms.  For $K<H<G$, we expect an equivalence between
$N_{K}^{G}X$ and an iterated construction along the lines of
$N_{H}^{G}N_{K}^{H}X$.  The first issue that arises is that the inputs
for $N_{K}^{G}$ and $N_{K}^{H}$ do not match: the former wants a
$K$-equivariant little $T_{eK}(G/K)$-disk algebra whereas the latter wants
a $K$-equivariant little $T_{eK}(H/K)$-disk algebra.  This is minor because
(having chosen a $G$-invariant inner product on $T_{e}G$), we have a
decomposition of $K$-equivariant inner product spaces
\begin{equation}\label{eq:Tdecomp}
T_{eK}(G/K) \iso T_{eK}(H/K)\oplus T_{eH}(G/H),
\end{equation}
and this gives a forgetful functor from the input for
$N_{K}^{G}$ to the input for $N_{K}^{H}$ at least on the level of
homotopy categories.  A more serious issue is
that even when we use a $K$-equivariant little $T_{eK}(G/K)$-disk
algebra $X$ as the input, the output of $N_{K}^{H}$ does not obviously
have the structure of a $H$-equivariant little $T_{eH}G/H$-disk
algebra, which is what is needed as the input to $N_{H}^{G}$.  On the
other hand, when $X$ is a $K$-equivariant little $T_{eK}(G/K)$-disk
algebra, we can make sense of the factorization homology
\[
\bar B(H/K \times D(T_{eH}(G/H));X)
\]
where $D$ denotes the open unit disk and we understand $H/K \times
D(T_{eH}(G/H))$ as a $K,T_{eK}(G/K)$-framed manifold using the
isomorphism~\eqref{eq:Tdecomp} again. We expect the following to hold:

\begin{conj}
Let $K<H<G$ and let $X$ be a $K$-equivariant little $T_{eK}(G/K)$-disk
algebra.  There is a natural zigzag of $H$-equivariant homotopy equivalence
\[
\bar B(H/K \times D(T_{eH}(G/H));X)
\simeq B(H/K,i^{*}X)
\]
where $i^{*}$ denotes a functor to $K$-equivariant little
$T_{eK}(G/K)$-disk algebras modeling the homotopical forgetful functor
for the decomposition in~\eqref{eq:Tdecomp}.
\end{conj}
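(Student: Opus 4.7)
The plan is to reduce the conjecture to an equivariant version of Dunn additivity for the little $V$-disks operads, together with the fact that factorization homology over a (vector space) disk recovers the underlying algebra. The key observation is that the $K,T_{eK}(G/K)$-framing on $H/K\times D(T_{eH}(G/H))$ is built from product data, which should allow an iterated computation.

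First, using the decomposition~\eqref{eq:Tdecomp}, there is a canonical map of $K$-equivariant operads $\oD_{T_{eK}(H/K)}\times \oD_{T_{eH}(G/H)}\to \oD_{T_{eK}(G/K)}$ given by forming products of affine disk embeddings; this is a $K$-equivariant weak equivalence of operads by equivariant Dunn additivity. Via this map, a $K$-equivariant $\oD_{T_{eK}(G/K)}$-algebra $X$ restricts to a $K$-equivariant $\oD_{T_{eK}(H/K)}$-algebra in $K$-equivariant $\oD_{T_{eH}(G/H)}$-algebras, and $i^{*}X$ is given by this restriction composed with factorization homology in the inner $\oD_{T_{eH}(G/H)}$-variable over the contractible manifold $D(T_{eH}(G/H))$. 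Second, the product framing on $H/K\times D(T_{eH}(G/H))$ is by construction the image under the Dunn additivity map of the individual framings of $H/K$ (as a $K,T_{eK}(H/K)$-framed manifold) and $D(T_{eH}(G/H))$ (as a $K,T_{eH}(G/H)$-framed manifold, using the canonical split trivialization of Example~\ref{ex:V}). Tracing through Constructions~\ref{cons:monadic}--\ref{cons:compressed}, and using compatibility of the operad map with $H$-framed embeddings of product manifolds, yields a natural $H$-equivariant zigzag of weak equivalences between $\bar B(H/K\times D(T_{eH}(G/H));X)$ and the iterated bar construction $\bar B(H/K;\bar B(D(T_{eH}(G/H));X))$. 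Finally, since $D(T_{eH}(G/H))$ is canonically $K$-framed by a vector space, a standard unit-type argument shows that the inner bar construction $\bar B(D(T_{eH}(G/H));X)$ is naturally $K$-equivariantly weakly equivalent to $i^{*}X$ as a $K$-equivariant $\oD_{T_{eK}(H/K)}$-algebra, which composes to give the desired zigzag
\[
\bar B(H/K\times D(T_{eH}(G/H));X)\simeq \bar B(H/K;i^{*}X).
\]

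The main obstacle will be establishing equivariant Dunn additivity at the point-set level in a way compatible with the compressed bar construction of~\ref{cons:compressed}: the identification of $\bar B$ for a product framed manifold with the iterated $\bar B$ must respect both the $\Sigma_{n}\wr K$-coequalizers in each simplicial degree and the full $H$-action induced by left multiplication on $H/K$, and the product of wreath products complicates the bookkeeping. A related subtlety is that the inner equivalence $\bar B(D(T_{eH}(G/H));X)\simeq i^{*}X$ needs to be natural and to take place in the category of $K$-equivariant $\oD_{T_{eK}(H/K)}$-algebras rather than merely at the level of underlying orthogonal spectra, so that it can be substituted into the outer $\bar B(H/K;-)$; this is the sort of rectification the [CFH] machinery appears designed to provide under the Cofibrancy Hypothesis~\ref{hyp}.
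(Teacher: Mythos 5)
Before anything else: the statement you are addressing is stated in the paper as a \emph{conjecture}, and the paper supplies no proof of it --- the surrounding discussion only explains how to interpret the two sides. So there is no argument of the authors to compare yours against; the question is whether your outline closes the gap they deliberately left open. As written, it does not.

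The decisive problem is your first step. You assert that the map of $K$-equivariant operads $\oD_{T_{eK}(H/K)}\times\oD_{T_{eH}(G/H)}\to\oD_{T_{eK}(G/K)}$, given by forming products of affine disk embeddings, is a weak equivalence ``by equivariant Dunn additivity.'' This conflates the cartesian product of operads with the Boardman--Vogt tensor product. The product map exists, but it is not an equivalence even non-equivariantly: already in arity $2$, the source $\oD_{\bR}(2)\times\oD_{\bR}(2)$ has four contractible components, while $\oD_{\bR^{2}}(2)$ is connected (homotopy equivalent to $S^{1}$). Dunn additivity is the statement that $\oD_{V\oplus W}$-algebras are equivalent to $\oD_{V}$-algebras in $\oD_{W}$-algebras, i.e., algebras over the tensor product $\oD_{V}\otimes\oD_{W}$, and producing such an interchange structure from a $\oD_{V\oplus W}$-algebra is a genuine rectification problem, not a restriction along a map of operads. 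Consequently your second step --- regarding $X$ as a $K$-equivariant $\oD_{T_{eK}(H/K)}$-algebra in $\oD_{T_{eH}(G/H)}$-algebras so that the iterated bar construction $\bar B(H/K;\bar B(D(T_{eH}(G/H));X))$ even makes sense --- is precisely the hard point, and it is assumed rather than proved. The same issue recurs at the end: for the inner object $\bar B(D(T_{eH}(G/H));X)$ to carry a natural $K$-equivariant $\oD_{T_{eK}(H/K)}$-algebra structure (so that it can be substituted into the outer bar construction and compared with $i^{*}X$ as an algebra rather than merely as a spectrum), one again needs point-set interchange data.

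Your overall strategy --- a Fubini-type decomposition of factorization homology over the product $H/K\times D(T_{eH}(G/H))$ followed by collapsing the contractible disk factor --- is surely the intended route, and your closing paragraph correctly identifies where the bookkeeping with the $\Sigma_{n}\wr K$-coequalizers and the $H$-action gets delicate. But the equivariant, point-set Dunn additivity compatible with the compressed bar construction of Construction~\ref{cons:compressed} is the \emph{content} of the conjecture, not a lemma you can cite; until that interchange structure is constructed, the argument does not get off the ground.
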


In particular, $I_{\bR^{\infty}}^{U_{H}}\bar B(H/K \times
D(T_{eH}(G/H));I_{U_{K}}^{\bR^{\infty}}X)$ should be an acceptable
stand-in for $N_{K}^{H}(i^{*}X)$.

The advantage of $\bar B(H/K \times D(T_{eH}(G/H));X)$ is that the
$K$-framed manifold $H/K \times D(T_{eH}(G/H))$ comes with the
structure of a $\oD^{T_{eH}(G/H)}_{H}$-algebra in the category
$\aE_{K}$, as we now explain.  The key observation is that the
(diagonal) (left) multiplication by $H$ on $H/K\times D(T_{eH}G/H)$ is a
$K$-framed local isometry.  The tangential $K,T_{eK}(G/K)$-structure on
$H/K\times D(T_{eH}G/H)$ has $K$-frame bundle $H\times D(T_{eH}G/H)$
with $K$ acting on the right of $H$ (acting trivially on $D(T_{eH}G/H)$).
The identification of 
\[
(H\times D(T_{eH}G/H))\times_{K} T_{eK}(G/K)
\]
with the tangent bundle of $H/K\times D(T_{eH}G/H)$ sends an element 
\begin{multline*}
((h,u),v\oplus w)\in (H\times D(T_{eH}G/H))\times_{K} (T_{eK}(H/K)\oplus T_{eH}(G/H)) \\
\iso (H\times D(T_{eH}G/H))\times_{K} T_{eK}(G/K)
\end{multline*}
to $((h,u), DL_{h}|_{eK}v\oplus h\cdot w)$, where
$DL_{h}|_{eK}$ denotes the derivative of left multiplication by $h$ on
$H/K$.  For an element $g\in H$, multiplication by $g$ on $H/K\times
D(T_{eH}G/H)$ then clearly sends $K$-frames to $K$-frames.  Given an
element 
\[
((\lambda,h_{1}),\dotsc,(\lambda_{n},h_{n}))\in \oD^{T_{eH}(G/H)}_{H}(n)
\]
of the
$H$-framed little $T_{eK}(G/H)$-disk operad, we then get a $K$-framed
embedding
\[
(H/K\times D(T_{eH}G/H))\amalg \dotsb \amalg (H/K\times
D(T_{eH}G/H))\to (H/K\times D(T_{eH}G/H)) 
\]
using the group elements $h_{i}$ diagonally and the affine
transformations $\lambda$ on the disks $D(T_{eH}G/H)$.

A fundamental property of factorization homology is that it is
symmetric monoidal in both variables.  In particular, 
our construction $\bar B$ is strong symmetric monoidal on the
point-set level in the manifold variable: it takes disjoint unions of
manifolds to smash products of orthogonal spectra up to coherent
natural isomorphism.  It follows that an operadic algebra structure on
the manifold $M$ (for disjoint union in the category $\aE_{K}$)
induces the same kind of operadic algebra structure on $\bar B(M;X)$
for fixed $X$.  In the present context, this discussion proves the
following observation. 

\begin{prop}
Let $K<H<G$ and let $X$ be a $K$-equivariant little $T_{eK}(G/K)$-disk
algebra.  The orthogonal spectrum $\bar B(H/K \times
D(T_{eH}(G/H));X)$ has the canonical structure of a
$\oD^{T_{eH}(G/H)}_{H}$-algebra. 
\end{prop}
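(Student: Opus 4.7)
The plan is to invoke the strong symmetric monoidal property of $\bar B(-;X)$ in the manifold variable together with the operadic algebra structure on the manifold $H/K \times D(T_{eH}(G/H))$ in the category $(\aE_{K}, \amalg)$ that was essentially constructed in the paragraph preceding the proposition. The proof is thus a two-step transport-of-structure argument.

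First, I would consolidate the preceding discussion into the statement that the assignment
\[
((\lambda_{1},h_{1}),\dotsc,(\lambda_{n},h_{n}))
\longmapsto
\bigl((hK, u)_{i} \mapsto (h_{i}hK, \lambda_{i}(u))\bigr)
\]
defines a continuous map of operads
\[
\oD^{T_{eH}(G/H)}_{H} \longrightarrow \oEnd_{\amalg}^{\aE_{K}}(H/K\times D(T_{eH}(G/H)))
\]
into the endomorphism operad (with respect to disjoint union) in the topological category $\aE_{K}$ of $K$-framed manifolds and $K$-framed embeddings. The content to verify here is what the preceding discussion already addresses: that left multiplication by each $h_{i}\in H$ on $H/K$ is a $K$-framed local isometry (using the splitting of the $K$-frame bundle $H\times D(T_{eH}(G/H))$ described there) and that each $\lambda_{i}$ is a parallelized embedding on the disk factor; these assemble on disjoint unions into a $K$-framed embedding because the images of the $\lambda_{i}$ are disjoint by the definition of $\oD^{T_{eH}(G/H)}_{H}(n)$. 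Continuity, compatibility with the $\Sigma_{n}$-action by relabeling, unitality, and operadic associativity are all immediate from the corresponding properties of the $H$-framed little disk operad (Definition~\ref{defn:lilVdisk}).

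Second, I would apply $\bar B(-;X)$. As noted just before the proposition, this functor is strong symmetric monoidal on the point-set level in the manifold variable, that is, there are coherent natural isomorphisms
\[
\bar B(M\amalg N;X) \iso \bar B(M;X)\sma \bar B(N;X), \qquad \bar B(\emptyset;X)\iso \bS.
\]
A strong symmetric monoidal functor sends operadic algebras to operadic algebras for the same operad. Applying it to the operadic algebra structure from step one yields operad structure maps
\[
\oD^{T_{eH}(G/H)}_{H}(n)_{+}\sma \bar B(H/K\times D(T_{eH}(G/H));X)^{(n)}
\longrightarrow \bar B(H/K\times D(T_{eH}(G/H));X)
\]
satisfying the axioms of a $\oD^{T_{eH}(G/H)}_{H}$-algebra in orthogonal spectra, as required.

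The only genuinely delicate point I anticipate is verifying that the operadic structure maps really do land in $K$-framed embeddings (rather than merely smooth embeddings) with continuous dependence on the operad parameters; this amounts to checking that the homotopy data in Definition~\ref{defn:framemap} vary continuously in $((\lambda_{i},h_{i}))\in \oD^{T_{eH}(G/H)}_{H}(n)$ when one uses the canonical section of the $K$-frame bundle of $H/K\times D(T_{eH}(G/H))$ coming from the product decomposition. Once this continuity is in hand, everything else is formal from the strong symmetric monoidality of $\bar B(-;X)$.
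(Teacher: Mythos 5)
Your proposal is correct and is essentially the paper's own argument: it makes the manifold $H/K\times D(T_{eH}(G/H))$ into a $\oD^{T_{eH}(G/H)}_{H}$-algebra in $(\aE_{K},\amalg)$ via exactly the map you display, and then transports that structure through the point-set strong symmetric monoidality of $\bar B(-;X)$ in the manifold variable. One small wording caveat: because the $K$-framing of the product twists the disk direction by the $H$-coordinate of the frame, the framed-local-isometry statement should be about the \emph{diagonal} $H$-action on the whole product (as in the paper) rather than left multiplication on the $H/K$ factor alone, though your displayed formula already accounts for this since $\lambda_{i}$ incorporates the $h_{i}$-twist.
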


This allows us to make sense of the iterated norm construction.  We
conjecture the following relationship.

\begin{conj}
Let $K<H<G$ and let $X$ be a $K$-equivariant little $T_{eK}(G/K)$-disk
algebra whose underlying $K$-equivariant orthogonal spectrum is
cofibrant in the $U_{K}$-universe.  There is a natural 
$U$-universe $G$-equivariant weak equivalence
\[
N_{K}^{G}X \simeq 
N_{H}^{G} I_{\bR^{\infty}}^{U_{H}}
\bar B(H/K \times D(T_{eH}(G/H));I^{\bR^{\infty}}_{U_{K}}X).
\]
\end{conj}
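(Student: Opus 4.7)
The plan is to reduce the statement to a purely non-equivariant/bisimplicial comparison in $\Spectra^{BG}$, then construct the comparison via a two-level bar construction, and finally invoke a framed pushforward-along-a-fiber-bundle theorem for $\bar B$. First I would unravel both sides using Definition~\ref{defn:relnorm}. Write $X' := I^{\bR^{\infty}}_{U_{K}}X$, viewed as a $K$-equivariant $\aD_{V}$-algebra with $V=T_{eK}(G/K)$. The left-hand side is $I^{U}_{\bR^{\infty}}\bar B_{K}(G/K;X')$. On the right, the $\oD^{T_{eH}(G/H)}_{H}$-algebra structure needed to feed the outer bar comes from the proposition just stated, and the identities $I^{U}_{U_{H}}\circ I^{U_{H}}_{\bR^{\infty}} = I^{U}_{\bR^{\infty}}$ and $I^{U_{H}}_{\bR^{\infty}}\circ I^{\bR^{\infty}}_{U_{H}} = \id$ let us rewrite it as $I^{U}_{\bR^{\infty}}\bar B_{H}\bigl(G/H;\bar B_{K}(H/K\times D(T_{eH}(G/H));X')\bigr)$. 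It therefore suffices to produce a natural zig-zag of $G$-equivariant weak equivalences in $\Spectra^{BG}$ between $\bar B_{K}(G/K;X')$ and $\bar B_{H}(G/H;\bar B_{K}(H/K\times D(T_{eH}(G/H));X'))$, with $G$ acting by left multiplication on $G/K$ on one side and on $G/H$ on the other. The $U$-universe statement then follows by applying $I^{U}_{\bR^{\infty}}$, using the expected preservation of weak equivalences on inputs satisfying the Cofibrancy Hypothesis~\ref{hyp} (as in Conjecture~\ref{conj:rhi}).

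Next I would construct the comparison via a bisimplicial bar construction. The geometric input is that $\pi\colon G/K\to G/H$ is a fiber bundle with fiber $H/K$, and the $K$-equivariant splitting $T_{eK}(G/K)\iso T_{eK}(H/K)\oplus T_{eH}(G/H)$ implies that over any small $H$-framed disk $U\subset G/H$ the preimage $\pi^{-1}(U)$ is $K$-framed diffeomorphic to $H/K\times U$. I would then form the bisimplicial object $B^{\diamond}_{\bullet,\bullet}(X')$ whose $(p,q)$-level is
\[
\bigl(\bar\bE_{G/H}\circ \bar\bD_{H}^{\circ p}\circ \bar\bE_{H/K\times D(T_{eH}(G/H))}\circ \bar\bD_{K}^{\circ q}\bigr)(X'),
\]
with the outer simplicial direction realizing the monadic bar for $\bar B_{H}(G/H;-)$ and the inner direction realizing the bar for $\bar B_{K}(H/K\times D(T_{eH}(G/H));-)$. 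Realizing in the inner direction then the outer yields exactly the right-hand side. A natural map to the left-hand side $\bar B_{K}(G/K;X')$ is obtained by composition of framed embeddings: an element of $\oE_{G/H}(p)$ paired with an element of $\oE_{H/K\times D(T_{eH}(G/H))}(q)$ in each outer slot assembles, using the local product identification above, into an element of $\oE_{G/K}(pq)$, and this assembly is compatible with the monadic face maps. This is the bar-level refinement of the $\oD^{T_{eH}(G/H)}_{H}$-algebra structure on $\bar B_{K}(H/K\times D(T_{eH}(G/H));X')$ recorded in the preceding proposition.

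The hard part is showing that both structure maps out of $B^{\diamond}$ are weak equivalences. This is a framed, equivariant version of the factorization homology pushforward theorem (in the style of Ayala-Francis and of Lurie, HA~5.5.5.8) for the fiber bundle $G/K\to G/H$ and the compressed bar construction $\bar B$. The strategy is local-to-global: cover $G/H$ by small $H$-framed disks over which $\pi$ trivializes, use the codescent behavior of $\bar B(-;X')$ for such covers (which is non-equivariant in [CFH] but equivariantly coherent by naturality), and reduce to the case of a trivialized bundle, where the result follows from strong symmetric monoidality of $\bar B$ in the manifold variable and the fact that $D(T_{eH}(G/H))$ is $H$-framed diffeomorphic to $T_{eH}(G/H)$. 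On associated graded for the cardinality filtrations of both sides, the statement reduces to an equivariant configuration-space Fubini: a $K$-framed configuration of $n$ points in $G/K$ decomposes as a choice of $p$ fibers of $\pi$ met nontrivially together with a configuration in each such fiber, summing to $n$, giving a $(\Sigma_{n}\wr K)$-equivariant homotopy equivalence between $C^{\epi}_{K}(n,G/K)$ and an appropriate $\Sigma_{p}\wr H$-twisted product of $C^{\epi}_{H}(p,G/H)$ with $\prod_{i}C^{\epi}_{K}(q_{i},H/K\times D(T_{eH}(G/H)))$. The principal obstacle will be bookkeeping the equivariant structures through this bisimplicial model: the $H$-action on the inner bar produced by $H$ acting through $K$-framed diffeomorphisms of $H/K\times D(T_{eH}(G/H))$ must coherently agree with the $H$-action needed to feed the outer bar via the $H$-framing of $G/H$, and the global $G$-equivariance must reconstruct the left-multiplication $G$-action on $G/K$. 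The compressed construction $\bar B$ is designed precisely to keep this coherence tight; nevertheless, verifying it through the codescent step, together with propagating the Cofibrancy Hypothesis from $X$ through the inner $\bar B_{K}$ to a good input for the outer $\bar B_{H}$, will be the most delicate piece of the argument and will likely require additional input from [CFH] and [PMI].
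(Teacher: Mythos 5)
The paper offers no proof of this statement: it is stated as a conjecture, and the surrounding text only supplies the ingredient needed to make the right-hand side well defined (the $\oD^{T_{eH}(G/H)}_{H}$-algebra structure on $\bar B(H/K\times D(T_{eH}(G/H));X)$, constructed via the group action of $H$ on $H/K\times D(T_{eH}(G/H))$ in $\aE_{K}$). Your overall strategy --- reduce to a pushforward/Fubini statement for $\bar B$ along the fiber bundle $G/K\to G/H$, organize it as a bisimplicial bar construction, and check it on the associated graded of the cardinality filtration via a configuration-space decomposition --- is the expected route and matches the spirit of the paper. But two steps as written have genuine gaps.

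First, the comparison map out of $B^{\diamond}$ is not constructed. To assemble an element of $\oE_{G/H}(p)=\aE_{H}(D\times\{1,\dotsc,p\},G/H)$ with inner embeddings into an element of $\oE_{G/K}(q_{1}+\dotsb+q_{p})$ (note: not $pq$), you must lift an \emph{arbitrary} $H$-framed embedding of a disk into $G/H$ to a $K$-framed embedding of $H/K\times D(T_{eH}(G/H))$ into $G/K$, coherently with composition. The paper only produces such lifts for elements of the operad $\oD^{T_{eH}(G/H)}_{H}$, where the group elements $h_{i}$ and the affine maps $\lambda_{i}$ give a canonical formula; for general embeddings there is no canonical trivialization of $G/K\to G/H$ over the image, only a contractible space of choices, and making these choices functorial is precisely the hard content. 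Second, your local-to-global/codescent step produces at best weak equivalences relative to a cover of $G/H$ that cannot be chosen $G$-invariantly, and --- more seriously --- weak equivalences in $\Spectra^{BG}$ are not preserved by the point-set change of universe $I^{U}_{\bR^{\infty}}$. The paper's framework requires either genuine $G$-equivariant homotopy equivalences before change of universe (as in Section~\ref{sec:circle}) or control of each cardinality-filtration quotient in the form $I^{U}_{\bR^{\infty}}(A\sma_{\Sigma_{q}\wr K}Y^{(q)})$ with $A$ a $\Sigma_{q}\wr K$-free $G$-CW complex, which is what [PMI] is for; routing the argument through codescent delivers neither. Your configuration-space Fubini on associated graded is the right target, but the proposal needs to commit to that filtration argument (with the [PMI] input) rather than to codescent, and needs the coherent lifting above to even define the map being analyzed.
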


As mentioned above, factorization homology is symmetric monoidal in
both variables.  Symmetric monoidality in the algebra variable should
imply this expected property of norms.

\begin{conj}
Let $X$ and $Y$ be $H$-equivariant $\oD_{V}$-algebras whose 
underlying $H$-equivariant orthogonal spectra are cofibrant in the
$U_{H}$-universe model structure. There is a natural 
$U$-universe $G$-equivariant weak equivalence
\[
N_{H}^{G}X \sma N_{H}^{G}Y \simeq N_{H}^{G}(X\sma Y).
\]
\end{conj}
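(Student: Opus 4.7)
The plan is to reduce the conjecture to the strong symmetric monoidality of the compressed bar construction $\bar B_{H}(M;-)$ in the algebra variable. Writing $M=G/H$ and observing that the point-set change of universe functors $I_{U_{H}}^{\bR^{\infty}}$ and $I_{\bR^{\infty}}^{U}$ are strong symmetric monoidal for the smash product (being equivalences of categories that preserve the point-set smash product of orthogonal spectra), the statement becomes the existence of a natural $G$-equivariant weak equivalence
\[
\bar B_{H}(M;X)\sma \bar B_{H}(M;Y)\simeq \bar B_{H}(M;X\sma Y),
\]
where $X\sma Y$ carries the $\oD_{V}$-algebra structure provided by the symmetric monoidal structure on the category of $H$-equivariant $\oD_{V}$-algebras established in [CFH].

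First I would construct the comparison map simplicially. At simplicial level $q$, the source $\bar\bE_{M}\bar\bD^{q}X\sma \bar\bE_{M}\bar\bD^{q}Y$ is a bi-indexed wedge of summands of the form
\[
\oE_{M}(m)_{+}\sma_{\Sigma_{m}\wr H}(\bar\bD^{q} X)^{(m)}\sma \oE_{M}(n)_{+}\sma_{\Sigma_{n}\wr H}(\bar\bD^{q} Y)^{(n)},
\]
while the $n$th summand of the target $\bar\bE_{M}\bar\bD^{q}(X\sma Y)$ has the form $\oE_{M}(n)_{+}\sma_{\Sigma_{n}\wr H}(\bar\bD^{q}(X\sma Y))^{(n)}$. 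The map uses shuffle isomorphisms identifying $X^{(m)}\sma Y^{(n)}$ with summands of $(X\sma Y)^{(m+n)}$, combined with a cooperadic diagonal on $\oD_{V}$ (the $E_{\infty}$-Hopf structure in the commutative case, or the Dunn-additive structure in general) and a method of merging pairs of $H$-framed embeddings into single ones. Because two disjoint configurations in $M$ together are not generally disjoint, the merging is done up to homotopy through the operadic action. The cofibrancy hypothesis on the underlying orthogonal spectra of $X$ and $Y$ guarantees that the point-set smash product agrees with the derived one at each simplicial level, and a Reedy argument on the geometric realization then promotes a levelwise $G$-equivariant equivalence to a weak equivalence of realizations. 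Theorem~\ref{thm:DEcomp} controls the free wreath-product orbits and ensures that the relevant smashes preserve equivalences.

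The main obstacle is specifying the $\oD_{V}$-algebra structure on $X\sma Y$ and verifying that the comparison map respects both this structure and the left $G$-action on $M=G/H$ through $H$-framed local isometries. For $X$ and $Y$ arising from $H$-equivariant commutative ring orthogonal spectra, the $\oD_{V}$-algebra structure on $X\sma Y$ is canonical, and the conjecture reduces to the fact that the left adjoint $(-)\otimes_{H}G$ sends smash products (which are coproducts in commutative rings) to smash products; this case is essentially automatic modulo the preceding conjecture identifying $N_{H}^{G}R$ with $R\otimes_{H} G$. For general $\oD_{V}$-algebras with $V$ finite-dimensional, constructing a natural $\oD_{V}$-algebra structure on $X\sma Y$ compatible with the compressed bar construction and with the $G$-action on $M$ is the technical heart of the argument and appears to depend crucially on the equivariant symmetric monoidal structure developed in [CFH].
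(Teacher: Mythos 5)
The statement you address is given in the paper only as a conjecture: the authors offer no proof, just the one-line heuristic that factorization homology is symmetric monoidal in the algebra variable. Your high-level strategy --- reduce to a natural equivalence $\bar B_{H}(G/H;X)\sma\bar B_{H}(G/H;Y)\simeq \bar B_{H}(G/H;X\sma Y)$ and transport it through the strong symmetric monoidal point-set change of universe functors --- matches that heuristic. But the details you propose contain steps that would fail.

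First, your comparison map goes the wrong way. There is no map that ``merges pairs of $H$-framed embeddings into single ones,'' even up to the operadic action: two embeddings of disjoint unions of disks into $M$ need not have disjoint images, and resolving this ``up to homotopy'' coherently across all simplicial levels is exactly the kind of rectification problem that cannot be waved away. The natural point-set map goes in the opposite direction, from $\bar\bE_{M}(X\sma Y)$ to $\bar\bE_{M}X\sma\bar\bE_{M}Y$, induced by the diagonal $\oE_{M}(n)\to\oE_{M}(n)\times\oE_{M}(n)$ and the isomorphism $(X\sma Y)^{(n)}\iso X^{(n)}\sma Y^{(n)}$; correspondingly the $\oD_{V}$-algebra structure on $X\sma Y$ is just the standard one coming from the diagonal of the operad (an operad in spaces is canonically a Hopf operad), so that part is not the ``technical heart'' and does not require [CFH]. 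Second, even with the map oriented correctly, it is not a levelwise equivalence of simplicial objects: at simplicial level $q$ the source is a wedge indexed by a single arity $n$ while the target is indexed by pairs of arities, so your ``Reedy argument promoting a levelwise equivalence'' has nothing to promote. The equivalence can only hold after geometric realization, and proving it there is the siftedness/cofinality-of-the-diagonal statement for the disk category over $M$ --- the key input, which is absent from your sketch. Finally, that argument at best produces an equivalence before change of universe; the conjecture asserts a $U$-universe equivalence after applying $I_{\bR^{\infty}}^{U}$, which does not preserve arbitrary weak equivalences. Bridging that gap is precisely what the paper's cardinality filtration and the geometric fixed point analysis of [PMI] are for, and your proposal (like the paper, which is why this remains a conjecture there) does not engage with it.
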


\section{The case of the circle group}\label{sec:circle}

The paper~\cite{ABGHLM} defines norms and relative norms for the
circle group $S^{1}$ in terms of cyclic bar constructions.  In this
section, we compare the point-set construction of the norms
in~\cite[1.1,8.2]{ABGHLM} to the point-set construction of the norms
in the previous section in the case $G=S^{1}$; see
Theorem~\ref{thm:circle} for a precise statement.  This section is
independent of the work of [CFH] and [PMI], and depends on the rest of
this paper only in its use of the notation, terminology, and
definitions.

We fix the positive integer $m$ and consider the subgroup
$C_{m}<S^{1}$.  To avoid notational confusion in what follows, we
consistently write $Z_{n}$ for the cyclic subgroup of $\Sigma_{n}$
generated by the cyclic permutation $(1\dotsb n)$.

Let $U$ denote a complete $S^{1}$-universe; we write
$U_{C_{m}}$ for $U$ regarded as a complete $C_{m}$-universe.  The
point-set \cite[8.2]{ABGHLM} norm is a functor from
$C_{m}$-equivariant associative ring spectra indexed on $U_{C_{m}}$ to
$S^{1}$-equivariant orthogonal spectra indexed on $U$ built as a
composite
\[
I_{\bR^{\infty}}^{U}N^{cyc,C_{m}}_{\sma}I_{U_{C^{n}}}^{\bR^{\infty}}
\]
using a $C_{m}$ relative cyclic bar construction (which we review
starting after Theorem~\ref{thm:cycbar} below) and point-set change of universe
functors.  For the point-set norm of Definition~\ref{defn:relnorm}, we 
note that the action of $C_{m}$ on the tangent space
$T_{eC_{m}}S^{1}/C_{m}$ is trivial, and (to simplify notation) we
identify this tangent space with $\bR$ using the standard metric and
orientation on $S^{1}$ (with total length $2\pi$, giving $S^{1}/C_{m}$
length $2\pi/m$).  The norm 
\[
I_{\bR^{\infty}}^{U}\bar B_{C_{m}}(S^{1}/C_{},I_{U_{C^{n}}}^{\bR^{\infty}}(-))
\]
is then a functor from $C_{m}$-equivariant $\oD_{\bR}$-algebras
(little $1$-disk algebras) in orthogonal spectra indexed on
$U_{C_{m}}$ to $S^{1}$-equivariant orthogonal spectra indexed on $U$.
The inputs for these functors differ, but we have a forgetful functor
from $C_{m}$-equivariant associative ring orthogonal spectra to
$C_{m}$-equivariant $\oD_{\bR}$-algebras (which is an equivalence on
homotopy categories), so we state the comparison
in the associative case.  The following is the main result of this
section. 

\begin{thm}\label{thm:circle}
Let $R$ be a $C_{m}$-equivariant associative ring orthogonal
spectrum indexed on $\bR^{\infty}$. There is a natural zigzag of
natural $S^{1}$-equivariant homotopy equivalences 
\[
\bar B_{C_{m}}(S^{1}/C_{m},R)\from
B(\bar {\mathrm{E}},\bar \bD,R)\to
B(\bar {\mathrm{E}}^{c},\bar \bD^{c},R)\from 
B(\bar {\mathrm{C}}^{c},\bT,R)
\]
and a natural isomorphism 
\[
B(\bar{\mathrm C}^{c},\bT,R)\iso N^{cyc,C_{m}}_{\sma}B(\bT,\bT,R)
\]
of $S^{1}$-equivariant orthogonal spectra indexed on $\bR^{\infty}$.
\end{thm}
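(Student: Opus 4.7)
The plan is to interpolate between the compressed bar construction of Construction~\ref{cons:compressed} and the cyclic bar construction through two intermediate bar constructions, exploiting the fact that in dimension one the $C_{m}$-equivariant little $1$-disks operad contains a strictly associative (tensor algebra) core. The crucial simplification is that $V=T_{eC_{m}}(S^{1}/C_{m})\iso\bR$ carries the trivial $C_{m}$-action, so the little $1$-disks may be replaced by orientation-preserving affine maps with no rotational component, and the associative operad sits inside as an equivariant deformation retract.

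First I would introduce the intermediate models. The ``$c$'' superscript should denote restriction to affine embeddings that are compositions of translations and positive rescalings of a single fixed oriented interval, and the induced configurations on $S^{1}/C_{m}$ from the operad action. The inclusions $\bar{\bD}^{c}\hookrightarrow \bar{\bD}$ of monads and $\bar{\mathrm{E}}^{c}\hookrightarrow \bar{\mathrm{E}}$ of corresponding bimodules are $\Sigma_{n}\wr C_{m}$-equivariant deformation retracts arity-wise, since the quotient in each arity is a product of contractible spaces: the (positive) rescalings and the trivial rotation component $SO(1)$. Theorem~\ref{thm:DEcomp}(i,ii) ensures the necessary freeness for the bar constructions to be formed pointset and for realization to preserve equivalences, yielding the two equivalences $B(\bar{\mathrm{E}},\bar{\bD},R)\to\bar B_{C_{m}}(S^{1}/C_{m},R)$ and $B(\bar{\mathrm{E}},\bar{\bD},R)\to B(\bar{\mathrm{E}}^{c},\bar{\bD}^{c},R)$ as $S^{1}$-equivariant homotopy equivalences after geometric realization.

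For the third arrow, I would describe $\bar{\mathrm{C}}^{c}$ as the subspace of $\bar{\mathrm{E}}^{c}$ parameterizing configurations coming from \textit{cyclically ordered} $n$-tuples on $S^{1}/C_{m}$ together with a choice of basepoint arc; this should again be a $Z_{n}\wr C_{m}$-equivariant deformation retract of $\bar{\mathrm{E}}^{c}$, one arity at a time, by sliding each configuration along the circle into a standard cyclically ordered position. On the operad side, the inclusion $\bT\hookrightarrow\bar{\bD}^{c}$ of the associative operad (realized as configurations in a linear interval with a fixed cyclic ordering) is a standard arity-wise equivariant equivalence. Together these give a levelwise $(Z_{n}\wr C_{m})$-equivariant equivalence of simplicial objects, hence an $S^{1}$-equivariant equivalence on realization.

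Finally, for the isomorphism $B(\bar{\mathrm{C}}^{c},\bT,R)\iso N^{cyc,C_{m}}_{\sma}B(\bT,\bT,R)$, I expect this to hold essentially by inspection of simplicial objects. In simplicial degree $q$ both sides are a smash product of a configuration space of $q{+}1$ cyclically ordered points on $S^{1}/C_{m}$ (the \emph{length-}$q$ part from the $\bT^{q}$ factors together with one ``starting point'' arc from $\bar{\mathrm{C}}^{c}$) with $R^{\sma(q+1)}$, suitably quotiented by $C_{m}$, and the face/degeneracy structure matches the standard cyclic bar face and degeneracy maps (concatenation and insertion of units). The $S^{1}$-action on both sides is by rotation of $S^{1}/C_{m}$, identified via the standard $m$-fold edgewise subdivision with the cyclic operator on simplicial degree. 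The main technical obstacle will be matching the $S^{1}$-equivariance on the nose rather than merely up to homotopy: the compressed bar construction coequalizes $\Sigma_{n}\wr C_{m}$ whereas the cyclic bar construction only visibly coequalizes $Z_{n}$ with the $C_{m}$-action absorbed into $R^{\sma n}$, so pinning down the natural isomorphism requires careful bookkeeping of how the choice of basepoint arc in $\bar{\mathrm{C}}^{c}$ trivializes the $C_{m}$-quotient and realigns it with the edgewise-subdivided cyclic structure.
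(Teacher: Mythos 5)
Your outline has the right overall shape (rigidify, pass to a strictly associative model, identify with the cyclic bar construction), but the middle of the zigzag rests on a misreading of what the superscript $c$ must mean, and this is not a cosmetic issue. You take $\bar\bD^{c}\subset\bar\bD$ and $\bar{\mathrm{E}}^{c}\subset\bar{\mathrm{E}}$ to be \emph{restrictions} onto which one deformation retracts, and you assert that the associative operad "sits inside" the little $1$-disks operad. It does not: a point of $\oA(n)=\Sigma_{n}$ would have to correspond to an $n$-tuple of affine embeddings of the interval, with disjoint images, that is strictly closed under operadic composition, and composition of rescalings strictly shrinks radii, so no such suboperad exists. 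In the paper the arrow points the other way: $\oD^{c}$ is an \emph{enlargement} of $\oD_{\bR}$ obtained by allowing the radius parameter to equal $0$ (with the convention that overlapping images force both radii to be $0$), precisely so that $\oA\to\oD^{c}$ can be defined by sending $\sigma$ to the tuple of radius-zero disks at the origin; likewise $E(n)\hookrightarrow E^{c}(n)$ allows degenerate embeddings, and $\bar{\mathrm{C}}^{c}$ is the locus in $E^{c}$ where all radii vanish. Without this enlargement your third arrow $B(\bar{\mathrm{C}}^{c},\bT,R)\to B(\bar{\mathrm{E}}^{c},\bar\bD^{c},R)$ cannot be constructed, because there is no map of monads $\bT\to\bar\bD$ to induce it. A second, related gap: once radii are allowed to degenerate, center points on $S^{1}/C_{m}$ can collide and the cyclic ordering of a configuration is no longer determined by the points themselves; the paper must adjoin arc-length coordinates $\phi_{1},\dotsc,\phi_{n}$ (summing to $2\pi/m$) to $uE^{c}(n)$ to retain that data in the limit, and the proof that $E(n)\to E^{c}(n)$ is an equivariant homotopy equivalence is a genuine argument (a stratified retraction pushing the zero $\phi_{j}$'s away), not an evident linear retraction. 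Your "slide into standard position" retraction lives entirely in the nondegenerate locus and does not address this.

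Two smaller points. First, you invoke Theorem~\ref{thm:DEcomp} for the first equivalence, but that result is imported from the unpublished [CFH], and Section~\ref{sec:circle} is expressly independent of it; the paper instead proves the needed equivalence $E(n)\to\oE_{S^{1}/C_{m}}(n)$ directly (exponential map plus linear homotopies), which you could do as well. Second, for the final isomorphism your degreewise inspection is close in spirit to the paper's identification $uC^{c}(q+1)\iso\Lambda_{m}[q]\times_{C_{m(q+1)}}(Z_{q+1}\wr C_{m})$, but the paper then closes the argument cleanly by checking the natural map is an isomorphism on free algebras $\bT X$ and using that both functors preserve reflexive coequalizers — a route you may want to adopt, since it converts exactly the bookkeeping you flag as the "main technical obstacle" into a formal argument.
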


The functors of the form $B(-,-,R)$ are all monadic bar constructions,
which we explain in detal below.  In the first display, all the maps
are geometric realizations of maps that are natural
$S^{1}$-equivariant homotopy equivalences on each simplicial level
(the homotopy inverses and homotopy data is natural in $R$). See
Propositions~\ref{prop:zig1}, \ref{prop:zig2}, \ref{prop:zig3} below.
In both displays, $\bT$ denotes the free associative algebra monad (in
the category of $C_{m}$-equivariant orthogonal spectra) and in the
second display, $B(\bT,\bT,R)$ is the geometric realization of the
``standard construction'' or the two-sided bar construction.  The
$C_{m}$-equivariant associative ring spectrum $B(\bT,\bT,R)$ comes
with a natural map in the category of $C_{m}$-equivariant associative
ring spectra
\[
B(\bT,\bT,R)\to R
\]
which is a natural $C_{m}$-equivariant homotopy equivalence of the
underlying orthogonal spectra.  Because point-set change of universe
functors are topologically enriched and therefore preserve homotopy
equivalences, we get the following norm comparison as a corollary
(applying the previous theorem to $I_{C_{m}}^{\bR^{\infty}}A$ and
applying $I_{\bR^{\infty}}^{U}$ to the resulting zigzag).

\begin{cor}\label{cor:comparenorms}
Let $A$ be a $C_{m}$-equivariant associative ring orthogonal
spectrum indexed on $U_{C_{m}}$.  The relative norm
$N_{C_{m}}^{S^{1}}A$ of Definition~\ref{defn:relnorm} is naturally
$S^{1}$-equivariantly homotopy equivalent to the relative norm
$N_{C_{m}}^{S^{1}}B(\bT,\bT,A)$ of \cite[8.2]{ABGHLM}.
\end{cor}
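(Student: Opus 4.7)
The plan is to deduce the corollary directly from Theorem~\ref{thm:circle} in exactly the way indicated in the paragraph preceding it: apply the theorem to $R = I_{U_{C_{m}}}^{\bR^{\infty}} A$ (which is a $C_{m}$-equivariant associative ring orthogonal spectrum indexed on $\bR^{\infty}$, as required), and then apply the point-set change of universe functor $I_{\bR^{\infty}}^{U}$ to every object and map of the resulting zigzag.

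The first thing I would check is that $I_{\bR^{\infty}}^{U}$ preserves $S^{1}$-equivariant homotopy equivalences. This is automatic: $I_{\bR^{\infty}}^{U}$ is topologically enriched (in fact enriched in $S^{1}$-spaces), and any such functor carries a homotopy, i.e.\ a map out of $I_{+}\sma(-)$, to a homotopy of the same shape on the target. Concatenating with the natural isomorphism supplied by the theorem, we obtain a zigzag of natural $S^{1}$-equivariant homotopy equivalences in $\Spectra^{S^{1}}_{U}$ whose left end is $I_{\bR^{\infty}}^{U}\bar B_{C_{m}}(S^{1}/C_{m}, I_{U_{C_{m}}}^{\bR^{\infty}} A)$ and whose right end is $I_{\bR^{\infty}}^{U} N^{cyc,C_{m}}_{\sma} B(\bT,\bT, I_{U_{C_{m}}}^{\bR^{\infty}} A)$.

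Next I would identify the two ends. The left end is $N_{C_{m}}^{S^{1}} A$ by Definition~\ref{defn:relnorm}, since $G = S^{1}$, $H = C_{m}$, and $V = T_{eC_{m}}(S^{1}/C_{m}) = \bR$ (with $C_{m}$ acting trivially). For the right end I would use that $I_{U_{C_{m}}}^{\bR^{\infty}}$ is strong symmetric monoidal and commutes with all colimits (it is a left adjoint point-set equivalence of symmetric monoidal topologically enriched categories), and so commutes both with the free associative algebra monad $\bT$ and with the geometric realization of the simplicial object $B_{\subdot}(\bT,\bT,-)$. Hence there is a natural isomorphism
\[
I_{U_{C_{m}}}^{\bR^{\infty}} B(\bT,\bT, A) \iso B(\bT,\bT, I_{U_{C_{m}}}^{\bR^{\infty}} A).
\]
Substituting this into the right end identifies it with $I_{\bR^{\infty}}^{U} N^{cyc,C_{m}}_{\sma} I_{U_{C_{m}}}^{\bR^{\infty}} B(\bT,\bT, A)$, which is precisely $N_{C_{m}}^{S^{1}} B(\bT,\bT, A)$ in the sense of \cite[8.2]{ABGHLM}.

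The only real content beyond formal rewriting is the two compatibilities just invoked, namely that $I_{\bR^{\infty}}^{U}$ preserves homotopy equivalences and that $I_{U_{C_{m}}}^{\bR^{\infty}}$ commutes with $B(\bT,\bT,-)$. Both follow from general properties of the point-set change of universe functors reviewed in Section~\ref{sec:univ} (they are topologically enriched equivalences of symmetric monoidal categories), so the main obstacle is simply the bookkeeping of universes rather than any substantive homotopical argument. Once these observations are in place, the zigzag of Theorem~\ref{thm:circle} mapped through $I_{\bR^{\infty}}^{U}$ yields the stated natural $S^{1}$-equivariant homotopy equivalence between $N_{C_{m}}^{S^{1}} A$ and $N_{C_{m}}^{S^{1}} B(\bT,\bT, A)$.
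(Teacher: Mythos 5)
Your proposal is correct and is essentially the paper's own argument: the paper deduces the corollary by applying Theorem~\ref{thm:circle} to $I_{U_{C_m}}^{\bR^{\infty}}A$ and then applying $I_{\bR^{\infty}}^{U}$ to the resulting zigzag, using exactly the fact that point-set change of universe functors are topologically enriched and hence preserve homotopy equivalences. The only difference is that you spell out the (routine but worth noting) identification $I_{U_{C_m}}^{\bR^{\infty}}B(\bT,\bT,A)\iso B(\bT,\bT,I_{U_{C_m}}^{\bR^{\infty}}A)$, which the paper leaves implicit.
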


The relative norm of~\cite[8.2]{ABGHLM} is known to preserve the weak
equivalences used there (the ``$\aF$-equivalences'') under mild
hypotheses on its input. For example, it is good enough if the
underlying $C_{m}$-equivariant orthogonal spectrum indexed on
$U_{C_{m}}$ is cofibrant, and we note that if $A$ satisfies this, then
so does $B(\bT,\bT,A)$.  In this case then, we get a weak equivalence
between both relative norms on $A$.

We note that in the case $m=1$, the theorem and corollary above give
an equivariant comparison between factorization homology and $THH$.

We now begin the proof of the theorem.  For the first display, the
argument amounts to little more than defining terms.  The functor
$\bar{\mathrm E}$ in the statement is a simplification of
$\bar\bE_{S^{1}/C_{m}}$ along the lines that the little disk operad is
a simplification of the disk embedding spaces.  

\begin{cons}\label{cons:E}
Let
$E(1)$ be the set of ordered pairs $(\zeta,r)$ with $\zeta \in S^{1}$
and $r\in (0,\pi/m]$. Such an ordered pair specifies an element of
$\oE_{S^{1}/C_{m}}(1)$ where
\begin{itemize}
\item The embedding $f\colon D\to S^{1}/C_{m}$ is the map $t\mapsto
[e^{irt}\zeta]$ (writing $[\alpha]$ for the image in $S^{1}/C_{m}$ of
an 
element $\alpha \in S^{1}$, and thinking of $S^{1}$ as the unit
complex numbers). 
\item The map of $C_{m}$-frame bundles $F_{C_{m}}D\to
f^{*}F_{C_{m}}S^{1}/C_{m}$ is determined by the section $t\mapsto
e^{irt}\zeta$.
\item The map of $C_{m}$-frame bundles lies over the map of frame
bundles that is the identity (under the identification of the tangent
space of $S^{1}/C_{m}$ with $\bR$ given above) and the derivative is
multiplication by $r$ in the fiber of each point $t\in D$; we use the
homotopy $s\mapsto r^{s}$.
\end{itemize}
We let $E(n)$ be the subspace of $E(1)^{n}$ where the images of the
embeddings do not overlap.  We then get an inclusion $E(n)\to
\oE_{S^{1}/C_{m}}(n)$. We write $E$ for the collection $E(n)$, $n\geq 0$.
\end{cons}

The following is clear from construction.

\begin{prop}
The right $\oD^{\bR}_{C_{m}}$-action on $\oE_{S^{1}/C_{m}}$ restricts
to define a right $\oD^{\bR}_{C_{m}}$-action on $E$.
\end{prop}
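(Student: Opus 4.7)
The plan is to verify directly from the definitions that the right action of $\oD^{\bR}_{C_{m}}$ on $\oE_{S^{1}/C_{m}}$ restricts to $E$. Since both sides are built from explicit combinatorial data and the map of operads $\oD^{\bR}_{C_{m}}\to \oEnd_{\amalg}(D)$ has been given in closed form, the entire argument reduces to composing two $C_{m}$-framed embeddings and recognizing the result as an element of the form singled out in Construction~\ref{cons:E}. A crucial simplification is that $C_{m}$ acts trivially on $\bR=T_{eC_{m}}(S^{1}/C_{m})$, so the affine part $\lambda$ of an element $(\lambda,h)\in \oD^{\bR}_{C_{m}}(1)$ has the form $\lambda(v)=v_{0}+r'v$ with no group twist.

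The first step is the unary case. Fix $(\zeta,r)\in E(1)$ and $(\lambda,h)\in \oD^{\bR}_{C_{m}}(1)$ with $\lambda(v)=v_{0}+r'v$, and compute the composite $C_{m}$-framed embedding $D\to D\to S^{1}/C_{m}$. Its underlying smooth map is $t\mapsto [e^{irv_{0}}\zeta\cdot e^{irr't}]$, which already has the form specified in Construction~\ref{cons:E} with parameters $\zeta''=h\cdot e^{irv_{0}}\zeta\in S^{1}$ and $r''=rr'$. Since $r\in(0,\pi/m]$ and $r'\in(0,1]$, we have $r''\in(0,\pi/m]$ as required.

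Next I would check that the framing data also match. The $C_{m}$-frame lift prescribed by $(\lambda,h)$ is the constant section $h$, while that of $(\zeta,r)$ is the section $t\mapsto e^{irt}\zeta\in S^{1}$; composing these through the identification $f^{*}F_{C_{m}}(S^{1}/C_{m})$ yields the section $t\mapsto (h\cdot e^{irv_{0}}\zeta)\cdot e^{irr't}$, which is precisely the section associated to $(\zeta'',r'')\in E(1)$. Likewise, the $\GL(\bR)$-bundle homotopies compose to $s\mapsto r^{s}(r')^{s}=(rr')^{s}$, again matching Construction~\ref{cons:E}. So the composite lies in $E(1)$.

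The general case is then formal: the composition $E(n)\times \oD^{\bR}_{C_{m}}(j_{1})\times\cdots\times \oD^{\bR}_{C_{m}}(j_{n})\to \oE_{S^{1}/C_{m}}(j)$ acts componentwise, so each of the $j$ resulting embeddings of $D$ into $S^{1}/C_{m}$ is in $E(1)$ by the unary case, and disjointness of images in $S^{1}/C_{m}$ is inherited from the disjointness of the outer embeddings $(\zeta_{i},r_{i})$ together with the disjointness of the inner embeddings inside each outer disk. I expect the only real obstacle to be bookkeeping: matching the left vs.\ right conventions for the $C_{m}$-action on the principal bundle $S^{1}\to S^{1}/C_{m}$ with the conventions of Definition~\ref{defn:lilVdisk} and the map $\oD^{V}_{H}(n)\to \aE_{H}(D\times\{1,\dotsc,n\},D)$ described just above. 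No further input beyond the construction of $E$ is needed.
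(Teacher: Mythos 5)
Your verification is correct and matches the paper's (implicit) argument: the paper offers nothing beyond ``clear from construction,'' and your unary computation --- identifying the composite as $(\zeta'',r'')$ with $\zeta''=h\cdot e^{irv_0}\zeta$ and $r''=rr'\in(0,\pi/m]$, using that $C_m$ acts trivially on $\bR$ so the affine part is untwisted, and checking the section and the homotopy $s\mapsto (rr')^s$ --- together with the componentwise reduction for $n$-ary inputs is exactly the direct check being asserted.
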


The map $E(n)\to \oE_{S^{1}/C_{m}}(n)$ is $S^{1}\times (\Sigma_{n}\wr
C_{m})^{\op}$-equivariant (equivariant for both the left action of $S^{1}$
and the right action of $\Sigma_{n}\wr C_{m}$).
Since $C_{n}\to \GL(\bR)$ is the trivial map, all embeddings in
$\oE_{S^{1}/C_{m}}(n)$ are oriented. The exponential map from
$\bR$ to the oriented transformations in $\GL(\bR)$ is an isomorphism,
and the following proposition is then easy using linear homotopies.

\begin{prop}\label{prop:EtooE}
For each $n$, the inclusion of $E(n)$ in $\oE_{S^{1}/C_{m}}(n)$ is a
$S^{1}\times (\Sigma_{n}\wr C_{m})^{\op}$-equivariant homotopy equivalence.
\end{prop}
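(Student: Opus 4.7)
The plan is to build an $S^{1}\times (\Sigma_{n}\wr C_{m})^{\op}$-equivariant strong deformation retraction of $\oE_{S^{1}/C_{m}}(n)$ onto $E(n)$ in two stages, using that both $\tilde f_{j}\colon D\to S^{1}$ and the homotopy $\phi_{j}$ can be straightened via the exponential diffeomorphism $\bR\iso \GL(\bR)^{+}$. Parametrize an element of $\oE_{S^{1}/C_{m}}(n)$ as a tuple $((\tilde f_{j},\phi_{j}))_{j=1}^{n}$, where $\tilde f_{j}\colon D\to S^{1}$ is the smooth lift recording the $C_{m}$-frame section and $\phi_{j}\colon D\times I\to \GL(\bR)$ is the bundle homotopy. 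Since $\rho\colon C_{m}\to \GL(\bR)$ is trivial, the map induced by the $C_{m}$-framing is $1\in \GL(\bR)^{+}$ pointwise, so $\phi_{j}$ is a path from $1$ to $\tilde f_{j}'(t)$; the existence of such a path forces $\tilde f_{j}'>0$ and $\phi_{j}$ to lie in $\GL(\bR)^{+}$. The disjointness hypothesis forces the lifted arc $\tilde f_{j}(D)\subset S^{1}$ to have length at most $2\pi/m$.

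Stage one: keeping $\tilde f_{j}$ fixed, straighten $\phi_{j}$ to the standard logarithmic path via
\[
\phi_{j,u}(t,s):=\exp\bigl((1-u)\log\phi_{j}(t,s)+u\cdot s\log \tilde f_{j}'(t)\bigr),\qquad u\in [0,1],
\]
which stays in $\GL(\bR)^{+}$, preserves the $s$-endpoints ($s=0$ gives $1$, $s=1$ gives $\tilde f_{j}'(t)$), and ends at $\phi_{j}^{\mathrm{std}}(t,s):=\tilde f_{j}'(t)^{s}$. Stage two: holding the standard form for $\phi_{j}$, straighten $\tilde f_{j}$. Locally write $\tilde f_{j}=e^{i\theta_{j}}$ for a strictly increasing $\theta_{j}\colon D\to \bR$, let $\bar a_{j}$ and $r_{j}\in (0,\pi/m]$ denote the midpoint and half-length of the interval $\theta_{j}(D)\subset \bR$, and deform by
\[
\theta_{j,u}(t):=(1-u)\theta_{j}(t)+u(\bar a_{j}+r_{j}t),\qquad \phi_{j,u}^{\mathrm{std}}(t,s):=\theta_{j,u}'(t)^{s}.
\]
The derivative $(1-u)\theta_{j}'(t)+u r_{j}$ is strictly positive, so each $\theta_{j,u}$ is a strictly increasing reparametrization of the interval $\theta_{j}(D)$; in particular the image arc $\tilde f_{j,u}(D)\subset S^{1}$ is independent of $u$, and the disjointness condition in $S^{1}/C_{m}$ is preserved throughout. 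At $u=1$ one has $\tilde f_{j,1}(t)=e^{ir_{j}t}\cdot e^{i\bar a_{j}}$ and $\phi_{j,1}^{\mathrm{std}}(t,s)=r_{j}^{s}$, i.e., the element of $E(n)$ with parameters $(\zeta_{j},r_{j})=(e^{i\bar a_{j}},r_{j})$.

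For equivariance: the left $S^{1}$-action by $\alpha$ translates each $\theta_{j}$ by $\arg\alpha$, the right $C_{m}^{n}$-action via $(h_{1},\dotsc,h_{n})$ translates $\theta_{j}$ by $\arg h_{j}$, and the $\Sigma_{n}$-action permutes the factors. Every displayed formula is invariant in the appropriate sense under adding a constant to an individual $\theta_{j}$ (both sides of each formula shift by the same constant) and is symmetric in the indices, so the concatenated homotopy is $S^{1}\times (\Sigma_{n}\wr C_{m})^{\op}$-equivariant and restricts to the identity on $E(n)$.

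The main technical point to pin down is that $\theta_{j}$ is only defined locally on the parameter space, since the lift of $\tilde f_{j}\colon D\to S^{1}$ to the universal cover $\bR$ is only defined up to addition of $2\pi\bZ$; one must check the deformation descends to a well-defined continuous deformation of $\tilde f_{j}$ itself. This is immediate from the observation that shifting $\theta_{j}$ by $2\pi k$ shifts $\bar a_{j}$ by $2\pi k$, leaves $r_{j}$ unchanged, and shifts $\theta_{j,u}$ by $2\pi k$, so $\tilde f_{j,u}=e^{i\theta_{j,u}}$ and $\phi_{j,u}^{\mathrm{std}}(t,s)=\theta_{j,u}'(t)^{s}$ are independent of the choice of lift. (The quantities $\bar a_{j}$ and $r_{j}$ are read off from the limits of $\tilde f_{j}$ at $t\to \pm 1$, which are well-defined because the arc $\tilde f_{j}(D)$ has bounded length.)
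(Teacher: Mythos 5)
Your stage-one homotopy (straightening the bundle homotopy $\phi_j$ by a linear homotopy in logarithmic coordinates) is correct and is exactly the sort of ``linear homotopy in exponential coordinates'' the paper's sketch has in mind. The gap is in stage two: the parameters $\bar a_j$ and $r_j$ are the midpoint and half-length of the interval $\theta_j(D)$, i.e.\ they are read off from $\lim_{t\to\pm1}\theta_j(t)$, and these limits are \emph{not} continuous functions on the embedding space. The mapping spaces of $\aE_{C_m}$ are topologized by the (k-ified) smooth compact-open topology on smooth embeddings, which only controls an embedding on compact subsets of the open disk $D$ and says nothing about its behavior near the ends of $D$. Concretely, for $n=1$ choose framed embeddings whose angle functions $\theta_k$ agree with a fixed $\theta$ on $[-1+1/k,\,1-1/k]$ but are modified near the ends so that their images are arcs of a different length (still shorter than $2\pi/m$), with the standard homotopy data $\phi_k(t,s)=\theta_k'(t)^{s}$. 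Then $\theta_k\to\theta$ in $\oE_{S^{1}/C_{m}}(1)$, but $(\bar a_k,r_k)$ does not converge to $(\bar a,r)$, so your time-$u$ maps for $u>0$, and in particular the retraction at $u=1$, are discontinuous at $\theta$. The k-ification does not rescue this, since a convergent sequence together with its limit is compact. So the proposed strong deformation retraction is not a continuous map, and the argument as written does not establish the (equivariant) homotopy equivalence.

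The repair is to use only data that is continuous in the compact-open topology --- for instance the value and derivative of each framed embedding at the center of its disk, together with precomposition/shrinking homotopies of the source disks --- rather than anything depending on the closure of the image. The standard argument of this type compares both $E(n)$ and $\oE_{S^{1}/C_{m}}(n)$, equivariantly, to the $C_m$-framed configuration space of $n$ points in $S^{1}/C_{m}$ via evaluation at centers, and shows both evaluation maps are $S^{1}\times(\Sigma_{n}\wr C_{m})^{\op}$-equivariant homotopy equivalences; your stage one slots into such an argument unchanged. The remaining points in your write-up (endpoint checks for the homotopies, equivariance, independence of the choice of lift of $\theta_j$, and that the $u=1$ output lands in $E(n)$ with the disjointness preserved) are fine.
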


We define the functor $\bar{\mathrm E}$ from $C_{m}$-equivariant
orthogonal spectra to $S^{1}$-equivariant orthogonal spectra as in
Construction~\ref{cons:compressed}: let 
\[
\bar{\mathrm E}X=\bigvee_{n\geq 0}E(n)_{+}\sma_{\Sigma_{n}\wr C_{m}}X^{(n)}.
\]
We then get a monadic bar construction $B(\bar{\mathrm E},\bar\bD,-)$
with input $C_{m}$-equivariant $\oD_{\bR}$-algebras in orthogonal
spectra and output $S^{1}$-equivariant orthogonal spectra.  The map of
right $\oD^{\bR}_{C_{m}}$-spaces 
$E\to \oE_{S^{1}/C_{m}}$ induces a map of bar constructions, and
Proposition~\ref{prop:EtooE} then implies the following proposition.

\begin{prop}\label{prop:zig1}
For any $C_{m}$-equivariant $\oD_{\bR}$-algebra $X$, the map of
monadic bar constructions
\[
B\subdot(\bar{\mathrm{E}},\bar \bD,X)\to 
B\subdot(\oE_{S^{1}/C_{m}},\bar \bD,X)
\]
is on each level a natural $S^{1}$-equivariant homotopy equivalence. 
\end{prop}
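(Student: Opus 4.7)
The plan is to reduce the proposition to Proposition~\ref{prop:EtooE} level by level, observing that the equivariance provided there is strong enough to pass through the coinvariant smash product. Fix the simplicial degree $q$ and set $Y = \bar\bD^{q}X$, which carries a $C_{m}$-action coming from the $C_{m}$-action on $X$ and the $C_{m}$-equivariance of $\bar\bD$. Unwinding the definitions of $\bar{\mathrm{E}}$ and $\bar\bE_{S^{1}/C_{m}}$, the map at simplicial level $q$ is the wedge over $n\geq 0$ of the maps
\[
E(n)_{+}\sma_{\Sigma_{n}\wr C_{m}}Y^{(n)} \longrightarrow \oE_{S^{1}/C_{m}}(n)_{+}\sma_{\Sigma_{n}\wr C_{m}}Y^{(n)}
\]
induced by the space-level inclusion $E(n)\hookrightarrow \oE_{S^{1}/C_{m}}(n)$. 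The left $S^{1}$-action on either side arises entirely from the left $S^{1}$-action on $E(n)$ and $\oE_{S^{1}/C_{m}}(n)$, and this action commutes with the right $\Sigma_{n}\wr C_{m}$-action used in forming the coinvariant smash product.

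By Proposition~\ref{prop:EtooE}, the inclusion $E(n)\to \oE_{S^{1}/C_{m}}(n)$ is an $S^{1}\times (\Sigma_{n}\wr C_{m})^{\op}$-equivariant homotopy equivalence, so it comes equipped with a homotopy inverse $g_{n}$ and homotopies $h_{n},h'_{n}$, each of which is $\Sigma_{n}\wr C_{m}$-equivariant on the right and $S^{1}$-equivariant on the left. Since the construction $(-)_{+}\sma_{\Sigma_{n}\wr C_{m}}Y^{(n)}$ is a topologically enriched functor on right $\Sigma_{n}\wr C_{m}$-spaces, the equivariant maps $g_{n}$ and homotopies $h_{n},h'_{n}$ descend to provide an explicit $S^{1}$-equivariant homotopy inverse and $S^{1}$-equivariant homotopies for the displayed map.

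Taking the wedge over $n$ gives the asserted $S^{1}$-equivariant homotopy equivalence at each simplicial level, and naturality in $X$ is immediate from construction, since all of the structure maps and homotopies are built out of $g_{n},h_{n},h'_{n}$ together with the identity on $Y^{(n)}$. The only substantive input is Proposition~\ref{prop:EtooE}; the remainder is a formal consequence of the enriched functoriality of the coinvariant smash product, so once~\ref{prop:EtooE} is in hand there is no real obstacle. (We note that one does not need to realize the simplicial object: the claim of the proposition is levelwise.)
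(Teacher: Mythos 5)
Your argument is correct and is exactly the paper's route: the paper simply asserts that Proposition~\ref{prop:EtooE} implies the levelwise statement, and your writeup supplies the (standard) details of why the $S^{1}\times(\Sigma_{n}\wr C_{m})^{\op}$-equivariant homotopy inverse and homotopies descend through the enriched functor $(-)_{+}\sma_{\Sigma_{n}\wr C_{m}}Y^{(n)}$ with $Y=\bar\bD^{q}X$. No gaps.
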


The idea for $\bar{\mathrm{E}}^{c}$ and $\bar\bD^{c}$ is to expand $E$
and $\oD_{\bR}$ to allow the radius of the disk images to go to zero,
while retaining the correct overall homotopy type for these embedding
spaces.  This is easiest to first define in the non-symmetric context
and then put the symmetries back in.  To do this, let $u\oD_{\bR}(n)$
be the subspace of $\oD_{\bR}(n)$ consisting of those $n$-tuples
$(\lambda_{1},\dotsc,\lambda_{n})$ such that
\[
\lambda_{1}(0)<\dotsb<\lambda_{n}(0);
\]
then $\oD_{\bR}(n)\iso u\oD_{\bR}\times \Sigma_{n}$.  The operadic
composition map preserves these components (making $u\oD_{\bR}$ a
``non-$\Sigma$ operad'') and the isomorphism $\oD_{\bR}(-)\iso
u\oD_{\bR}(-)\times \Sigma_{-}$ is an isomorphism of operads where the operadic
composition (and permutation action) on the symmetric groups is the
standard one, defining the operad for associative monoids $\oA$.
We extend this operad as follows.

\begin{cons}
Let $u\oD^{c}(1)$ be the subspace of points $(v,r)\in \bR\times [0,1]$
such that the affine transformation 
\[
\lambda \colon t\mapsto v+rt
\]
sends $D=[-1,1]\subset \bR$ into $D$.  Let $u\oD^{c}(n)$ be the subset
of $(u\oD^{c}(1))^{n}$ consisting of those $n$-tuples
$((v_{1},r_{1}),\dotsc,(v_{n},r_{n}))$ that satisfy
\[
v_{1}\leq \dotsb \leq v_{n}
\]
and whenever $v_{j}+r_{j}>v_{j+1}-r_{j+1}$ we have $r_{j}=r_{j+1}=0$;
in terms of the affine transformations, whenever the images of two
overlap, they are both constant to the same point.
Let $\oD^{c}(n)=u\oD^{c}(n)\times \Sigma_{n}$, with the operadic
multiplication induced by diagonal composition and block sum of
permutations:  composition takes the element corresponding to
\[
(((\lambda_{1},\dotsc,\lambda_{n}),\sigma),((\mu_{1,1},\dotsc),\tau_{1}),\dotsc,((\dotsc,\mu_{n,j_{n}}),\tau_{n}))
\]
in $\oD^{c}(n)\times \oD^{c}(j_{1})\times \dotsb\times \oD^{c}(j_{n})$
to the element corresponding to
\[
((\lambda_{1}\circ \mu_{\sigma^{-1}(1),1},\dotsc,\lambda_{1}\circ \mu_{\sigma^{-1}(1),j_{\sigma^{-1}(1)}},\dotsc,
\lambda_{n}\circ \mu_{\sigma^{-1}(n),j_{\sigma^{-1}(n)}}),\sigma \circ (\tau_{1}\oplus \dotsb \oplus \tau_{n}))
\]
in $\oD^{c}(j)$. 
\end{cons}

The inclusion of $\oD_{\bR}$ in $\oD^{c}$ is a map of operads, and
we also get a map of operads $\oA\to \oD^{c}$ from the operad
$\oA$ for associative monoids: the map 
\[
\oA(n)=\Sigma_{n}\to u\oD^{c}(n)\times \Sigma_{n}=\oD^{c}(n)
\]
sends $\sigma\in \Sigma_{n}$ to $((0,0),\dotsc,(0,0),\sigma)$.
Because $u\oD_{\bR}(n)$ and $u\oD^{c}(n)$ are contractible for all
$n$, we get the following proposition. 

\begin{prop}\label{prop:DDcA}
The maps of operads $\oD_{\bR}\to \oD^{c}\from \oA$ induce
$\Sigma_{n}$-equivariant homotopy equivalences
\[
\oD_{\bR}(n)\to \oD^{c}(n)\from \oA(n)
\]
for all $n$.
\end{prop}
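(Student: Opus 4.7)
The plan is to exploit the product decompositions $\oD_{\bR}(n)\iso u\oD_{\bR}(n)\times \Sigma_{n}$, $\oD^{c}(n)=u\oD^{c}(n)\times \Sigma_{n}$, and $\oA(n)=*\times \Sigma_{n}$ to reduce the claim to a statement about the non-symmetric parts. Under these decompositions the right $\Sigma_{n}$-action is free, acting trivially on the first factor and by right multiplication on $\Sigma_{n}$. First I would check that both maps in the proposition have the form $f\times \id_{\Sigma_{n}}$: the inclusion $\oD_{\bR}(n)\hookrightarrow \oD^{c}(n)$ is the inclusion $u\oD_{\bR}(n)\hookrightarrow u\oD^{c}(n)$ crossed with $\id_{\Sigma_{n}}$, and the map $\oA(n)\to \oD^{c}(n)$ sending $\sigma\mapsto ((0,0),\dotsc,(0,0),\sigma)$ is the constant map $*\to u\oD^{c}(n)$ at the basepoint $((0,0),\dotsc,(0,0))$ crossed with $\id_{\Sigma_{n}}$. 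A map of the form $f\times \id_{\Sigma_{n}}$ between free $\Sigma_{n}$-spaces of this product form is a $\Sigma_{n}$-equivariant homotopy equivalence whenever $f$ is a non-equivariant homotopy equivalence (just cross the non-equivariant homotopy inverse and homotopy data with $\id_{\Sigma_{n}}$), so it suffices to prove that $u\oD_{\bR}(n)$ and $u\oD^{c}(n)$ are contractible.

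For $u\oD_{\bR}(n)$ the verification is immediate. In coordinates $(v_{i},r_{i})_{i=1}^{n}$, the defining constraints $r_{i}>0$, $-1\le v_{1}-r_{1}$, $v_{n}+r_{n}\le 1$, and $v_{i}+r_{i}\le v_{i+1}-r_{i+1}$ are all linear half-space conditions, so $u\oD_{\bR}(n)$ is a non-empty convex subset of $\bR^{2n}$ and any straight-line homotopy to an interior basepoint contracts it.

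The main point is the contractibility of $u\oD^{c}(n)$, where the piecewise phrasing of the overlap condition --- whenever $v_{j}+r_{j}>v_{j+1}-r_{j+1}$ we have $r_{j}=r_{j+1}=0$ --- appears non-convex at first glance. My approach is to exhibit an explicit deformation retraction to the origin via the straight-line homotopy
\[
((v_{i},r_{i}))_{i=1}^{n}\longmapsto ((1-s)v_{i},(1-s)r_{i})_{i=1}^{n},\qquad s\in [0,1],
\]
and to check that it stays inside $u\oD^{c}(n)$ at every $s$. The ordering $v_{i}\le v_{i+1}$ and the containment $v_{i}\pm r_{i}\in [-1,1]$ are both preserved because $1-s\ge 0$ and all coordinates scale uniformly. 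For the overlap condition, on each consecutive pair either the non-overlap inequality $v_{j}+r_{j}\le v_{j+1}-r_{j+1}$ held originally, in which case it scales correctly to $(1-s)v_{j}+(1-s)r_{j}\le (1-s)v_{j+1}-(1-s)r_{j+1}$, or the degenerate condition $r_{j}=r_{j+1}=0$ held, in which case it is preserved verbatim by uniform scaling. Either way the overlap condition continues to hold, so the homotopy lives in $u\oD^{c}(n)$ and establishes contractibility; combined with the reduction of the first paragraph, this completes the proof.
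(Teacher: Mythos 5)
Your argument is correct and is exactly the route the paper takes: the proposition is deduced from the decompositions $\oD_{\bR}(n)\iso u\oD_{\bR}(n)\times\Sigma_{n}$, $\oD^{c}(n)=u\oD^{c}(n)\times\Sigma_{n}$, $\oA(n)=\Sigma_{n}$ together with the contractibility of $u\oD_{\bR}(n)$ and $u\oD^{c}(n)$, which the paper merely asserts. Your convexity observation and the uniform scaling homotopy (with the case check on the overlap condition) supply precisely the details the paper omits.
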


We can do something similar for $E(n)$: let $uE(n)$ be the subspace of
$E(n)$ of those $n$-tuples
$((\zeta_{1},r_{1}),\dotsc,(\zeta_{n},r_{n}))$ such that for each $i$,
the counterclockwise path from $[\zeta_{j}]$ to $[\zeta_{j+1}]$ does
not pass though any of the other $[\zeta_{k}]$, that is, starting with
$[\zeta_{1}]$, the points $[\zeta_{j}]$ are numbered in strictly
counterclockwise order. This has a canonical action of
$Z_{n}<\Sigma_{n}$ (for $Z_{n}=\langle (1\dotsb n)\rangle$) and
$uE(n)$ is an $S^{1}\times (Z_{n}\wr C_{m})^{\op}$-subspace of $E(n)$.
Moreover, $E(n)$ is isomorphic as an $S^{1}\times (\Sigma_{n}\wr
C_{m})^{\op}$-space to $uE(n)\times_{C_{n}}\Sigma_{n}$.  The right
action of $\oD^{\bR}_{C_{m}}$ on $E$ restricts to give maps of the
form
\[
uE(n) \times u\oD_{\bR}(j_{1})\times \dotsb \times uD_{\bR}(j_{n})
\to uE(j)
\]
which are $S^{1}\times (C_{m}^{n})^{\op}$-equivariant where $C_{m}^{n}$ acts
by diagonal blocks on the right.  Writing $u\oD^{\bR}_{C_{m}}$ for the
non-$\Sigma$ version of $\oD_{C_{m}}^{\bR}$, $u\oD_{\bR}\rtimes
C_{m}$, the corresponding action map
\[
uE(n) \times u\oD^{\bR}_{C_{m}}(j_{1})\times \dotsb \times uD^{\bR}_{C_{m}}(j_{n})
\to uE(j)
\]
is $S^{1}\times (C_{m}^{j})^{\op}$-equivariant.  We have a further
cyclic $Z_{n}$ invariance of the following form: for 
\[
(f,g_{1},\dotsc,g_{n})\in uE(n) \times 
u\oD_{\bR}(j_{1})\times \dotsb \times uD_{\bR}(j_{n})
\to uE(j),
\]
$\alpha \in Z_{n}$, and $\alpha_{j_{1},\dotsc,j_{n}}\in \Sigma_{j}$ the
permutation that cycles the blocks
$j_{1},\dotsc,j_{n}$ by $\alpha$, the following compositions are equal:
\[
(f\alpha)\circ (g_{1},\dotsc,g_{n})
=(f\circ (g_{\alpha^{-1}(1)},\dotsc,g_{\alpha^{-1}(n)}))\alpha_{j_{1},\dotsc,j_{n}}
\in uE(j)
\]
We note that
$\alpha_{j_{1},\dotsc,j_{n}}\in Z_{j}<\Sigma_{j}$; for example, if
$\alpha$ is the cycle $(1\dotsb n)$ then $\alpha_{j_{1},\dotsc,j_{n}}$
is $(1\dotsb j)^{j_{n}}$.  Adding the full symmetric group
symmetries back in, these action maps induce the
$\oD^{\bR}_{C_{m}}$-action maps.

The $Z_{n}$-action on $uE(n)$ adds an extra complication to
constructing the extension $uE^{c}(n)$.  Since $Z_{1}$ is the trivial
group, no issues arise at the $1$-ary level, and we can take
$uE^{c}(1)=E^{c}(1)$ to be the set or ordered pairs $(\zeta,r)$ with
$\zeta \in S^{1}$ and $r\in [0,\pi/m]$.  The problem arises at the
$2$-level: consider the elements
\[
((1,r),(e^{2ir},r)),\quad ((e^{2ir},r),(1,r))\quad \in uE(2).
\]
As $r$ goes to zero, these need to converge to different elements of
$uE^{c}(2)$, and so we cannot just take $uE^{c}(2)$ to be the obvious
subspace of $uE^{c}(1)^{2}$.  Instead, we note that when $r$ is small,
the center points $1$ and $e^{2ir}$ are close together and we can
interpret the point $1$ as being ``first'' in the counter-clockwise
order; we can identify it as first because traveling only
counter-clockwise, most of the circle has to be traversed to reach it
from the other point.  We use this idea to redefine $uE(n)$ in a way
that we extends to allow the size of
the disk images to be zero.

Define $\theta_{j}\colon C(n,S^{1}/C_{m})\to (0,2\pi/m)$ to be the
(continuous) function that takes a configuration
$(x_{1},\dotsc,x_{n})$ to the length of the counter-clockwise arc from
$x_{j}$ to $x_{j+1}$ (for $j<n$) or from $x_{n}$ to $x_{1}$ (for
$j=n$).  We
define $\theta_{j}$ on $E(n)$ and $uE(n)$ using the \term{center point
map} $E(n)\to C(n,S^{1}/C_{m})$ that takes an element
$((\zeta_{1},r_{1}),\dotsc,(\zeta_{n},r_{n}))$ to the configuration
given by the center points of the disk images
$([\zeta_{1}],\dotsc,[\zeta_{n}])$.  Since for elements of $uE(n)$,
the center points occur cyclically in the counter-clockwise direction, 
we have that the sum of the lengths always adds up to the
circumference of the circle,
\[
\theta_{1}|_{uE(n)}+\dotsb+\theta_{n}|_{uE(n)} \equiv 2\pi/m.
\]
Now let $uE'(n)$ be the subspace of $E(1)^{n}\times (0,2\pi/m)^{n}$
consisting of the points 
\[
((\zeta_{1},r_{1}),\dotsc,(\zeta_{n},r_{n})),(\phi_{1},\dotsc,\phi_{n}))
\]
such that
\begin{itemize}
\item Starting at $[\zeta_{1}]$, the points
$[\zeta_{1}],\dotsc,[\zeta_{n}]$ occur in counter-clockwise order;
\item The intervals $t\mapsto [e^{\pi i r_{j}t}\zeta_{j}]$, $t\in (-1,1)$ do not overlap;
\item $[\zeta_{j+1}]=[e^{\pi i \phi_{j}}\zeta_{j}]$ for
$j=0,\dotsc,n-1$ where $\zeta_{0}:=\zeta_{n}$ and
$\phi_{0}:=\phi_{n}$; and
\item $\phi_{1}+\dotsb+\phi_{n}=2\pi/m$.
\end{itemize}
Then the projection $uE'(n)\to uE(n)$ and the map $uE(n)\to uE'(n)$
given by the inclusion and $\theta_{1},\dotsc,\theta_{n}$ 
are inverse homeomorphisms.  Moreover, if we let $Z_{n}$ act on
$(0,2\pi/m)^{n}$ by permuting coordinates (and let $S^{1}$ and $C_{m}$
act trivially on $(0,2\pi/m)^{n}$), then these homeomorphisms are
$S^{1}\times (Z_{n}\wr C_{m})^{\op}$-equivariant.  The effect of the
action of $u\oD_{\bR}$ on the new $\phi$ coordinates is straight-forward
but tedious to describe; to avoid unnecessary redundancy, we just
write it out for the extension $uE^{c}$ below.

\begin{cons}
Let $uE^{c}(0)$ be a point and $uE^{c}(1)$ the set or ordered pairs $(\zeta,r)$ with
$\zeta \in S^{1}$ and $r\in [0,\pi/m]$. For $n>1$, 
let $uE'(n)$ be the subspace of $E^{c}(1)^{n}\times [0,2\pi/m]^{n}$
consisting of the points 
\[
((\zeta_{1},r_{1}),\dotsc,(\zeta_{n},r_{n})),(\phi_{1},\dotsc,\phi_{n}))
\]
such that
\begin{itemize}
\item Starting at $[\zeta_{1}]$, the points
$[\zeta_{1}],\dotsc,[\zeta_{n}]$ occur in counter-clockwise order;
\item If for some $j<k\in \{1,\dotsc,n\}$ the intervals $t\mapsto
[e^{\pi i r_{j}t}\zeta_{j}]$ and $t\mapsto [e^{\pi i r_{k}t}\zeta_{k}]$, $t\in
(-1,1)$ overlap, then $r_{j}=r_{k}=0$;
\item $[\zeta_{j+1}]=[e^{\pi i \phi_{j}}\zeta_{j}]$ for
$j=0,\dotsc,n-1$ where $\zeta_{0}:=\zeta_{n}$ and
$\phi_{0}:=\phi_{n}$; and
\item $\phi_{1}+\dotsb+\phi_{n}=2\pi/m$.
\end{itemize}
We have an $S^{1}\times (Z_{n}\wr C_{m})^{\op}$-action with the left $S^{1}$
action diagonally on the $\zeta_{j}$, the right $C_{m}$ actions
individually on the $\zeta_{j}$, and the $Z_{n}$ action permuting the
indexes on the $\zeta_{j}$, $r_{j}$, and $\phi_{j}$.  

We define the action map
\[
uE^{c}(n)\times u\oD^{c}(j_{1})\times \dotsb \times u\oD^{c}(j_{n})
\to uE^{c}(j)
\]
($j=j_{1}+\dotsb+j_{n}$) as follows.  For 
\begin{gather*}
((\zeta_{1},r_{1}),\dotsc,(\zeta_{n},r_{n}),(\phi_{1},\dotsc,\phi_{n}))\in uE^{c},\\
((v_{i,1},s_{i,1}),\dotsc,(v_{i,j_{i}},s_{i,j_{i}}))\in u\oD^{c}(j_{i}),
\end{gather*}
the resulting element of $uE^{c}(j)$, 
\[
((\xi_{1},t_{1}),\dotsc,(\xi_{j},t_{j}),(\psi_{1},\dotsc,\psi_{j}))
\]
is given as follows.  For $\ell\in \{1,\dotsc,j\}$ define $j(\ell)$ to
be the smallest integer such that 
\[
j_{1}+\dotsb+j_{j(\ell)}\geq\ell
\]
(so $j(\ell)=1$ for $\ell\in \{1,\dotsc,j_{1}\}$, $j(\ell)=2$ for
$\ell\in \{j_{1}+1,\dotsc,j_{1}+j_{2}\}$, etc.), and define 
\[
k(\ell)=\ell-(j_{1}+\dotsb+j_{j(\ell)-1})
\]
(where we understand the parenthetical sum as $0$ when $j(\ell)=1$).
We have made this definition so that the $\ell$ index in the codomain
corresponds to the $j(\ell),k(\ell)$ index in the domain, and we take
\begin{gather*}
\xi_{\ell}=e^{ir_{j(\ell)}v_{j(\ell),k(\ell)}}\zeta_{j(\ell)}\\
t_{\ell}=r_{j(\ell)}s_{j(\ell),k(\ell)};
\end{gather*}
in other words, in terms of the corresponding maps of the interval, we
do the usual diagonal composition.  For the parameters $\psi_{\ell}$,
we take 
\[
\psi_{\ell}=\begin{cases}
r_{j(\ell)}(v_{j(\ell),k(\ell)+1}-v_{j(\ell),k(\ell)})
  &\text{if }\ell<j\text{ and }j(\ell+1)=j(\ell)\\
\phi_{j(\ell)}-r_{j(\ell)}v_{j(\ell),k(\ell)}
  +r_{j(\ell)+1}v_{j(\ell)+1,1}
  &\text{if }\ell<j\text{ and }j(\ell+1)=j(\ell)+1\\
\phi_{n}-r_{n}v_{n,j_{n}}
  +r_{1}v_{1,1}
  &\text{if }\ell=j;
\end{cases}
\]
an easy check shows that this defines an element of $uE^{c}(j)$.

We define $E^{c}(n)$ to be the $S^{1}\times (\Sigma_{n}\wr
C_{m})^{\op}$-space $E^{c}(n)\times_{Z_{n}}\Sigma_{n}$.  The cyclic
permutation action on the $uE^{c}(n)$ has the same compatibility with
composition as on the $uE(n)$, and we get a right action of $\oD^{c}$
on $E^{c}$ generalizing the formulas above for the right action of
$\oD_{\bR}$ on $E$.
\end{cons}

As discussed above, the functions $\theta_{j}$ on $uE(n)$ fill in
parameters $\phi_{j}$ to define a map $uE(n)\to uE^{c}(n)$ and hence a
map $E(n)\to E^{c}(n)$, which are easily seen to be inclusions.  The
latter map is $S^{1}\times (\Sigma_{n}\wr C_{m})^{\op}$-equivariant;
we show it is an $S^{1}\times (\Sigma_{n}\wr C_{m})^{\op}$-equivariant
homotopy equivalence.

\begin{prop}\label{prop:EtoEc}
The map $E(n)\to E^{c}(n)$ is an $S^{1}\times (\Sigma_{n}\wr
C_{m})^{\op}$-equivariant homotopy equivalence.
\end{prop}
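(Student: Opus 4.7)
The plan is to reduce to the non-symmetric version via the isomorphisms $E(n)\iso uE(n)\times_{Z_{n}}\Sigma_{n}$ and $E^{c}(n)\iso uE^{c}(n)\times_{Z_{n}}\Sigma_{n}$ of $S^{1}\times (\Sigma_{n}\wr C_{m})^{\op}$-spaces, so it suffices to prove that $uE(n)\hookrightarrow uE^{c}(n)$ is $S^{1}\times (Z_{n}\wr C_{m})^{\op}$-equivariantly a homotopy equivalence. I then construct an explicit equivariant strong deformation retraction of $uE^{c}(n)$ onto the ``symmetric configuration'' subspace $Y$ consisting of data with $\phi_{j}\equiv 2\pi/(mn)$ and $r_{j}\equiv \pi/(mn)$, and verify that the same deformation keeps $uE(n)$ inside $uE(n)$; once done, $Y\hookrightarrow uE(n)\hookrightarrow uE^{c}(n)$ are both equivariant homotopy equivalences and composition gives the claim.

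Concretely, for $(\zeta,r,\phi)\in uE^{c}(n)$ and $t\in [0,1]$, set $\beta_{k}(t)=\pi(\phi_{k}^{t}-\phi_{k})$ and define $h_{t}(\zeta,r,\phi)=((\zeta_{j}^{t},r_{j}^{t}),(\phi_{j}^{t}))$ by
\[
\phi_{j}^{t}=(1-t)\phi_{j}+t\cdot (2\pi/(mn)),\qquad
r_{j}^{t}=(1-t)r_{j}+t\cdot (\min_{k}\phi_{k}^{t})/2,
\]
and $\zeta_{j}^{t}=e^{i\gamma_{j}(t)}\zeta_{j}$ with $\gamma_{j}(t)=(1/n)\sum_{k=1}^{n}k\beta_{k}(t)+\sum_{k<j}\beta_{k}(t)$. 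Three things have to be checked. Non-overlap is preserved because
\[
r_{j}^{t}+r_{j+1}^{t}=(1-t)(r_{j}+r_{j+1})+t\min_{k}\phi_{k}^{t}\leq (1-t)\phi_{j}+t\cdot (2\pi/(mn))=\phi_{j}^{t},
\]
where the last inequality uses $\min_{k}\phi_{k}^{t}\leq (1/n)\sum_{k}\phi_{k}^{t}=2\pi/(mn)$. Strict positivity of $\phi_{j}$ and $r_{j}$ is preserved under $h_{t}$, so the deformation restricts to $uE(n)$, and $h_{t}|_{Y}=\id_{Y}$ since $\beta_{k}(t)\equiv 0$ on $Y$. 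For equivariance, $S^{1}$ and $C_{m}^{n}$ act on $\zeta_{j}$ by multiplications that pass through the exponential factor, and the only nontrivial check is $Z_{n}$-equivariance of $\gamma_{j}(t)$ under cyclic shift $k\mapsto k-1\bmod n$; this is an elementary rearrangement using $\sum_{k}\beta_{k}(t)=0$, the ``first-moment'' normalization $(1/n)\sum_{k}k\beta_{k}(t)$ being chosen precisely so that $\gamma_{j}^{\mathrm{new}}(t)=\gamma_{j+1}^{\mathrm{old}}(t)$ holds identically.

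The delicate point is that naive deformations fail. If one tries to inflate the $r_{j}$ toward a fixed constant target, then as a large $\phi_{j}$ shrinks toward the average, adjacent nearly-touching disks can be forced to overlap; tying the inflation target to the time-dependent quantity $\min_{k}\phi_{k}^{t}$ is what keeps the non-overlap inequality continuously satisfied. Similarly, a naive choice of $\gamma_{1}(t)$ that anchors index $1$ breaks cyclic symmetry; the first-moment normalization is essentially forced by the requirement that cyclic equivariance of $\gamma_{j}(t)$ be compatible with the identity $\sum_{k}\beta_{k}(t)=0$, and these two adaptations together are what make the elementary explicit formulas above simultaneously preserve the geometric constraints and the full group action.
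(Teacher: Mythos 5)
Your argument is correct, and while it shares the paper's general strategy of an explicit equivariant homotopy, the route is genuinely different. The paper's proof first shrinks all radii to zero, identifying $E(n)$ up to equivariant homotopy with the subspace $X_{0}$ of distinct zero-radius configurations and $E^{c}(n)$ with the subspace $X$ of all zero-radius configurations, and then shows $X_{0}\hookrightarrow X$ is an equivariant homotopy equivalence via the $n$-fold iterate of a nearest-neighbor averaging homotopy ($\zeta_{j}\mapsto e^{it\phi_{j}/2}\zeta_{j}$, $\phi_{j}\mapsto\phi_{j}(1-t/2)+\phi_{j+1}t/2$), using the filtration $X_{k}$ by the number of vanishing gaps to see that the iterate lands in $X_{0}$. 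That averaging map is manifestly cyclically equivariant because it treats all indices identically, so no normalization is needed; the cost is the two-step structure and the iteration/filtration bookkeeping. Your proof instead produces a single strong equivariant deformation retraction of $uE^{c}(n)$ onto the regular configuration $Y$ that simultaneously restricts to one of $uE(n)$, so the conclusion is immediate by two-out-of-three; the cost is exactly the two adaptations you isolate, and both check out: the non-overlap estimate correctly combines $r_{j}+r_{j+1}\leq\phi_{j}$ for the initial data with $\min_{k}\phi_{k}^{t}\leq(1/n)\sum_{k}\phi_{k}^{t}$, and the identity $\gamma'_{j}(t)=\gamma_{j+1}(t)$ under the cyclic shift does follow from $\sum_{k}\beta_{k}(t)=0$ together with the first-moment term. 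Your preliminary reduction to the non-$\Sigma$ spaces via induction along $Z_{n}\wr C_{m}\leq\Sigma_{n}\wr C_{m}$ is valid and harmless; the paper effectively does the same by writing its homotopy on representatives $(\dotsc,\sigma)$.
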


\begin{proof}
There is nothing to show in the case $n=0$ and the case $n=1$ is
clear.  For $n\geq 2$, we can identify $E(n)$ with its homeomorphic
image, which consists of the elements where (in our usual notation)
none of the $r_{j}$'s are zero.  Let $X$ denote the subspace of
$E^{c}(n)$ where all the $r_{j}$'s are zero.  We have an obvious
equivariant deformation retraction of $E(n)$ on to $X$, which
induces an equivariant homotopy equivalence between $E(n)$ and the
subspace $X_{0}$ of $X^{c}$ where the $[\zeta_{j}]$ are all distinct
elements of $S^{1}/C_{m}$.  (The center point map gives an 
equivariant homeomorphism from $X$ to the $C_{m}$-framed
configuration space $C_{C_{m}}(n,S^{1}/C_{m})$ described in the
paragraph following Definition~\ref{defn:relnorm}, with the functions
$\theta$ above inducing the inverse.) We can
equivalently describe $X_{0}$ as the subspace where all the $\phi_{j}$
are positive; let $X_{k}\subset X$ be the subspace where at most $k$
of the $\phi_{j}$ are zero.  Then $X=X_{n}$.  Consider the
equivariant self-homotopy of $X$ that at time $t$ sends the
element represented by
\[
((((\zeta_{1},0),\dotsc,(\zeta_{n},0)),(\phi_{1},\dotsc,\phi_{n})),\sigma)
\]
to the element represented by
\[
((((e^{it\phi_{1}/2}\zeta_{1},0),\dotsc,(e^{it\phi_{n}/2}\zeta_{n},0)),
(\phi_{1}(1-t/2)+\phi_{2}t/2,\dotsc,\phi_{n}(1-t/2)+\phi_{1}t/2)),\sigma).
\]
This starts at the identity and ends at an endomorphism $f$ of $X$.
The endomorphism $f$ sends $X_{k}$ into $X_{k-1}$ for
$k>0$.  The $n$th iterate then sends $X$ into $X_{0}$ and is
evidently an equivariant homotopy inverse to the inclusion.
\end{proof}

Let $\bar{\mathrm E}^{c}$ be the functor from $C_{m}$-equivariant
orthogonal spectra to $S^{1}$-equivariant orthogonal spectra defined
by
\[
\bar{\mathrm E}^{c}X
=\bigvee_{n\geq 0} E^{c}(n)_{+}\sma_{\Sigma_{n}\wr C_{m}}X^{(n)}
\iso 
\bigvee_{n\geq 0} uE^{c}(n)_{+}\sma_{Z_{n}\wr C_{m}}X^{(n)}
\]
and let $\bar \bD^{c}$ be the monad in $C_{n}$-equivariant orthogonal
spectra associated to the $C_{n}$-equivariant operad $\oD^{c}$, 
\[
\bD^{c}X=\bigvee_{n\geq 0} \oD^{c}(n)_{+}\sma X^{(n)}.
\]
The right action of $\oD^{c}$ on $E^{c}$ descends to give a right action of the monad $\bD^{c}$ on
$\bar{\mathrm{E}}^{c}$, and the inclusion of $E$ in $E^{c}$ is
compatible with the actions in the sense that the diagram
\[
\xymatrix@-1pc{%
\bar{\mathrm{E}}\bar\bD\ar[r]\ar[d]
&\bar{\mathrm{E}}\ar[d]\\
\bar{\mathrm{E}}^{c}\bar\bD^{c}\ar[r]
&\bar{\mathrm{E}}^{c}
}
\]
of functors from $C_{m}$-equivariant orthogonal spectra to
$S^{1}$-equivariant orthogonal spectra precisely commutes. This induces a map of
monadic bar constructions
\[
B(\bar{\mathrm{E}},\bar\bD,-)\to
B(\bar{\mathrm{E}}^{c},\bar\bD^{c},-).
\]
Propositions~\ref{prop:DDcA} and~\ref{prop:EtoEc} now prove the following proposition.

\begin{prop}\label{prop:zig2}
For any $C_{m}$-equivariant $\oD_{\bR}$-algebra $X$, the map of
monadic bar constructions
\[
B\subdot(\bar{\mathrm{E}},\bar \bD,X)\to 
B\subdot(\bar{\mathrm{E}}^{c},\bar \bD^{c},X)
\]
is on each level a natural $S^{1}$-equivariant homotopy equivalence. 
\end{prop}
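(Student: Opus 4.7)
The plan is to show that at each simplicial level $q$, the map
\[
\bar{\mathrm E}\,\bar\bD^{q}X \to \bar{\mathrm E}^{c}(\bar\bD^{c})^{q}X
\]
is an $S^{1}$-equivariant homotopy equivalence, natural in $X$, by a direct summand-by-summand comparison built on Propositions~\ref{prop:DDcA} and~\ref{prop:EtoEc}.

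First, I would expand the iterated monadic composites on each side to obtain wedge decompositions indexed by the same set of planar arity trees with $q+1$ levels of internal vertices (whose leaves give the smash power of $X$).  The summand in $\bar{\mathrm E}\,\bar\bD^{q}X$ indexed by a tree with arity $n_{0}$ at the root and arities $n_{1}^{(1)},\dotsc,n_{1}^{(n_{0})}$ at the next level, and so on down, has the form
\[
\bigl(E(n_{0})_{+}\sma \oD_{\bR}(n_{1}^{(1)})_{+}\sma\dotsb\bigr)\sma_{G} X^{(N)},
\]
where $N$ is the number of leaves and $G$ is an iterated wreath product of symmetric groups with copies of $C_{m}$ determined by the tree shape.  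The corresponding summand in $\bar{\mathrm E}^{c}(\bar\bD^{c})^{q}X$ is obtained by replacing every $E$ by $E^{c}$ and every $\oD_{\bR}$ by $\oD^{c}$, leaving the $G$-action on $X^{(N)}$ unchanged.  By naturality of the monad and module structure maps, the comparison respects this decomposition, so it suffices to handle each summand separately.

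For a single summand, Propositions~\ref{prop:EtoEc} and~\ref{prop:DDcA} together show that the space-level map
\[
E(n_{0})\times \oD_{\bR}(n_{1}^{(1)})\times \dotsb \to E^{c}(n_{0})\times \oD^{c}(n_{1}^{(1)})\times \dotsb
\]
is an $(S^{1}\times G)$-equivariant homotopy equivalence (with $S^{1}$ acting trivially on the $\oD$-factors), since a product of equivariant homotopy equivalences is equivariant.  Next I would verify that $G$ acts freely on both sides of this map: the copies of $C_{m}$ in $G$ act freely because $C_{m}$ acts freely on $S^{1}$; the outermost symmetric group $\Sigma_{n_{0}}$ acts freely because the $n_{0}$ embedded disks in an element of $E(n_{0})$ are disjoint and because the $Z_{n_{0}}$-action on $uE^{c}(n_{0})$ is free (as $\phi_{1}+\dotsb+\phi_{n_{0}}=2\pi/m\neq 0$ rules out cyclic fixed points); and the lower-level symmetric groups act freely on the $\oD_{\bR}(n)$ and $\oD^{c}(n)$ factors by disjointness of images and by construction, respectively.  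With a free $G$-action on both sides, the standard principle that a free $G$-equivariant homotopy equivalence $A\to A'$ of $G$-CW complexes induces a homotopy equivalence $A_{+}\sma_{G}W \to A'_{+}\sma_{G}W$ for any $G$-spectrum $W$ yields the desired $S^{1}$-equivariant homotopy equivalence of the summand.  Naturality in $X$ is immediate because the equivalences $E(n_{0})\to E^{c}(n_{0})$ and $\oD_{\bR}(n)\to \oD^{c}(n)$ are independent of $X$.

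The main obstacle will be organizational: precisely identifying the iterated wreath-product group $G$ at each tree, verifying its freeness on the full product, and matching the wedge decompositions on the two sides.  Once that bookkeeping is done, the proposition reduces to the two space-level propositions combined with the free-orbit principle, and wedging the resulting summand-wise $S^{1}$-equivariant homotopy equivalences gives the level-$q$ equivalence.
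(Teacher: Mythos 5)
Your argument is essentially the paper's: the paper proves this proposition by simply citing Propositions~\ref{prop:DDcA} and~\ref{prop:EtoEc}, and your tree/summand decomposition is exactly the bookkeeping those citations leave implicit. One correction, though: the freeness verification is both unnecessary and, as written, incomplete. It is unnecessary because Propositions~\ref{prop:DDcA} and~\ref{prop:EtoEc} give genuine equivariant homotopy equivalences (equivariant inverses and equivariant homotopies), and the functor $A\mapsto A_{+}\sma_{G}W$ is topologically enriched, so it sends equivariant homotopy equivalences to homotopy equivalences for \emph{any} $W$ with no freeness or CW hypothesis; the only point needing care is that the homotopy inverses on the $\oD$-factors are chosen uniformly in each arity so that their product is equivariant for the permuting outer symmetric groups. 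It is incomplete because checking that the $C_{m}$-copies and the symmetric/cyclic subgroups act freely separately does not show the whole wreath product acts freely — mixed elements such as $\upsilon_{m,n}$, which combine a cyclic permutation with a $C_{m}$-translation, are exactly the ones with the best chance of having fixed points and are not addressed. Dropping that paragraph leaves a correct proof.
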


For the last zigzag, let $uC^{c}(n)$ be the subspace of $uE^{c}(n)$
where the $r_{j}$'s are all zero, and let
$C^{c}(n)=uC^{c}\times_{Z_{n}}\Sigma_{n}$.  Viewing $C^{c}(n)$ as a
subspace of $E^{c}(n)$, it inherits a $S^{1}\times (\Sigma_{n}\wr
C_{m})^{\op}$-action and the inclusion of $C^{c}(n)$ in $E^{c}(n)$ is
an equivariant homotopy equivalence.  The right $\oD^{c}$-action on
$E^{c}$ restricts to $C^{c}$, and using the map of operads $\oA\to
\oD^{c}$, we get a right action of $\oA$ on $\oC^{c}$, making the
following diagram precisely commute
\[
\xymatrix@-1pc{%
\bar{\mathrm{C}}^{c}\bT\ar[r]\ar[d]
&\bar{\mathrm{C}^{c}}\ar[d]\\
\bar{\mathrm{E}}^{c}\bar\bD^{c}\ar[r]
&\bar{\mathrm{E}}^{c}
}
\]
where $\bT$ is the free associative algebra monad (the monad
associated to the operad $\oA$) and
$\bar{\mathrm{C}}^{c}$ denotes the functor from
$C_{m}$-equivariant orthogonal spectra to $S^{1}$-equivariant
orthogonal spectra
\[
\bar{\mathrm{C}}^{c}X=
\bigvee_{n\geq 0} C^{c}(n)_{+}\sma_{\Sigma_{n}\wr C_{m}}X^{(n)}
\iso 
\bigvee_{n\geq 0} uC^{c}(n)_{+}\sma_{Z_{n}\wr C_{m}}X^{(n)}
\]
We get a corresponding map of bar constructions and the following
proposition is now clear.

\begin{prop}\label{prop:zig3}
For any $C_{m}$-equivariant associative ring orthogonal spectrum $R$,
the map of monadic bar constructions
\[
B\subdot(\bar{\mathrm{C}}^{c},\bT,R)\to 
B\subdot(\bar{\mathrm{E}}^{c},\bar \bD^{c},R)
\]
is on each level a natural $S^{1}$-equivariant homotopy equivalence. 
\end{prop}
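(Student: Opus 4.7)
The plan is to analyze the comparison at each fixed simplicial level $q$, where the map takes the form $\bar{\mathrm{C}}^{c}\bT^{q}R\to \bar{\mathrm{E}}^{c}(\bar\bD^{c})^{q}R$, and to factor it as
\[
\bar{\mathrm{C}}^{c}\bT^{q}R\ \to\ \bar{\mathrm{C}}^{c}(\bar\bD^{c})^{q}R\ \to\ \bar{\mathrm{E}}^{c}(\bar\bD^{c})^{q}R,
\]
with the first arrow $\bar{\mathrm{C}}^{c}$ applied to the iterated monad comparison $\bT^{q}\to (\bar\bD^{c})^{q}$ and the second arrow the $\bar{\mathrm{C}}^{c}\to\bar{\mathrm{E}}^{c}$ comparison applied to $(\bar\bD^{c})^{q}R$. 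I would show that each factor is a natural $S^{1}$-equivariant homotopy equivalence. This template is precisely the one used to prove Propositions~\ref{prop:zig1} and~\ref{prop:zig2}, with the operad map $\oA\to \oD^{c}$ replacing the role of $\oD_{\bR}\to \oD^{c}$ and the already-observed inclusion $C^{c}(n)\hookrightarrow E^{c}(n)$ replacing $E(n)\to E^{c}(n)$.

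For the first factor, I would first establish that the natural transformation $\bT\to \bar\bD^{c}$ is pointwise a $C_{m}$-equivariant homotopy equivalence of endofunctors on $C_{m}$-equivariant orthogonal spectra: since $\oA(n)$ and $\oD^{c}(n)$ carry trivial $C_{m}$-action, Proposition~\ref{prop:DDcA} gives a $\Sigma_{n}$-equivariant homotopy equivalence $\oA(n)\to \oD^{c}(n)$ whose inverse and homotopies descend through $(-)_{+}\sma_{\Sigma_{n}}X^{(n)}$ to a $C_{m}$-equivariant homotopy equivalence $\bT X\to \bar\bD^{c}X$ for every $C_{m}$-equivariant $X$. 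Moreover, each of $\bT$ and $\bar\bD^{c}$ preserves $C_{m}$-equivariant homotopy equivalences, because a $C_{m}$-equivariant homotopy equivalence $f\colon X\to X'$ induces a $\Sigma_{n}\wr C_{m}$-equivariant homotopy equivalence $f^{(n)}$ (using the diagonal $I\to I^{n}$ to combine per-factor homotopies $\Sigma_{n}\wr C_{m}$-equivariantly), which is preserved by smashing with a space with trivial $C_{m}$-action and passing to the $\Sigma_{n}$-quotient. An induction on $q$ that replaces one $\bT$ at a time then yields that $\bT^{q}R\to (\bar\bD^{c})^{q}R$ is a $C_{m}$-equivariant homotopy equivalence, and applying $\bar{\mathrm{C}}^{c}$ to this yields an $S^{1}$-equivariant homotopy equivalence by the same argument: the $\Sigma_{n}\wr C_{m}$-equivariant homotopy data on the smash-power map descends through $C^{c}(n)_{+}\sma_{\Sigma_{n}\wr C_{m}}(-)^{(n)}$, leaving only the $S^{1}$-action from $C^{c}(n)$.

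For the second factor, the $S^{1}\times (\Sigma_{n}\wr C_{m})^{\op}$-equivariant homotopy equivalence $C^{c}(n)\hookrightarrow E^{c}(n)$ recalled immediately before the proposition, when smashed with $((\bar\bD^{c})^{q}R)^{(n)}$ and quotiented by $\Sigma_{n}\wr C_{m}$, gives an $S^{1}$-equivariant homotopy equivalence on each wedge summand of $\bar{\mathrm{C}}^{c}(\bar\bD^{c})^{q}R\to \bar{\mathrm{E}}^{c}(\bar\bD^{c})^{q}R$; wedging over $n$ and composing with the first factor completes the proof. The main bookkeeping hurdle is verifying that the equivariant inverses and homotopies for the elementary pieces $\oA(n)\to \oD^{c}(n)$ and $C^{c}(n)\hookrightarrow E^{c}(n)$ remain compatible after smashing with $R^{(n)}$ and passing to the $\Sigma_{n}\wr C_{m}$-quotient, but this is routine since $\oA(n)$ and $\oD^{c}(n)$ have trivial $C_{m}$-action and the $S^{1}$- and $(\Sigma_{n}\wr C_{m})^{\op}$-actions on $C^{c}(n)$ and $E^{c}(n)$ commute.
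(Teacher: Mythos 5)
Your proposal is correct and follows essentially the same route the paper intends: the paper declares the proposition "clear" from the commuting square of right actions together with the equivariant homotopy equivalences $C^{c}(n)\hookrightarrow E^{c}(n)$ and $\oA(n)\to\oD^{c}(n)$ (Proposition~\ref{prop:DDcA}), and your levelwise factorization through $\bar{\mathrm{C}}^{c}(\bar\bD^{c})^{q}R$ is exactly the standard way to cash in those ingredients. The only service you perform beyond the paper is spelling out the routine descent of the equivariant homotopy data through the $\Sigma_{n}$- and $\Sigma_{n}\wr C_{m}$-quotients, which is correct as written.
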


This completes the proof of the assertion about the first display in
Theorem~\ref{thm:circle}. For the isomorphism in the second display,
we use the following construction.

\begin{cons}
For $R$ a $C_{m}$-equivariant associative ring orthogonal spectrum.
Define the $S^{1}$-equivariant orthogonal spectrum
$\bar{\mathrm{C}}^{c}\otimes_{\bT}X$ to be the (point-set) coequalizer
\[
\xymatrix@C-.75pc{%
\bar{\mathrm{C}}^{c}\bT R\ar@<-.5ex>[r]\ar@<+.5ex>[r]
&\bar{\mathrm{C}}^{c} R\ar[r]
&\bar{\mathrm{C}}^{c}\otimes_{\bT}R
}
\]
with one map 
$\bar{\mathrm{C}}^{c}\bT R\to \bar{\mathrm{C}}^{c} R$
induced by the right $\bT$-action on $\bar{\mathrm{C}}^{c}$ and the
other induced by the left $\bT$-action on $R$.
\end{cons}

Since the functors $\bar{\mathrm{C}}^{c}$ and $\bT$ commute with
geometric realization, we have a natural isomorphism
\[
\bar{\mathrm{C}}^{c}\otimes_{\bT}B(\bT,\bT,R)\iso
B(\bar{\mathrm{C}}^{c},\bT,R).
\]
The proof of Theorem~\ref{thm:circle} is therefore completed by the
verification of the following theorem.

\begin{thm}\label{thm:cycbar}
There is a natural isomorphism 
\[
\bar{\mathrm{C}}^{c}\otimes_{\bT}(-)\iso N^{cyc,C_{m}}_{\sma}(-)
\]
of functors from $C_{m}$-equivariant associative ring orthogonal
spectra to $S^{1}$-equivariant orthogonal spectra.
\end{thm}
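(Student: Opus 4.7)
The plan is to unwind the coequalizer $\bar{\mathrm{C}}^{c}\otimes_{\bT}R$ into an explicit geometric-realization form and match it term by term with the $C_{m}$-relative cyclic bar construction $N^{cyc,C_{m}}_{\sma}R$ of \cite[8.2]{ABGHLM}. First, I would give explicit coordinates on $uC^{c}(n)$. Dropping the identically-zero radii $r_{j}$, an element of $uC^{c}(n)$ is a tuple $(\zeta_{1},\ldots,\zeta_{n};\phi_{1},\ldots,\phi_{n})$ with $\zeta_{j}\in S^{1}$, $\phi_{j}\in[0,2\pi/m]$, $\sum_{j}\phi_{j}=2\pi/m$, and $[\zeta_{j+1}]=[e^{i\pi\phi_{j}}\zeta_{j}]$ in $S^{1}/C_{m}$. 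For $j\geq 2$ the lift $\zeta_{j}\in S^{1}$ is determined by $\zeta_{1}$, the $\phi$'s, and an element $c_{j}\in C_{m}$, giving a natural homeomorphism
\[
uC^{c}(n)\iso S^{1}\times\Delta^{n-1}\times C_{m}^{n-1}
\]
(where $\Delta^{n-1}$ is the simplex of $\phi$'s scaled to total length $2\pi/m$), under which the right $C_{m}^{n}$-action is free on the $C_{m}^{n-1}$ factor via the last $n-1$ coordinates and rotates $\zeta_{1}$ via the first coordinate.

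Second, I would use the freeness of $\{1\}\times C_{m}^{n-1}\subset C_{m}^{n}$ to absorb the last $n-1$ copies of the $C_{m}$-action on $R^{(n)}$, obtaining
\[
uC^{c}(n)_{+}\sma_{Z_{n}\wr C_{m}}R^{(n)}\iso (S^{1}\times\Delta^{n-1})_{+}\sma_{Z_{n}\ltimes C_{m}}R^{(n)},
\]
where the surviving $C_{m}$ acts diagonally as rotation on $S^{1}$ and via the given structure on the first smash factor of $R^{(n)}$, and $Z_{n}$ cyclically permutes the $\phi$'s and the smash factors, with a compensating rotation of $\zeta_{1}$ by $\phi_{n}$ as the generator crosses the cyclic ``seam.'' This is the standard geometric model of the $(n-1)$-dimensional $C_{m}$-twisted cyclic simplex smashed with $R^{(n)}$.

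Third, I would show that the right $\bT$-action on $\bar{\mathrm{C}}^{c}$ encodes the simplicial face structure of the cyclic bar complex. Restricting the composition formula for $\oD^{c}$ to the $r_{j}=0$ locus, a $j$-ary element of $\oA\subset\oD^{c}$ acting at slot $i$ inserts $j-1$ zero-width gaps ($\psi=0$) among the $\phi$'s and, after coequalizing with the action on $R$, multiplies the corresponding $j$ adjacent smash factors. Hence a configuration with $\phi_{j}=0$ is identified in $\bar{\mathrm{C}}^{c}\otimes_{\bT}R$ with a lower-level configuration whose two flanking factors at position $j$ have been multiplied; these are precisely the face maps of the cyclic bar complex of $R$.

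Assembling these steps, $\bar{\mathrm{C}}^{c}\otimes_{\bT}R$ is the geometric realization of the standard $C_{m}$-twisted cyclic object built from $R$, where the $C_{m}$-twist enters through the ``wrap-around'' face map across the cyclic seam, with its manifest $S^{1}$-action coming from rotating $\zeta_{1}$. This is exactly the definition of $N^{cyc,C_{m}}_{\sma}R$ in \cite[8.2]{ABGHLM}, and naturality is immediate from the construction. The main obstacle will be keeping the cyclic structure straight: specifically, verifying that the $Z_{n}$-action on $uC^{c}(n)$ (which involves the implicit rotation of $\zeta_{1}$ by $\phi_{n}$ and a reshuffling of the $c_{j}$'s) induces under coequalization the correct cyclic face map of the $C_{m}$-twisted bar construction, i.e., that crossing the cyclic seam acts by the generator of $C_{m}\subset S^{1}$ on the first smash factor of $R^{(n)}$ and by the cyclic permutation on the remaining factors.
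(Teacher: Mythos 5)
Your steps (1)--(2) are, in different coordinates, the paper's own key input: the unnumbered proposition identifying $uC^{c}(q+1)$ with $\Lambda_{m}[q]\times_{C_{m(q+1)}}(Z_{q+1}\wr C_{m})$, which gives $uC^{c}(n)_{+}\sma_{Z_{n}\wr C_{m}}R^{(n)}\iso \Lambda_{m}[n-1]_{+}\sma_{C_{mn}}R^{(n)}$. After that the routes diverge. You propose to finish by analyzing the coequalizer relation for $\bar{\mathrm{C}}^{c}\otimes_{\bT}R$ directly and matching it with the realization identifications; this is precisely the ``more careful analysis of the equivalence relation'' that the authors say should work but deliberately avoid. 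The paper instead uses the isomorphism above only to write down the natural map $\bar{\mathrm{C}}^{c}\otimes_{\bT}(-)\to N^{cyc,C_{m}}_{\sma}(-)$, checks it is an isomorphism when $R=\bT X$ is free (there $N^{cyc,C_{m}}_{\sma,\bullet}(\bT X)$ splits by homogeneous degree in $X$, with the degree-$n$ piece $(\Lambda_{m}[n-1]_{\bullet})_{+}\sma_{C_{mn}}X^{(n)}$), and then concludes formally: both functors preserve reflexive coequalizers and every associative ring is the canonical reflexive coequalizer $\bT\bT R\rightrightarrows \bT R\to R$. That trick removes any need to understand the generated equivalence relation; your route, if completed, yields a more explicit description of $\bar{\mathrm{C}}^{c}\otimes_{\bT}R$ but carries the entire burden in the relation-matching step.

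If you want to carry out the direct route, three points must be nailed down, and they are currently asserted rather than proved. (i) Your step (3) produces only face identifications; the geometric realization also imposes degeneracy identifications, which in the coequalizer come from the $0$-ary part of $\oA$ (deleting a point labeled by the unit $\bS\to R$). Without these you would only identify a fat-realization-type quotient. (ii) The wrap-around face $d_{q}=d_{0}\circ\tau_{q}$ of the $C_{m}$-twisted cyclic bar construction is not directly in the image of the $\bT$-relation, which only merges points adjacent in the chosen linear order; it has to be produced from an interior face together with the twisted cyclic action already coequalized in your step (2). You flag this, correctly, as the crux, but it is exactly the verification that remains to be done. (iii) The residual group in step (2) is not $Z_{n}\ltimes C_{m}$ acting through the first smash factor: the subgroup of $Z_{n}\wr C_{m}$ preserving your slice is the cyclic group of order $mn$ generated by $\upsilon_{m,n}=((1\dotsb n);1,\dotsc,1,e^{-2\pi i/m})$, whose $n$th power is the \emph{diagonal} $C_{m}$ (the pure cyclic permutation does not preserve the slice, and a $C_{m}$ acting on $\zeta_{1}$ and the first factor alone does not either). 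Getting this group and its action right is what makes the balanced product equal $\Lambda_{m}[n-1]_{+}\sma_{C_{mn}}R^{(n)}$ and, in point (ii), what makes the seam-crossing face act by the generator of $C_{m}$ as in the definition of $N^{cyc,C_{m}}_{\sma}$.
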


Before explaining the isomorphism, we begin with a brief review of the
functor $N^{cyc,C_{m}}_{\sma}$ from $C_{m}$-equivariant associative
ring orthogonal spectra to $S^{1}$-equivariant orthogonal spectra.
Non-equivariantly, $N^{cyc,C_{m}}_{\sma}$ is a variant of the cyclic
bar construction: it is the geometric realization of the simplicial
object with $q$th object the $(q+1)$th smash power
\[
N^{cyc,C_{m}}_{\sma,q}R=R^{(q+1)}
\]
with degeneracy $s_{i}$ induced by the inclusion of the identity in
the $(i+1)$th position and face map $d_{i}$ for $i=0,\dotsc,q-1$, the
multiplication in positions $i+1$ and $i+2$.  The last face map
$d_{q+1}$ cycles the last position around to the front acts on it by
the generator $e^{2\pi i/m}\in C_{m}<S^{1}<\bC^{\times}$ and then
multiplies the (new) $1$st and $2$nd positions.  Then
$d_{q+1}=d_{0}\circ \tau_{q}$ where $\tau_{q}$ is the operation on
$R^{(q+1)}$ that cycles the factors and then applies $e^{2\pi i/m}$ in
the first factor.  The face, degeneracy, and $\tau$ operators satisfy the
relations
\begin{equation}\label{eq:cycm}
\begin{aligned}
&\qquad\qquad \tau_{q}^{m(q+1)}=\id\\
d_{0}\tau_{q}&=d_{q}&s_{0}\tau_{q}&=\tau_{q+1}^{2}s_{q}\\
d_{i}\tau_{q}&=\tau_{q-1}d_{i-1}&
s_{i}\tau_{q}&=\tau_{q+1}s_{i-1}&&(1\leq i\leq q)\\
\end{aligned}
\end{equation}
(in addition to the simplicial identities relating just the faces and
degeneracies). As in~\cite[\S1]{BHM} (but with slightly different
indexing conventions), this implicitly defines a category
$\LLambda^{\op}_{m}$, generalizing Connes' cyclic category, such that
$N^{cyc,C_{m}}_{\sma,\bullet}$ is a functor from $\LLambda^{\op}_{m}$
to orthogonal spectra.  We use the terminology \term{$m$-cyclic
orthogonal spectrum} (or \term{$m$-cyclic space}) for a functor from
$\LLambda_{m}^{\op}$ to orthogonal spectra (or spaces).

Let $\Lambda_{m}[q]$ denote the geometric realization of the
representable object
\[
\Lambda_{m}[q]\subdot=\Lambda_{m}(\bullet,q).
\]
As $q$, varies $\Lambda_{m}[q]$ is a functor from $\LLambda_{m}$ to
spaces.  Then (as in~\cite[1.8]{BHM}), for any
$\LLambda_{m}^{\op}$-object $X\subdot$ (in spaces or orthogonal
spectra), the inclusion of the simplex category $\DDelta$ in
$\LLambda_{m}$ induces an isomorphism from usual geometric realization of
$X\subdot$ to an $m$-cylic realization given as the coend over
$\LLambda_{m}$ of $X\subdot \sma \Lambda_{m}[\bullet]_{+}$ (in the case of
orthogonal spectra).  The spaces $\Lambda_{m}[q]$ have a natural (in
$q\in \LLambda_{m}^{\op}$) action of the circle $S^{1}$;
see~\cite[1.6]{BHM}.  This gives the geometric realization of any
$m$-cyclic orthogonal spectrum (or space) a natural $S^{1}$-action.

To be precise, $\Lambda_{m}[q]$ is isomorphic to the space 
\[
\bR/m\bZ \times \Delta[q].
\]
Writing an element as $(r+m\bZ,t_{0},\dotsc,t_{q})$ where $r\in \bR$
and $t_{i}\geq 0$, $t_{0}+\dotsb+t_{q}=1$, the circle acts by
\[
e^{i \theta}\cdot (r+m\bZ,t_{0},\dotsc,t_{q})
=(r+\theta/(2\pi/m)+m\bZ,t_{0},\dotsc,t_{q}).
\]
As a functor of $\LLambda_{m}$, the face and degeneracy maps act in
the usual manner on the simplices and the twist $\tau_{q}$ acts by
\[
\tau_{q}(r+m\bZ,t_{0},\dotsc,t_{q})
=(r-t_{q}+m\bZ,t_{q},t_{0},\dotsc,t_{q-1}).
\]
(This is~\cite[1.6]{BHM} adjusted for our indexing convention.)

The spaces $\Lambda_{m}[q]$ are closely related to the spaces
$uC^{c}(n)$. To simplify notation, we write a typical 
element of $uC^{c}(n)$ as
\[
(\zeta_{1},\dotsc,\zeta_{n},\phi_{1},\dotsc,\phi_{n})
\]
for $\zeta_{i}\in S^{1}$, $\phi_{i}\in [0,2\pi/m]$ (dropping the
$r_{j}=0$ from the notation we used above and flattening
parentheses).  Let $\upsilon_{m,n} \in Z_{n}\wr C_{m}$ denote the
element 
\[
((1\dotsb n);1,\dotsc,1,e^{-2\pi i/m})\in Z_{n}\ltimes C_{m}^{n}
=Z_{n}\wr C_{m}.
\]
Then $\upsilon_{m,n}$
generates a cyclic subgroup of order $mn$ in $Z_{n}\wr C_{m}$ that
acts on $X^{(n)}$ (for a $C_{m}$-equivariant orthogonal spectrum $X$)
by acting by $e^{-2\pi i/m}$ on the last factor and then cycling it to
the first position. The precise relationship between the spaces
$\Lambda_{m}[q]$ and $uC^{c}(n)$ is as follows.

\begin{prop}
The map $\Lambda_{m}[q]\to uC^{c}(q+1)$ that sends
$(r+m\bZ,t_{0},\dotsc,t_{q})$ to 
\[
(e^{(2\pi/m)ir}, e^{(2\pi/m)i(r+t_{0})},\dotsc,e^{(2\pi/m)i(r+t_{0}+\dotsb+t_{q-1})},
(2\pi/m)t_{0},\dotsc,(2\pi/m)t_{q})
\]
induces a $S^{1}\times (Z_{q+1}\wr C_{m})^{\op}$-equivariant
isomorphism 
\[
\Lambda_{m}[q]\times_{C_{m(q+1)}}(Z_{q+1}\wr C_{m})\to uC^{c}(q+1)
\]
(where on the left, we are using the isomorphism $C_{m(q+1)}\iso
\langle u_{m,q+1}\rangle \subset Z_{q+1}\wr C_{m}$ sending the
generator $e^{2\pi i/(m(q+1))}$ to $\upsilon_{m,q+1}$).
\end{prop}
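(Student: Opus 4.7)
The plan is to verify well-definedness of the stated formula as a map $\Lambda_m[q] \to uC^c(q+1)$, extend it to $\Lambda_m[q]\times (Z_{q+1}\wr C_m)$ via the right $(Z_{q+1}\wr C_m)$-action on $uC^c(q+1)$, check that this extension descends to the balanced product over $C_{m(q+1)}$, and finally verify that the induced map is $S^1\times (Z_{q+1}\wr C_m)^{\op}$-equivariant and a homeomorphism.

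Well-definedness is immediate: independence of the representative of $r + m\bZ$ holds because $r\mapsto r+m$ leaves $e^{(2\pi/m)ir}$ fixed, and the defining conditions of $uC^c(q+1)$ are satisfied because $\phi_j := (2\pi/m)t_{j-1}\geq 0$ with $\sum\phi_j = 2\pi/m$ (from $\sum t_i = 1$), the classes $[\zeta_j]$ occur counter-clockwise starting from $[\zeta_1]$, and the relation between successive $[\zeta_j]$ holds by direct computation.

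The heart of the argument is the identity $f(\tau_q\lambda) = f(\lambda)\cdot \upsilon_{m,q+1}$. Applying the excerpt's formula $\tau_q(r+m\bZ, t_0,\dotsc,t_q) = (r - t_q + m\bZ, t_q, t_0,\dotsc,t_{q-1})$ and using $\sum t_i = 1$ to rewrite the new first coordinate of the image as $e^{-2\pi i/m}\zeta_{q+1}$, one finds that $f(\tau_q\lambda)$ is the cyclic shift of $f(\lambda)$ with the factor $e^{-2\pi i/m}$ absorbed into what was $\zeta_{q+1}$. On the other side, right multiplication by $\upsilon_{m,q+1}$ first multiplies $\zeta_{q+1}$ by $e^{-2\pi i/m}$ via the $C_m^{q+1}$-factor and then cyclically shifts the tuple to send the last position to the first, producing exactly the same element. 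This identity allows the extension $(\lambda,g)\mapsto f(\lambda)\cdot g$ on $\Lambda_m[q]\times (Z_{q+1}\wr C_m)$ to descend to the balanced product $\Lambda_m[q]\times_{C_{m(q+1)}}(Z_{q+1}\wr C_m)$, with $C_{m(q+1)}$ acting on the left by powers of $\tau_q$ and on the right by $\upsilon_{m,q+1}$. Full equivariance then follows: $f$ is $S^1$-equivariant by a direct check (the $S^1$-shift in $r$ translates to diagonal multiplication on the $\zeta_j$'s), the $S^1$- and $\tau_q$-actions on $\Lambda_m[q]$ commute (both modify $r$ additively and in a way independent of whether the $t_i$'s are permuted first), and right $(Z_{q+1}\wr C_m)^{\op}$-equivariance of the extension is automatic.

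For bijectivity I would construct an explicit continuous inverse: given $(\zeta_1,\dotsc,\zeta_{q+1},\phi_1,\dotsc,\phi_{q+1})\in uC^c(q+1)$, set $t_i := m\phi_{i+1}/(2\pi)$ and lift $\zeta_1$ along $r\mapsto e^{(2\pi/m)ir}$ to a unique $r+m\bZ$, defining $\lambda\in\Lambda_m[q]$; the difference between each $\zeta_j$ and the ``canonical'' value $e^{i(\phi_1+\dotsb+\phi_{j-1})}\zeta_1$ appearing in $f(\lambda)$ is a unique element $c_j\in C_m$ (with $c_1 = 1$), and the class of $(\lambda,(e;c_1,\dotsc,c_{q+1}))$ in the balanced product is the desired preimage; continuity is clear from the formulas. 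The main obstacle will be book-keeping the conventions: right versus left actions in $Z_{q+1}\wr C_m$, the direction of the cyclic permutation induced by $\upsilon_{m,q+1}$ on the right, and distinguishing the $\tau_q$-action on $\Lambda_m[q]$ from the restriction of the $S^1$-action to $C_{m(q+1)}$ --- these are two commuting but pointwise distinct $C_{m(q+1)}$-actions on $\Lambda_m[q]$, and the equivariance identity pins down that it is the $\tau_q$-action that corresponds, via the identification with $\langle\upsilon_{m,q+1}\rangle$, to the inclusion $C_{m(q+1)}\hookrightarrow Z_{q+1}\wr C_m$.
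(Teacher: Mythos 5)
Your proposal is correct and follows essentially the same route as the paper: the same formula, the same key identity $f(\tau_q\lambda)=f(\lambda)\cdot\upsilon_{m,q+1}$ (i.e.\ $e^{-2\pi i/m}e^{(2\pi/m)i(r+t_0+\dotsb+t_{q-1})}=e^{(2\pi/m)i(r-t_q)}$ via $\sum t_i=1$), and descent to the balanced product with the stated equivariance. The only divergence is the final step, where the paper simply observes the map is a continuous bijection of compact Hausdorff spaces and hence a homeomorphism, while you build an explicit continuous inverse by normalizing coset representatives with $c_1=1$; both are fine, the paper's being slightly quicker.
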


\begin{proof}
The displayed formula for the map $\Lambda_{m}[q]\to uC^{c}(q+1)$ is
clearly well-defined and $S^{1}$-equivariant; moreover, it is
equivariant for the right action of $C_{m(q+1)}$ on $\Lambda_{m}[q]$
and the $\langle \upsilon_{m,q+1}\rangle$ action on $uC^{c}(q+1)$ under
the given isomorphism since
\[
e^{-2\pi i/m}e^{(2\pi/m)i(r+t_{0}+\dotsb+t_{q-1})}=e^{(2\pi/m)i(r-t_{q})}
\]
($-1+t_{0}+\dots t_{q-1}=-t_{q}$).  The map
$\Lambda_{m}[q]\times_{C_{m(q+1)}}(Z_{q+1}\wr C_{m})\to uC^{c}(q+1)$
is therefore well-defined and $S^{1}\times (Z_{q+1}\wr
C_{m})^{\op}$-equivariant.  It is a continuous bijection of compact
Hausdorff spaces and therefore an isomorphism. 
\end{proof}

Using the isomorphism above, we get a well defined map
\begin{multline}\label{eq:CtoL}
\hspace{2em}
\bar{\mathrm C}^{c}R\iso \bigvee_{n\geq 0}uC^{c}(n)_{+}\sma_{Z_{n}\wr C_{m}}R^{(n)}\\
\to N^{cyc,C_{m}}_{\sma}(R) \iso \bigg(\bigvee_{q\geq 0} \Lambda_{m}[q]_{+}\sma R^{(q+1)}\bigg)\bigg/\sim
\hspace{4em}
\end{multline}
sending the $n=0$ summand $uC^{c}(0)_{+}\sma R^{(0)}\iso \bS$ by the
inclusion of $\{1\}_{+}\sma \bS$ in $\Lambda_{m}[0]\sma R^{(0)}\iso
S^{1}_{+}\sma \bS$, and for $n>0$, sending the $n$th summand through the $q=n-1$
summand using the isomorphism 
\[
uC^{c}(q+1)_{+}\sma_{Z_{q+1}\wr C_{m}}R^{(q+1)}\iso \Lambda_{m}[q]_{+}\sma_{C_{m(q+1)}}R^{(q+1)}
\]
implied by the previous proposition.  This is well defined because on
the right hand side, the $C_{m(q+1)}$-actions on
$\Lambda_{m}[q]_{+}\sma R^{(q+1)}$ are coequalized as part of the coend.
It is obvious that this map is $S^{1}$-equivariant on the $n$th summand
for $n>0$, but it is also $S^{1}$-equivariant on the $0$th summand (as
a consequence of the fact that on the image of $\bS$ in
$N^{cyc,C_{m}}_{\sma,0}$, $\tau_{1}s_{0}=s_{0}$).  

For the coequalizer forming $\bar{\mathrm C}^{c}\otimes_{\bT}R$ from $\bar{\mathrm C}^{c}R$,
\[
\bar{\mathrm C}^{c}\otimes_{\bT}R \iso \bigg(\bigvee_{n\geq 0}uC^{c}(n)_{+}\sma_{Z_{n}\wr C_{m}}R^{(n)}\bigg)\bigg/\sim,
\]
the equivalence relation induced by the action of $\oA$ on $C^{c}$ and
$R$ can be written in terms of faces, degeneracies, and twists in the
$m$-cyclic object $N^{cyc,C_{m}}_{\sma,\bullet}$ and relations
involving the the unit $\bS\to R$.  As a consequence, the map
$\bar{\mathrm C}^{c}R\to N^{cyc,C_{m}}_{\sma}R$ induces a
map $\bar{\mathrm C}^{c}\otimes_{\bT}R\to N^{cyc,C_{m}}_{\sma}R$ and
we get the following proposition.

\begin{prop}\label{prop:CtoL}
The map of~\eqref{eq:CtoL} induces a natural
transformation 
\[
\bar{\mathrm C}^{c}\otimes_{\bT}(-)\to
N^{cyc,C_{m}}_{\sma}(-)
\]
of functors from $C_{m}$-equivariant
associative ring orthogonal spectra to $S^{1}$-equivariant orthogonal
spectra. 
\end{prop}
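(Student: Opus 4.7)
The map \eqref{eq:CtoL} is manifestly natural in $R$, so the substantive content of the proposition is verifying that it coequalizes the pair
\[
\xymatrix@C-.75pc{\bar{\mathrm{C}}^{c}\bT R \ar@<-.5ex>[r]\ar@<+.5ex>[r] & \bar{\mathrm{C}}^{c}R}
\]
whose two arrows are, respectively, the right $\bT$-action on $\bar{\mathrm{C}}^{c}$ (coming from the operad map $\oA\to\oD^{c}$ and the right $\oD^{c}$-action on $C^{c}$) and the left $\bT$-action on $R$ (coming from the associative multiplication). My plan is to chase a typical representative
\[
c\wedge r^{1}\wedge\cdots\wedge r^{n}\in uC^{c}(n)_{+}\sma_{Z_{n}\wr C_{m}}(R^{(k_{1})}\sma\cdots\sma R^{(k_{n})})
\]
(with $c\in uC^{c}(n)$, $r^{i}\in R^{(k_{i})}$, and $k=k_{1}+\cdots+k_{n}$) through both composites and show they produce the same element of $N^{cyc,C_{m}}_{\sma}R$.

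The left-action composite multiplies within each block to produce $\bar r^{i}=r^{i}_{1}\cdots r^{i}_{k_{i}}\in R$ and then applies \eqref{eq:CtoL} to $c\wedge\bar r^{1}\wedge\cdots\wedge\bar r^{n}$, landing in $N^{cyc,C_{m}}_{\sma,n-1}R\subset N^{cyc,C_{m}}_{\sma}R$. The right-action composite feeds the $\oA$-generators (sent to $((0,0),\ldots,(0,0),\id)\in u\oD^{c}(k_{i})\times\Sigma_{k_{i}}$) into the operadic composition formula for $u\oD^{c}$; from the explicit formulas for $\xi_{\ell}$, $t_{\ell}$, and $\psi_{\ell}$, the resulting $c'\in uC^{c}(k)$ has $t_{\ell}=0$ everywhere, $\psi_{\ell}=0$ for every index $\ell$ internal to a block, and $\psi_{\ell}=\phi_{j(\ell)}$ only at block boundaries. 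Under the isomorphism of the preceding proposition, $c'$ thus maps to a point of $\Lambda_{m}[k-1]$ lying on the face of $\Delta^{k-1}$ given by collapsing all within-block barycentric coordinates to zero.

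The core of the argument is now the simplicial identification $(x,d_{i}t)\sim(d_{i}x,t)$ in the geometric realization $|N^{cyc,C_{m}}_{\sma,\bullet}R|$: applying the corresponding iterated face maps to $r^{1}_{1}\wedge\cdots\wedge r^{n}_{k_{n}}\in R^{(k)}$, which by construction of the $d_{i}$ are multiplications of adjacent positions, produces exactly $\bar r^{1}\wedge\cdots\wedge\bar r^{n}\in R^{(n)}$ paired with the corresponding point of $\Lambda_{m}[n-1]$, matching the left-action composite.

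The two bookkeeping subtleties I expect to be the main obstacle are the following, though I believe both are routine once the setup above is in place. First, a non-identity permutation $\alpha_{i}\in\oA(k_{i})=\Sigma_{k_{i}}$ in the right action introduces a block-diagonal permutation $\alpha_{1}\oplus\cdots\oplus\alpha_{n}$ on the $C^{c}$-factor of the output; I transfer it to $R^{(k)}$ using the $\Sigma_{k}\wr C_{m}$-coinvariance built into $\bar{\mathrm{C}}^{c}R$, after which it is exactly undone by the ordering convention in the associative multiplication. Second, the block whose image wraps around the cyclic basepoint requires the $m$-cyclic relation $d_{0}\tau_{q}=d_{q}$ from \eqref{eq:cycm}, together with the identification under the preceding proposition of the right $C_{m}$-factor in the last position with $\upsilon_{m,k}$, so that the corresponding face map multiplies positions $k$ and $1$ after applying $e^{2\pi i/m}$ to position $k$---which is precisely what the formula $\psi_{j}=\phi_{n}-r_{n}v_{n,j_{n}}+r_{1}v_{1,1}$ encodes in the $uC^{c}$-composition.
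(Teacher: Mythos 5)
Your proposal takes essentially the same route as the paper: the paper's (very terse) argument is precisely that the relations imposed in the coequalizer $\bar{\mathrm C}^{c}\bT R\rightrightarrows \bar{\mathrm C}^{c}R$ are carried by \eqref{eq:CtoL} to identifications already present in the realization of the $m$-cyclic object (faces, degeneracies, twists, and the unit), and your element chase through the $u\oD^{c}$-composition formulas is a correct, more explicit version of that check. Your computation that composing with the image of $\oA$ fixes the $\zeta$'s, makes all $t_{\ell}=0$, kills the within-block $\psi$'s and preserves the boundary $\phi$'s is right, and the permutation bookkeeping is fine (it can even be normalized away at the start, since the $\Sigma_{k_i}$-coinvariance in the summand $\oA(k_{i})_{+}\sma_{\Sigma_{k_i}}R^{(k_{i})}$ of $\bT R$ lets you take all block permutations to be the identity).

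Two points of comparison with the paper's statement of what must be checked. First, you only treat the face-map identifications, but $\bT$ is the free \emph{unital} associative monad, so the coequalizer also contains summands with some $k_{i}=0$: there the left-hand composite inserts the unit $\bS\to R$ in position $i$ (landing in simplicial degree $n-1$), while the right-hand composite deletes the $i$th disk and merges the adjacent arc lengths (landing in degree $k-1<n-1$); these are reconciled by the degeneracy identifications in the cyclic realization, which is exactly why the paper explicitly lists ``degeneracies \dots and relations involving the unit $\bS\to R$.'' This case should be added; it is routine and of the same nature as your face argument, so I would not call it a failure of the approach, only an omitted case. Second, your appeal to the relation $d_{0}\tau_{q}=d_{q}$ of \eqref{eq:cycm} for a ``wrap-around'' block is not actually needed: since the $\oA$-elements map to radius-zero disks centered at $0$, the composed configuration has $\xi_{1}=\zeta_{1}$, so the forced zero arc lengths occur only among $\psi_{1},\dotsc,\psi_{k-1}$, i.e.\ among the simplex coordinates handled by the ordinary faces $d_{0},\dotsc,d_{k-2}$; invoking the twist is harmless (it is coequalized in the coend in any case, as it already was in constructing \eqref{eq:CtoL}), but unnecessary for a suitably chosen representative.
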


A more careful analysis of the equivalence relation forming 
$\bar{\mathrm C}^{c}\otimes_{\bT}R$ from $\bar{\mathrm C}^{c}R$ should
show that the natural transformation is an isomorphism, but we take a
different approach.  Consider the case when $R=\bT X$ for some
$C_{m}$-equivariant orthogonal spectrum $X$.  Then the inclusion of
$X$ in $\bT X$ induces an isomorphism 
\[
\bar{\mathrm C}^{c}X\overto{\iso} \bar{\mathrm C}^{c}\otimes_{\bT}\bT X.
\]
In this case, the $m$-cyclic object $N^{cyc,C_{m}}_{\sma,\bullet}(\bT
X)$ breaks up into a wedge sum of $m$-cyclic objects
\[
N^{cyc,C_{m}}_{\sma,\bullet}(\bT X)= N(0)\subdot \vee N(1)\subdot \vee \dotsb 
\]
where $N(n)\subdot$ consists of the $X^{(n)}$ summands in
$N^{cyc,C_{m}}_{\sma,\bullet}(\bT X)$.  Then $N(0)$ is the constant
$m$-cyclic object on $\bS$, and for $n>0$, the inclusion of $X^{(n)}$
in $(\bT X)^{(n)}=N^{cyc,C_{m}}_{\sma,n-1}(\bT X)$ induces an
isomorphism of $C_{m}$-cyclic objects
\[
N(n)\subdot \overfrom{\iso} (\Lambda_{m}[n-1]\subdot)_{+}\sma_{C_{mn}} X^{(n)}.
\]
The map
\[
\bar{\mathrm C}^{c}X\to \bar{\mathrm C}^{c}\otimes_{\bT}\bT X \to
N^{cyc,C_{m}}_{\sma}(\bT X)
\]
respects homogeneous degree and induces an isomorphism 
\[
uC^{c}(n)_{+}\sma_{Z_{n}\wr C_{m}}X^{(n)}\to \Lambda_{m}[n-1]_{+}\sma_{C_{mn}} X^{(n)}=N(n)
\]
for each $n$.  This proves the following proposition.

\begin{prop}
Let $R=\bT X$ for $X$ a $C_{m}$-equivariant orthogonal spectrum.  The
natural $S^{1}$-equivariant map $\bar{\mathrm C}^{c}\otimes_{\bT}\bT
X\to N^{cyc,C_{m}}_{\sma}(\bT X)$ of Proposition~\ref{prop:CtoL} is an
isomorphism. 
\end{prop}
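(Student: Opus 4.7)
The plan is to formalize the sketch given in the paragraph just above the proposition. First, since the left action of $\bT$ on $\bT X$ is free on $X$, the coequalizer defining $\bar{\mathrm C}^{c}\otimes_{\bT}\bT X$ collapses to produce a natural isomorphism $\bar{\mathrm C}^{c}X\iso \bar{\mathrm C}^{c}\otimes_{\bT}\bT X$ induced by the unit $X\to \bT X$.  Under this identification, the map to be shown an isomorphism is the composite
\[
\bar{\mathrm C}^{c}X\;\longrightarrow\;N^{cyc,C_{m}}_{\sma}(\bT X)
\]
given by~\eqref{eq:CtoL} and Proposition~\ref{prop:CtoL}.

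Next, both sides split as wedges of $S^{1}$-equivariant orthogonal spectra indexed by homogeneous degree in $X$: the source is tautologically $\bigvee_{n\geq 0}uC^{c}(n)_{+}\sma_{Z_{n}\wr C_{m}}X^{(n)}$, while expanding $(\bT X)^{(q+1)}$ as a wedge over arity-tuples in each tensor factor yields a splitting of the underlying $m$-cyclic object $N^{cyc,C_{m}}_{\sma,\bullet}(\bT X)=\bigvee_{n\geq 0}N(n)\subdot$ because the faces, degeneracies, and twist operator all preserve the total arity $n=n_{0}+\dotsb+n_{q}$.  The construction~\eqref{eq:CtoL} visibly respects these splittings, so it suffices to check that the induced map on the $n$th summand is an isomorphism.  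The $n=0$ case is handled by inspection ($N(0)\iso\bS$, matching the base point summand in $\bar{\mathrm C}^{c}X$).

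For $n\geq 1$, I would verify the identification
\[
N(n)\subdot\iso (\Lambda_{m}[n-1]\subdot)_{+}\sma_{C_{mn}}X^{(n)}
\]
asserted in the preceding paragraph by a direct comparison of simplicial summands: at level $q$ each side is a free wedge of copies of $X^{(n)}$ indexed by the $(q+1)$-tuples $(n_{0},\dotsc,n_{q})$ of nonnegative integers summing to $n$, and one checks that faces, degeneracies, and the twist $\tau_{q}$ act identically on both sides.  Combining this with the isomorphism $\Lambda_{m}[n-1]\times_{C_{mn}}(Z_{n}\wr C_{m})\iso uC^{c}(n)$ of the unnamed proposition immediately preceding~\eqref{eq:CtoL} yields
\[
|N(n)\subdot|\iso \Lambda_{m}[n-1]_{+}\sma_{C_{mn}}X^{(n)}\iso uC^{c}(n)_{+}\sma_{Z_{n}\wr C_{m}}X^{(n)},
\]
and a direct unraveling of the formula defining~\eqref{eq:CtoL} shows this matches the composite above on the $n$th summand.

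The main obstacle is the bookkeeping in the identification of $N(n)\subdot$ with the free $m$-cyclic orthogonal spectrum on $X^{(n)}$; in particular, one must verify that the twist operator $\tau_{n-1}$ acts on the \emph{thin} summand $X^{(n)}\subset (\bT X)^{(n)}$ (the one coming from the tuple $(n_{0},\dotsc,n_{n-1})=(1,\dotsc,1)$) precisely by the generator $\upsilon_{m,n}\in Z_{n}\wr C_{m}$.  This is where the factor of $e^{2\pi i/m}$ in the definition of $\tau_{q}$ interacts with the $C_{m}$-equivariance of the tensor factors, and it is what pins down the embedding $C_{mn}\iso \langle\upsilon_{m,n}\rangle \subset Z_{n}\wr C_{m}$ used in forming the quotient.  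Everything else is routine.
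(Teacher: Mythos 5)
Your proposal is correct and follows essentially the same route as the paper's argument (which appears in the paragraph preceding the proposition): collapse the coequalizer via the unit $X\to\bT X$, split $N^{cyc,C_{m}}_{\sma,\bullet}(\bT X)$ by homogeneous degree in $X$, identify $N(n)\subdot$ with the free $m$-cyclic object $(\Lambda_{m}[n-1]\subdot)_{+}\sma_{C_{mn}}X^{(n)}$ by matching summands, and conclude via the isomorphism $\Lambda_{m}[n-1]\times_{C_{mn}}(Z_{n}\wr C_{m})\iso uC^{c}(n)$. The summand-counting and the check that $\tau_{n-1}$ acts on the thin summand by $\upsilon_{m,n}$ are exactly the details the paper elides, so flagging them is appropriate.
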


We can now prove Theorem~\ref{thm:cycbar} (which completed the proof
of Theorem~\ref{thm:circle}).

\begin{proof}[Proof of Theorem~\ref{thm:cycbar}]
Proposition~\ref{prop:CtoL} constructs the natural transformation.
Because smash powers preserve reflexive coequalizers, both
$\bar{\mathrm C}^{c}\otimes_{\bT}(-)$ and 
$N^{cyc,C_{m}}_{\sma}(-)$ preserve reflexive coequalizers.  Any
$C_{m}$-equivariant associative ring orthogonal spectrum is the
reflexive coequalizer
\[
\xymatrix@C-.75pc{%
\bT\bT R\ar@<-.5ex>[r]\ar@<+.5ex>[r]
&\bT R\ar[r]
&R
}
\]
where the maps $\bT\bT R\to \bT R$ are given by the monadic product
$\bT\bT\to \bT$ and $\bT$ applied to the action map $\bT R\to R$.
The previous proposition implies that the natural transformation is an
isomorphism for $\bT\bT R$ and $\bT R$, and so we conclude that it is
an isomorphism for $R$.
\end{proof}

\section{Factorization homology of $V$-framed $G$-manifolds
(equivariant theory)}\label{sec:efh}

In this section, we show how to obtain a version of $G$-equivariant factorization homology of $V$-framed
$G$-manifolds as a special case of the factorization homology
discussed in Section~\ref{sec:fhne}.  We start with a review of genuine
equivariant factorization homology for $V$-framed $G$-manifolds.  We
then compare the category of $V$-framed $G$-manifolds to a category of
(rigid) $G$-objects in a category of $G$-framed embeddings as defined
in Section~\ref{sec:fhne}.  Finally, we compare the bar construction
defining factorization homology of $V$-framed $G$-manifolds with the
bar construction of Construction~\ref{cons:compressed}.
Convention~\ref{conv:manifold} is in effect in this section.

We begin by reviewing the definition of $V$-framed $G$-manifolds and
their factorization homology as defined by
Horev~\cite{Horev-EqFactHom} and Zou~\cite{Zou-EqFactHom} in the case
of a finite group $G$. We follow the latter precisely, but work in the
context of $G$-equivariant orthogonal spectra rather than $G$-spaces
(see also Remark~\ref{rem:spaces} below); we expect that this agrees
with the genuine equivariant factorization homology of the former
(which has an axiomatic characterization) but make no justification
for that here.

\begin{defn}\label{defn:Vframe}
Let $V$ be finite dimensional vector space with linear $G$-action and
$G$-invariant inner product.  A $V$-framed $G$-manifold $M$ is a
smooth manifold with smooth $G$-action together with an isomorphism of
$G$-equivariant vector bundles
\[
\theta_{M}\colon TM \iso M\times V.
\]
We write $\theta_{M,V}\colon TM\to V$ for the composite of
$\theta_{M}$ with the projection to $V$.
Given $V$-framed $G$-manifolds $L$ and $M$, a $V$-framed embedding
$L\to M$ consists of a smooth embedding $f\colon L\to M$, a map
\[
\alpha \colon TL\times [0,\infty)\to V
\]
that is a linear isomorphism $T_{x}L\to V$ on each fiber,
and a locally constant function $\ell\colon 
L\to [0,\infty)$ such that
\begin{itemize}
\item for all $\xi\in TL$, $\alpha(\xi,0)=\theta_{L,V}(\xi)$; and
\item for all $\xi\in TL_{x}$ and $t\geq \ell(x)$, $\alpha(\xi,t)=\theta_{M,V}(Df(\xi))$.
\end{itemize}
We write $\EFV$ for the $G$-space of such maps with its intrinsic
topology and conjugation $G$-action.  The composition of $V$-framed
embeddings
\[
\EFV(L,N)\times \EFV(M,N)\to \EFV(L,M)
\]
is defined by treating $\ell$ as the length for Moore path
composition: given $(f,\alpha,\ell)\in \EFV(L,M)$ and
$(f',\alpha',\ell')\in \EFV(M,N)$, the composite map
$(f'',\alpha'',\ell'')\in  \EFV(L,N)$ is defined by
\begin{gather*}
f''=f'\circ f, \qquad \ell''=\ell'+\ell\\
\alpha''(\xi,t)=\begin{cases}
\alpha(\xi,t)&0\leq t\leq \ell(x)\\
\alpha'(Df(\xi),t-\ell(x))&t\geq \ell(x)
\end{cases}
\end{gather*}
(where $\xi\in T_{x}L$).  This makes $\EFV$ a $G$-topological
category (morphisms are $G$-spaces, composition is $G$-equivariant,
identity maps are $G$-fixed points) with the identity map in
$\EFV(M,M)$ given by $(\id,\alpha,0)$ with
$\alpha(\xi,t)=\theta_{M,V}(\xi)$. 
\end{defn}

The topological category agrees with the Definition~3.6 of
\cite{Zou-EqFactHom} (with the minor correction to $\ell$ to make
Remark~3.9, \textit{ibid.} work).  For $n\geq 0$, let $V(n)=V\times \{1,\dotsc,n\}$
with $V(0)$ the empty set.  We note that $V(n)$ has a canonical
$V$-framed $G$-manifold structure with $\theta$ the usual
identification $TV\iso V\times V$.

\begin{notn}
For $n\geq 0$, let $\oR_{M}(n)=\EFV(V(n),M)$, and let $\bar{\mathrm
R}_{M}$ denote the functor from $G$-equivariant orthogonal spectra to
$G$-equivariant orthogonal spectra defined by
\[
\bar{\mathrm R}_{M}X=\bigvee_{n\geq 0}\oR_{M}(n)_{+}\sma_{\Sigma_{n}}X^{(n)}
\]
with the diagonal $G$-action.  
\end{notn}

In the case when $M=V$, composition gives $\oR_{V}$ the structure of
an operad in $G$-spaces, and $\bar{\mathrm R}_{V}$ is the free
$\oR_{V}$-algebra monad on $G$-equivariant orthogonal spectra.  We
then get a right $\bar{\mathrm R}_{V}$ action on $\bar{\mathrm
R}_{M}$, and we can form the monadic bar construction.

\begin{cons}\label{cons:gefh}
For a $V$-framed $G$-manifold $M$ and a $\oR_{V}$-algebra in
$G$-equi\-variant orthogonal spectra $X$, define
\[
\bar B_{V}(M;X):=B(\bar{\mathrm R}_{M},\bar{\mathrm R}_{V},X).
\]
For $X$ an $\oR_{V}$-algebra in $G$-equivariant orthogonal spectra
indexed on a complete universe $U$, define $G$-equivariant
factorization homology by
\[
\int_{M}X:=I_{\bR^{\infty}}^{U}\bar B_{V}(M;I^{\bR^{\infty}}_{U}X).
\]
\end{cons}

\begin{rem}\label{rem:spaces}
The analogue for spaces obviously does not use point-set change of
universe, and should take 
\[
\bar{\mathrm R}_{M}X=\coprod_{n\geq 0} \oR_{M}(n)\times_{\Sigma_{n}}X^{(n)}.
\]
Zou~\cite[3.14]{Zou-EqFactHom} uses the reduced constructions that
glue along the inclusion of units $X^{n}\to X^{n+1}$ and the operad
degeneracies (operadic composition with $\oR_{V}(0)$).  A
straight-forward Quillen~A argument shows that the bar construction
for the reduced and the unreduced functors are $G$-equivariantly
homotopy equivalent.
\end{rem}

To compare this to the theory of Section~\ref{sec:fhne}, we note that
Definition~\ref{defn:Vframe} uses path composition of homotopies
whereas Definition~\ref{defn:framemap} uses pointwise multiplication
of homotopies.  This is easy to fix by enlarging $\EFV$ to use both.

\begin{defn}\label{defn:EFVBox}
Let $\EFV[\Box]$ be the $G$-topological category whose objects are
the $V$-framed $G$-manifolds and where a map from $L$ to $M$ consists
of a smooth embedding $f\colon L\to M$, a map
\[
\alpha \colon TL\times [0,\infty)\times [0,1]\to V
\]
that is a linear isomorphism $T_{x}L\to V$ in each fiber, and a locally constant function $\ell\colon
L\to [0,\infty)$ such that
\begin{itemize}
\item for all $\xi\in TL$, $\alpha(\xi,0,0)=\theta_{L,V}(\xi)$;
\item for all $\xi\in TL_{x}$, $\alpha(\xi,\ell(x),1)=\theta_{M,V}(Df(\xi))$;
\item for all $\xi\in TL_{x}$, $s\geq \ell(x)$, and $t\in [0,1]$, $\alpha(\xi,s,t)=\alpha(\xi,\ell(x),t)$.
\end{itemize}
We topologize $\EFV[\Box]$ with its intrinsic
topology and use conjugation for the $G$-action.  The composition 
\[
\EFV[\Box](M,N)\times \EFV[\Box](L,M)\to \EFV[\Box](L,N)
\]
is defined by treating $\ell$ as the length for Moore path
composition and doing pointwise multiplication (of linear isomorphisms) on
the overlap: given 
$(f,\alpha,\ell)\in \EFV[\Box](L,M)$ and 
$(f',\alpha',\ell')\in \EFV[\Box](M,N)$, the composite map
$(f'',\alpha'',\ell'')\in  \EFV[\Box](L,N)$ is given by
\begin{gather*}
f''=f'\circ f, \qquad \ell''=\ell'+\ell\\
\alpha''(\xi,s,t)=\begin{cases}
\alpha'(\tilde\alpha(\xi,s,t),0,t)&0\leq s\leq \ell(x)\\
\alpha'(\tilde\alpha(\xi,s,t),s-\ell'(x),t)&s\geq \ell(x)
\end{cases}
\end{gather*}
where $\xi\in TL_{x}$ and $\tilde\alpha\colon TL\times [0,1]\times
[0,\infty)\to TM$ is defined by
\[
\theta_{M}(\tilde\alpha(\xi,s,t))=(f(x),\alpha(\xi,s,t))\in M\times V.
\]
\end{defn}

With this definition $\EFV$ includes (isomorphically) as the
subcategory $\EFV[-]$ of maps where $\alpha$ is constant in the $[0,1]$
direction.  The inclusion of $\EFV[-](L,M)$ in
$\EFV[\Box](L,M)$ is always a $G$-equivariant homotopy equivalence.
We can define an analogue of $\oR^{\Box}_{M}$ of $\oR_{M}$
using $\EFV[\Box]$ in place of $\EFV$.  With this, we get a corresponding
monadic bar construction $B^{\Box}_{V}(M;-)$, and for any
$G$-equivariant $\oR^{\Box}_{V}$-algebra $X$, we get a map
\begin{equation}\label{eq:efhv1}
\bar B_{V}(M;X)\to \bar B^{\Box}_{V}(M;X)
\end{equation}
which is evidently a natural $G$-equivariant homotopy equivalence. Let
$\EFV[|]$ be the subcategory where $\ell=0$; the inclusion of $\EFV[|](L,M)$ in
$\EFV[\Box](L,M)$ is always a $G$-equivariant homotopy equivalence.  We 
get an analogue $\oR^{|}_{M}$ of $\oR^{\Box}_{M}$
and a corresponding monadic bar construction $B^{|}_{V}(M;-)$; for 
any $G$-equivariant $\oR^{\Box}_{V}$-algebra $X$, the induced map
\begin{equation}\label{eq:efhv2}
\bar B^{|}_{V}(M;X)\to \bar B^{\Box}_{V}(M;X)
\end{equation}
is a natural $G$-equivariant homotopy equivalence.  It is therefore
harmless to use $\EFV[|]$ in place of $\EFV$.

In the context of Section~\ref{sec:fhne}, we will take $H=G$, but the
group $G$ will play two different roles here, and will retain the
notation $H$ for $G$ as the structure group as in that section when
needed to
avoid confusion between the different roles. (The following definition
still makes sense for $H\neq G$ provided it is $H$ that acts on $V$.)
We are then considering the category $\aE_{H,V}$ of $H$-framed
embeddings of manifolds with tangential $H,V$-structure as in
Definition~\ref{defn:framemap} and we define a $G$-equivariant
$H$-framed $G$-manifold to consist of an $H$-framed manifold $M$
together with an action of $G$ on $M$ in $\aE_{H,V}$ where each element
$g$ of $G$ acts by an $H$-framed local isometry.  We view this as a
$G$-topological category with $G$-space of maps given by the space of
maps in $\aE_{H,V}$ with $G$ acting by conjugation; we denote this
category as $\aE_{H,V;G}$. 

A $V$-framed $G$-manifold $M$ gives an object in $\aE_{G,V;G}$ as follows:
we take the $G=H$-frame bundle to be the projection map $M\times G\to
M$, and we then have an isomorphism $F_{G}M\times_{G}V\to TM$ 
\[
F_{G}M\times_{G}V=(M\times G)\times_{G}V \iso M\times V\overto{\theta_{M}^{-1}}TM
\]
which induces an isomorphism of $\GL(V)$-principal bundles from
$F_{G}M\times_{G}\GL(V)$ to the $V$-frame bundle of $M$.  (More
concretely, the adjoint to $\theta^{-1}_{M}$ gives a section of $M$ into
the $V$-frame bundle, and the $G$-action on $V$ then gives the
reduction of structure to $M\times G$.)  This defines
the tangential $G,V$-structure.  Since for $g\in G$, the diagram
\[
\xymatrix{%
TM\ar[r]^{Dg}\ar[d]_{\theta_{M}}&TM\ar[d]^{\theta_{M}}\\
M\times V\ar[r]_{(g,g)}&M\times V
}
\]
commutes, the diagonal action of $G$ on $M\times G$ gives a map of
$G=H$-frame bundles lifting the derivative on the $V$-frame bundle. In
other words, lifting $g$ to the diagonal $g$-action on $F_{G}M$ endows
$g$ with the structure of a $G=H$-framed local isometry. 

The resulting object of $\aE_{G,V;G}$ comes with the extra structure
of a section $s_{M}$ of the $G$-frame bundle, namely the section at
the identity element of $G$. This section is compatible with the group action
on $M$ (in $\aE_{G,V;G}$) and on $V$ in the sense that for all $x\in
M$ and all $g\in G$, the equation 
\[
g\cdot s_{M}(x) = s_{M}(gx) \cdot g \in (F_{G}M)_{gx},
\]
where the action on the lefthand side denotes the action of $g$ on the
$H$-frame bundle of $M$ as an $H$-framed local isometry and the action
on the righthand side denotes the right action of $g\in G$ on the
$G=H$-principal bundle.  (Using the section to identify the $G$-frames
at each point as $G$, this equation reads $g\cdot e=e\cdot g\in G$.) For
an arbitrary object $M$ of $\aE_{G,V;G}$, we call a section $s_{M}$ of
the $G$-frame bundle satisfying the equation above
\term{$G,V$-compatible}.  We can now state the precise relationship between the
categories $\EFV[|]$ and $\aE_{G,V;G}$.

\begin{thm}\label{thm:EFV}
The $G$-topological category $\EFV[|]$ is equivalent to the
$G$-topological category $\aP$ where an objects is an ordered pair $(M,s_{M})$
with $M$ an object of $\aE_{G,V;G}$ and $s_{M}$ is a
$G,V$-compatible section of its $G$-frame bundle and where the $G$-space
of maps $(L,s_{L})\to (M,s_{M})$ is the subspace of $\aE_{G,V}(L,M)$
of maps whose induced map of $G$-frame bundles takes $s_{L}$ to
the pullback of $s_{M}$.
\end{thm}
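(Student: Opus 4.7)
The plan is to construct an explicit functor $\Phi\colon \EFV[|] \to \aP$ and show it is a $G$-equivariant equivalence of $G$-topological categories. On objects, $\Phi$ sends a $V$-framed $G$-manifold $(M,\theta_M)$ to the pair $(M,s_M)$ whose construction is carried out in the paragraphs immediately preceding the theorem: the $G$-frame bundle is $M \times G$ with diagonal $G$-action, the section is $s_M(x) = (x, e)$, and the structural isomorphism $\phi_M$ is induced by $\theta_M$. The discussion there already verifies that the $G$-action on $M$ consists of $G$-framed local isometries and that $s_M$ is $G,V$-compatible, so $\Phi$ is well-defined on objects.

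The heart of the construction is the definition of $\Phi$ on morphism spaces. Given $(f,\alpha)$ in $\EFV[|](L,M)$, the section condition $Ff \circ s_L = f^*s_M$ together with $G$-equivariance of $Ff$ forces $Ff$ to be the identity $L \times G \to L \times G$ under the sections. The remaining datum $If\colon F_VL \times I \to f^*F_VM$ is a homotopy from the map on $V$-frame bundles induced by $Ff$ to the derivative $Df$; using the sections of $F_VL$ and $f^*F_VM$ determined by $\theta_L$ and $\theta_M$, $\GL(V)$-bundle maps over $L$ correspond to functions $L \to \GL(V)$, and we take $\Phi(f,\alpha)$ to have $If$ corresponding to the homotopy $(x,t) \mapsto \alpha(\,\cdot\,,0,t)|_{T_xL} \circ \theta_L^{-1}|_x \in \GL(V)$. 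The boundary conditions on $\alpha$ match precisely: at $t=0$ one gets the identity (matching the map induced by $Ff$), and at $t=1$ one gets $\theta_{M,V}|_{f(x)} \circ Df_x \circ \theta_L^{-1}|_x$, which is the derivative in coordinates.

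Checking functoriality reduces to verifying that the Moore-path composition of Definition~\ref{defn:EFVBox} specialized to $\ell=0$ corresponds to pointwise multiplication of $\GL(V)$-valued homotopies; unwinding the definition of $\tilde\alpha$, this is a direct computation, and the identity morphisms clearly correspond. $G$-equivariance is then a direct verification, since the conjugation $G$-action on both mapping $G$-spaces translates to the same $G$-action on $\GL(V)$-valued homotopies. For essential surjectivity, given $(M,s_M) \in \aP$, the $G,V$-compatibility of $s_M$ makes the map $\sigma\colon M\times G \to F_GM$, $(x,g) \mapsto s_M(x)\cdot g$, a $G$-equivariant isomorphism of principal $G$-bundles; composed with $\phi_M$ it yields a $V$-framing $\theta_M$ on $M$, and $\sigma$ is then an isomorphism in $\aP$ from $\Phi(M,\theta_M)$ to $(M,s_M)$. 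For full faithfulness, both mapping $G$-spaces fiber $G$-equivariantly over the space of smooth embeddings $f\colon L \to M$, and the analysis above identifies each fiber canonically with the space of homotopies $L\times[0,1] \to \GL(V)$ from the constant identity to $x \mapsto \theta_{M,V}|_{f(x)} \circ Df_x \circ \theta_L^{-1}|_x$. We expect the main technical obstacle to be bookkeeping: keeping track of the interplay between the two trivializations (of $F_G$ from the sections and of $F_V$ from $\theta$), the structural isomorphism $\phi$ relating them, the $G$-action on $V$, and the conjugation $G$-action on morphism spaces.
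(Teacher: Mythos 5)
Your proposal is correct and follows essentially the same route as the paper: the same object assignment, and the same identification of the homotopy datum $If$ with the $\GL(V)$-valued homotopy $x\mapsto \alpha((x,-),0,t)\circ\theta_{L,x}^{-1}$ under the trivializations coming from $s_M$ and $\theta_M$. The only cosmetic difference is that you conclude via fully faithful plus essentially surjective (using the principal-bundle isomorphism $(x,g)\mapsto s_M(x)\cdot g$), whereas the paper writes down the explicit inverse functor and checks the two composites; the underlying formulas are identical.
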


\begin{proof}
We have already described the functor in the forward direction on
objects.  On maps, it takes $(f,\alpha,0)\in \EFV[|](L,M)$ as above to the
map $(f,Ff,If)\in \aE_{G,V}(L,M)$ defined as follows.  The map
\[
Ff\colon F_{G}L=L\times G\to f^{*}F_{G}M\iso L\times G
\]
is $f\times \id\colon L\times G\to L\times G$, which takes
the section $s_{L}$ of $F_{G}L$ to the section $f^{*}s_{M}$ of
$f^{*}F_{G}M$ as required.  We note that for $x\in L$, the element
$s_{L}(x)\in (F_{G}L)_{x}$ maps to the $V$-frame $\theta^{-1}_{L,x}$ in
$(F_{V}L)_{x}=\Iso(V,T_{x}L)$, where
$\theta^{-1}_{L,x}\colon V\iso T_{x}L$ denotes the restriction of the
bundle map $\theta_{L}^{-1}\colon L\times V\iso TL$ to the point $x$.
The $\GL(V)$-principal bundle homotopy $If$ needs to start at the
$\GL(V)$-principal bundle map that sends 
$\theta^{-1}_{L,x}$ to $\theta^{-1}_{M,f(x)}$ and end at the
map that takes $\theta^{-1}_{L,x}$ to $Df_{x}\circ \theta^{-1}_{L,x}$.
Define $If$ fiberwise by
\begin{gather*}
If_{x}\colon \Iso(V,T_{x}L)\times [0,1]\to \Iso(V,T_{f(x)}M)\\
If_{x}(\phi,t)=\theta_{M,f(x)}^{-1}\circ \alpha((x,-),0,t)\circ \phi,
\end{gather*}
where we understand $\alpha((x,-),0,t)$ as a linear isomorphism
$T_{x}L\to V$ in the composition formula.  The formula is clearly
right $\GL(V)$-equivariant and specifies a homotopy of maps of
$\GL(V)$-principal bundles. 
Then since $\alpha((x,-),0,0)=\theta_{L,x}$, we have
\[
If_{x}(\theta_{L,x}^{-1},0)=\theta_{M,f(x)}^{-1}\circ \theta_{L,x}\circ \theta^{-1}_{L,x}=\theta^{-1}_{M,f(x)},
\]
and since $\alpha((x,-),0,1)=\theta_{M,x}\circ Df_{x}$, we have 
\[
If_{x}(\theta_{L,x}^{-1},1)=\theta^{-1}_{M,f(x)}\circ \theta_{M,x}\circ Df_{x}\circ \theta_{L,x}^{-1}
=Df_{x}\circ \theta_{L,x}^{-1},
\]
as required.

Now given $M,s_{M}$, we get a $V$-framed $G$-manifold structure on $M$
using the given $G$ action on $M$ and the isomorphism
$\theta^{-1}_{M}\colon M\times V\iso TM$ adjoint to the section $s_{M}$;
the $G,V$-compatibility precisely implies that this isomorphism is
$G$-equivariant. Given $(L,s_{L})$, $(M,s_{M})$, and a map
$(f,Ff,If)\in \aE_{G,V}(L,M)$ that on $G$-frame bundles sends $s_{L}$
to $f^{*}s_{M}$, we produce a map $(f,\alpha,0)\in \EFV[|](L,M)$,
by defining
\[
\alpha((x,-),0,t)=\theta_{M,f(x)}\circ If_{x}(\theta^{-1}_{L,x},t)\circ \theta_{L,x}.
\]
Since by hypothesis $Ff$ sends $s_{L}$ to $f^{*}s_{M}$, we have that 
$If_{x}(\theta^{-1}_{L,x},0)=\theta^{-1}_{M,f(x)}$, and we see that
$\alpha((x,-),0,0)=\theta_{L,x}$, as required.  Since by definition 
$If_{x}(\theta^{-1}_{L,x},1)=Df_{x}\circ \theta^{-1}_{L,x}$, we see
that $\alpha((x,-),0,1)=\theta_{M,f(x)}Df_{x}$, as also required.

It is straight-forward to check that these formulas define functors,
that the composite functor on $\EFV[|]$ is the identity, and that the
composite functor on the pair category is naturally isomorphic to the
identity.
\end{proof}

We note that both $\EFV[|]$ and $\aE_{G,V;G}$ have a coproduct, given
on the underlying $G$-manifolds by disjoint union, and that the
functor in the previous definition preserves the coproduct.

For a $V$-framed $G$-manifolds, the main difference (philosophically)
between maps in $\EFV[|]$ and maps in $\aE_{G,V;G}$ is that for $g\in
G$, the self-map $g\colon M\to M$ in $\aE_{G,V;G}$ is never in the
image of the maps in $\EFV[|]$ unless $g=e$ (even if $G$ acts
trivially on both the vector space $V$ and the underlying smooth
manifold of $M$).  We can put this action back in with the following
extension of $\EFV[|]$. 

\begin{defn}
Let $\EFV[|\rtimes G]$ be the topological category where the objects are
the $V$-framed $G$-manifolds and where the maps are defined by
\[
\EFV[|\rtimes G](L,M)=\EFV[|](L,M)\times G^{\pi_{0}L},
\]
where $G^{\pi_{0}L}$ denotes the space of locally constant maps from
$L$ to $G$.  Using the observation about coproducts above, writing
$L=L_{1}\amalg \dotsb \amalg L_{p}$ for the components of $L$, we have
\[
\EFV[|\rtimes G](L,M)\iso \EFV[|\rtimes G](L_{1},M)\times \dotsb \times \EFV[|\rtimes G](L_{p},M);
\]
moreover, writing $M=M_{1}\amalg \dotsb \amalg M_{q}$ for the
components of $M$, we also have
\[
\EFV[|\rtimes G](L_{i},M)\iso \EFV[|\rtimes G](L_{i},M_{1})\amalg
\dotsb \amalg \EFV[|\rtimes G](L_{i},M_{q}),
\]
and so it suffices to describe compositions 
\[
\EFV[|\rtimes G](M,N)\times \EFV[|\rtimes G](L,M)\to \EFV[|\rtimes G](L,N)
\]
when all of $L$, $M$, and $N$ are connected.  For 
\begin{gather*}
(\phi,g)\in \EFV[|](L,M)\times G=\EFV[|\rtimes G](L,M),\\
(\phi',g')\in \EFV[|](M,N)\times G=\EFV[|\rtimes G](L,M),
\end{gather*}
the composite in $\EFV[|\rtimes G](L,N)$ is given by $(\phi'\circ
{{}^{g'}\!\phi},g'g)$ where the superscript $g'$ denotes the $G$-action on
$\EFV[|](L,M)$. 
\end{defn}

We then have a functor $\EFV[|\rtimes G]\to \aE_{G,V;G}$ which sends
$(\phi,g)\in \EFV[|\rtimes G](L,M)$ to the composite of the self-map
of $L$ given by $g$ and the image of the map $\phi$ under the functor
of Theorem~\ref{thm:EFV}.  Composition in $\EFV[|\rtimes G]$ is
defined precisely to make this functorial. For $V$-framed
$G$-manifolds $L,M$, every map in $\aE_{G,V;G}(L,M)$ comes with an
associated continuous map $L\to G$ given by the inherent map on $G$-frame bundles
and canonical sections; when $G$ is finite the map is locally constant
and decomposes the element of $\aE_{G,V;G}(L,M)$ as an element of
$\EFV[|\rtimes G]$.  This proves the following, which we regard as a
corollary of Theorem~\ref{thm:EFV}.

\begin{cor}\label{cor:EFV}
If $G$ is a finite group, then the topologically enriched functor
$\EFV[|\rtimes G]\to \aE_{G,V;G}$ is full and faithful: for all $L,M$
in $\EFV[|\rtimes G]$, the map 
\[
\EFV[|\rtimes G](L,M)\to \aE_{G,V;G}(L,M)
\]
is a homeomorphism.
\end{cor}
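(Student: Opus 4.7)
The plan is to construct an explicit continuous inverse to the map on mapping spaces induced by the functor. Given $F = (f, Ff, If) \in \aE_{G,V;G}(L,M)$, the principal bundle map $Ff\colon F_G L \to f^* F_G M$ sends the canonical section $s_L$ to a section of $f^* F_G M$ that differs from $f^* s_M$ by right multiplication by a unique continuous function $\gamma_F\colon L \to G$, characterized by $Ff(s_L(x)) = f^* s_M(x) \cdot \gamma_F(x)$. The image of the embedding $\EFV[|] \hookrightarrow \aE_{G,V;G}$ of Theorem~\ref{thm:EFV} consists exactly of those $F$ with $\gamma_F \equiv e$. When $G$ is finite, $\gamma_F$ takes values in a discrete group, is therefore locally constant, and determines a well-defined element of $G^{\pi_0 L}$.

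A direct calculation using $G,V$-compatibility shows that the image of a pair $(\phi, g)$ under the functor $\EFV[|\rtimes G] \to \aE_{G,V;G}$ has twist function exactly $g$: for $y$ in a component $L_i$ of $L$ with $g|_{L_i} = g_i \in G$, the self-map $g_L$ sends $s_L(y)$ on frame bundles to $g_i \cdot s_L(y) = s_L(g_i y) \cdot g_i$, and then the image of $\phi$ (which preserves canonical sections by Theorem~\ref{thm:EFV}) sends this further to $f^* s_M(\phi(g_i y)) \cdot g_i$. For fullness of the functor on mapping spaces, I check that $F \circ (\gamma_F)_L^{-1}$ has trivial twist function, so by Theorem~\ref{thm:EFV} it lifts uniquely to some $\phi \in \EFV[|](L,M)$; the pair $(\phi, \gamma_F)$ then maps to $F$ under the functor.

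For faithfulness, if $(\phi, g)$ and $(\phi', g')$ both map to $F$, the twist calculation above forces $g = \gamma_F = g'$, and then Theorem~\ref{thm:EFV} forces $\phi = \phi'$. Continuity of the inverse $F \mapsto (\phi, \gamma_F)$ follows because pointwise evaluation $F \mapsto \gamma_F(x) \in G$ is continuous for each $x \in L$ and $G$ is discrete, so these evaluations are locally constant in $F$, yielding a continuous map to $G^{\pi_0 L}$; continuity of the $\phi$ component is then inherited from Theorem~\ref{thm:EFV}. The main obstacle is the bookkeeping involved in tracking how the self-map $(\gamma_F)_L$ permutes components of $L$; the finiteness of $G$ is essential precisely because $\gamma_F$ need not be locally constant for positive-dimensional $G$.
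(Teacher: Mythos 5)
Your proof is correct and follows essentially the same route as the paper: the paper's own argument is exactly to read off the twist function $L\to G$ from the map of $G$-frame bundles measured against the canonical sections, note that finiteness of $G$ makes it locally constant, and use it to decompose a map in $\aE_{G,V;G}(L,M)$ as a pair in $\EFV[|\rtimes G](L,M)$. Your explicit verification that the image of $(\phi,g)$ has twist $g$ (via the $G,V$-compatibility of the sections), together with the faithfulness and continuity checks, just spells out what the paper treats as immediate from Theorem~\ref{thm:EFV}.
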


The relationship is not so tight when $G$ is a positive dimensional
compact Lie group; however, for the purposes of factorization
homology, it suffices to understand the relationship for the disjoint
union of copies of $V$.  In that case we can study
$\aE_{G,V;G}(V(n),M)$ in terms of configurations; looking at the
center point of the disk (in $M$ and in the $G$-frame bundle map),
we get a commutative diagram
\[
\xymatrix@C-3pc{%
\EFV[|\rtimes G](V(n),M)\ar[dr]\ar[rr]&&\aE_{G,V;G}(V(n),M)\ar[dl]\\
&C(n,M)\times G^{n}
}
\]
and well-known arguments show that the downward maps are homotopy
equivalences.  Keeping track of equivariance, we have a left
$G$-action on $M$ and a left $\Sigma_{n}\wr G$ action on $V(n)$ that
the mapping spaces converts to a right $\Sigma_{n}\wr G$-action, which
is compatible with the right $\Sigma_{n}\wr G$ action on $G^{n}$.  An
equivariant elaboration of the usual argument shows that the maps are
in fact equivariant homotopy equivalences: 

\begin{prop}
For an arbitrary compact Lie group $G$, and a $V$-framed $G$-manifold
$M$, the map
\[
\EFV[|\rtimes G](V(n),M)\to \aE_{G,V;G}(V(n),M)
\]
is a $G\times (\Sigma_{n}\wr G)^{\op}$-equivariant homotopy equivalence.
\end{prop}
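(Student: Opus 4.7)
The plan is to show that each of the two downward projections from the mapping spaces to $C(n,M)\times G^{n}$ is itself a $G\times(\Sigma_{n}\wr G)^{\op}$-equivariant homotopy equivalence, and then deduce the claim from the commutative triangle by two-out-of-three.

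By averaging I fix a $G$-invariant Riemannian metric on $M$ with equivariant exponential $\exp$, and using the $G$-invariant inner product on $V$ I fix an equivariant continuous family of radial squashing diffeomorphisms $\rho_{r}\colon V\to B_{r}(0)\subset V$. Using a $G$-invariant partition of unity on $C(n,M)$ I construct a continuous $G$-invariant cutoff $\delta\colon C(n,M)\to(0,\infty)$ bounded by the injectivity radius at each $p_{i}$ and by one-third the minimum pairwise distance of the configuration. Using these data I build an equivariant section $\sigma$ of either projection whose underlying embedding sends $(v,i)$ to $\exp_{p_{i}}(\theta_{M,p_{i}}^{-1}(\rho_{\delta(p)}(g_{i}\cdot v)))$, together with the canonical principal-bundle datum in the $\aE_{G,V;G}$ case and with the corresponding straight-line Moore-path datum $(\alpha,g)$ in the $\EFV[|\rtimes G]$ case.

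To deform the identity into $\sigma\circ\pi$ equivariantly I run a two-stage homotopy. Stage one precomposes the given embedding $\phi$ with the radial scaling $v\mapsto tv$ on each copy of $V$, with $t$ decreasing continuously from $1$ to $\min\{1,\delta(p)/R(\phi)\}$, where $R(\phi)$ is a continuous $G$-invariant measure of the $\exp$-distance from $p_{i}$ to $\phi(V\times\{i\})$; the frame-bundle map, the $\GL(V)$-homotopy $If$, and the Moore datum $\alpha$ are transported under this scaling by the evident canonical formulas. Stage two straight-line interpolates the rescaled embedding into $\sigma(\pi(\phi))$ inside the normal chart of each center, with the accompanying path in $\GL(V)$ chosen to run along the straight line between the two linear isomorphisms at each point; this is well defined because both endpoints lie in the same component of $\GL(V)$ and the relevant path spaces with fixed endpoints are contractible.

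The chief technical obstacle is producing this deformation continuously and equivariantly across the full parameter space, especially near configurations where $\delta$ tends to zero. Using $\delta$ directly in the definition of the rescaling parameter handles the continuity, while the equivariance is automatic: $\exp$, $\theta_{M}$, and the metric are $G$-equivariant by construction, $\rho_{r}$ is $G$-equivariant on $V$, and every step is symmetric in the index $i$ and right-linear in the $g_{i}$-factor, so the deformation is $G\times(\Sigma_{n}\wr G)^{\op}$-equivariant throughout.
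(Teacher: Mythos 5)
Your overall strategy --- factor both mapping spaces over $C(n,M)\times G^{n}$ via the center-point map, show each downward projection is a $G\times(\Sigma_{n}\wr G)^{\op}$-equivariant homotopy equivalence by an equivariant deformation retraction onto a section, and conclude by two-out-of-three --- is exactly what the paper does; the paper displays the same commutative triangle and then appeals to ``an equivariant elaboration of the usual argument'' without giving details. Your equivariance bookkeeping (invariant metric and exponential, invariant cutoff $\delta$, symmetry in the index $i$ and right-linearity in the $g_{i}$) is the right elaboration.

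There is, however, a genuine gap in stage two of your deformation. The justification that ``the relevant path spaces with fixed endpoints are contractible'' is false: the fixed-endpoint path space of $\GL(V)$ has the homotopy type of $\Omega\GL(V)\simeq\Omega O(V)$, which is neither contractible nor even connected once $\dim V\geq 2$. Relatedly, the straight line in $\mathrm{End}(V)$ between the derivative of the rescaled embedding and the derivative of $\sigma(\pi(\phi))$ need not stay in $\GL(V)$ --- these two isomorphisms are only known to lie in the same component, not to be close --- so the interpolated maps need not be immersions and the accompanying ``straight-line'' $\GL(V)$-datum is undefined. The fiber over a point of $C(n,M)\times G^{n}$ is (up to the contractible choice of embedding germ) a \emph{based} path space: a derivative together with the homotopy $If$ (resp.\ $\alpha$) from the framing to that derivative, with the derivative free to move. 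The contraction must therefore use that homotopy itself: first run $If$ backwards to rotate the derivative of the shrunken embedding onto the framing $\theta^{-1}_{M,p_{i}}\circ g_{i}$, simultaneously shrinking $If$ to the constant homotopy; only once the derivatives agree with those of the section does straight-line interpolation of the embeddings inside the normal chart make sense. With that step inserted the argument goes through and remains equivariant, since $If$ is part of the data on which the groups act by conjugation.
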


Unlike in the category $\EFV$, in the category $\EFV[|]$, the vector
space $V$ and its unit disk $D$ are isomorphic:
choosing a smooth diffeomorphism $\usc\colon [0,1)\to [0,\infty)$ that is the
identity near 0, we can use $\usc$ radially to get a diffeomorphism of
$G$-manifolds $\USC\colon D\to V$. We take $\alpha$ to be 
\[
\alpha((x,v),0,t)=(\usc'(|x|))^{t}v_{x}+(\usc(|x|)/|x|)^{t}v_{\perp x}
\]
where $v_{x}$ denotes orthogonal projection in the $x$ direction and
$v_{\perp x}$ denotes the orthogonal complement; for fixed $x,v$, this is a
homotopy from the identity to the derivative of $\USC$.  (For points
$x$ near $0$, we have $\usc'(|x|)=1$ and $\usc(|x|)/|x|=1$, so
$\alpha$ is the constant homotopy, and we understand the formula this
way also at the point $x=0$.)

Writing $\oR^{|D}_{M}(n)=\EFV[|](D(n),M)$ for $D(n)=D\times
\{1,\dotsc,n\}$ and
\[
\bar{\mathrm R}^{|D}_{M}X=\bigvee_{n\geq 0}\oR^{|D}_{M}(n)_{+}\sma_{\Sigma_{n}}X^{(n)}
\]
(with the diagonal $G$-action), we then get a monadic bar construction
\[
\bar B^{|D}_{V}(M;-):=B(\bar{\mathrm R}^{|D}_{M},\bar{\mathrm R}^{|D}_{D},-),
\]  
and the isomorphism $\USC$ above induces a $G$-equivariant isomorphism 
\begin{equation}\label{eq:efhd1}
\bar B^{|D}_{V}(M;-)\iso \bar B^{|}_{V}(M;-)
\end{equation}
where we also use $\USC$ to translate between the inputs.  

The little $V$-disk operad $\oD_{V}$ admits an obvious map of
$G$-equivariant operads $\oD_{V}\to \oR^{|D}_{D}$, where we interpret
an affine transformation $\lambda(v)=v_{0}+rv$ as the map in $\EFV[|]$
given by $\lambda$ and the homotopy $\alpha((x,v),0,t)=r^{t}v$.
Looking at configuration spaces, we see that the maps 
\[
\oD_{V}(n)\to \oR^{|D}_{D}(n)
\]
are $G\times \Sigma_{n}^{\op}$-equivariant homotopy
equivalences, and so the induced map on monadic bar constructions
\begin{equation}\label{eq:efhd2}
B(\bar{\mathrm R}^{|D}_{M},\bar\bD,-)\to
B(\bar{\mathrm R}^{|D}_{M},\bar{\mathrm R}^{|D}_{D},-)=\bar B^{|D}_{V}(M;-)
\end{equation}
is a natural $G$-equivarant homotopy equivalence. 

To compare the monadic bar construction $B(\bar{\mathrm
R}^{|D}_{M},\bar\bD,-)$ to the monadic bar construction 
\[
\bar B(M;-)=B(\bar\bE_{M},\bar\bD,-)
\]
of Section~\ref{sec:fhne}, we observe that as functors from
$G$-equivariant orthogonal spectra to itself, the functor
\[
\bar{\mathrm R}^{|D}_{M}=\bigvee_{n\geq 0}\EFV[|](D(n),M)_{+}\sma_{\Sigma_{n}}X^{(n)}
\]
with the diagonal $G$-action is naturally isomorphic to the functor
\[
\bigvee_{n\geq 0}\EFV[|\rtimes G](D(n),M)_{+}\sma_{\Sigma_{n}\wr G}X^{(n)}
\]
with the $G$-action given by the left $G$-action on $\EFV[|\rtimes G](D(n),M)$.
The functor in Corollary~\ref{cor:EFV} then gives a natural transformation 
\[
\bar{\mathrm R}^{|D}_{M}(X)\to \bar \bE_{M}(X)=
\bigvee_{n\geq 0}\aE_{G,V}(D(n),M)_{+}\sma_{\Sigma_{n}\wr G}X^{(n)},
\]
which is a natural $G$-equivariant homotopy equivalence (an
isomorphism when $G$ is finite).  Moreover,
the natural transformation $\bar{\mathrm R}^{|D}_{M}\to \bar\bE_{M}$
is a map of right $\bar\bD$-functors, and so we get an induced map on
bar constructions that is also a natural $G$-equivariant homotopy
equivalence:

\begin{prop}\label{prop:efhd3}
For any $G$-equivariant $\oD_{V}$-algebra $X$, the natural map
\[
B(\bar{\mathrm R}^{|D}_{M},\bar\bD,X)\to 
B(\bar\bE_{M},\bar\bD,X)=\bar B(M;X)
\]
is a natural $G$-equivariant homotopy equivalence.
\end{prop}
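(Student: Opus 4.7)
The plan is to reduce the claim to a simplicial-degreewise comparison of the two bar constructions, then pass to geometric realization. In simplicial degree $q$, the map under consideration is
\[
\bar{\mathrm R}^{|D}_{M}\bar\bD^{q}X\to \bar\bE_{M}\bar\bD^{q}X,
\]
induced by applying the natural transformation $\bar{\mathrm R}^{|D}_{M}\to \bar\bE_{M}$ of right $\bar\bD$-functors to the $G$-equivariant orthogonal spectrum $Y=\bar\bD^{q}X$. The $\bar\bD^{q}$ factors are identical on the two sides; only the embedding-space coefficients differ.

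First I would verify that $\bar{\mathrm R}^{|D}_{M}(Y)\to \bar\bE_{M}(Y)$ is a $G$-equivariant homotopy equivalence for every $G$-equivariant orthogonal spectrum $Y$. Summand by summand, this is the map
\[
\EFV[|\rtimes G](D(n),M)_{+}\sma_{\Sigma_{n}\wr G}Y^{(n)}\to \aE_{G,V;G}(D(n),M)_{+}\sma_{\Sigma_{n}\wr G}Y^{(n)}.
\]
After transporting from $V(n)$ to $D(n)$ via the $G$-diffeomorphism $\USC\colon D\to V$ (so that the two categories of maps agree with the ones treated in the preceding proposition and Corollary \ref{cor:EFV}), the map of embedding spaces is a $G\times(\Sigma_{n}\wr G)^{\op}$-equivariant homotopy equivalence by that proposition. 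Theorem \ref{thm:DEcomp}.(i) then guarantees that both $\EFV[|\rtimes G](D(n),M)$ and $\aE_{G,V;G}(D(n),M)$ have the $(\Sigma_{n}\wr G)$-equivariant homotopy type of free $(\Sigma_{n}\wr G)$-CW complexes, so smashing with $Y^{(n)}$ and taking orbits under $\Sigma_{n}\wr G$ preserves the $G$-equivariant homotopy equivalence on each summand.

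Second, I would apply this pointwise assertion to $Y=\bar\bD^{q}X$ in each simplicial degree $q$, obtaining a map of simplicial $G$-equivariant orthogonal spectra that is a $G$-equivariant homotopy equivalence in every degree. Third, I would pass to geometric realization: because the degeneracies of both bar constructions are obtained from the unit $\id\to \bar\bD$ of the monad $\bar\bD$ and are levelwise Hurewicz cofibrations (the usual wedge-summand inclusions), both simplicial objects are proper, so geometric realization converts the degreewise $G$-equivariant homotopy equivalence into a $G$-equivariant homotopy equivalence of realizations.

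The main obstacle I anticipate is the bookkeeping around the wreath-product equivariance in Theorem \ref{thm:DEcomp}.(i), which is stated for $\aE_{H}(D\times\{1,\dotsc,n\},M)$ in the $H$-framed setting of Section \ref{sec:fhne} (with $H=G$ here). Because the equivariant homotopy equivalence $\EFV[|\rtimes G](D(n),M)\to \aE_{G,V;G}(D(n),M)$ is $(\Sigma_{n}\wr G)$-equivariant, the free CW-structure transports automatically from the target to the source, but one should double-check that the map $\bar{\mathrm R}^{|D}_{M}\to \bar\bE_{M}$ is indeed a map of right $\bar\bD$-modules in the sense needed (the compatibility with the compositions defining the right $\bar\bD$-actions), which follows directly from the functoriality of the inclusion $\EFV[|\rtimes G]\to \aE_{G,V;G}$ on the relevant composition maps. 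Everything else is a routine propriety-plus-realization argument.
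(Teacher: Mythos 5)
Your proposal is correct and follows essentially the same route as the paper: identify $\bar{\mathrm R}^{|D}_{M}\to\bar\bE_{M}$ as a map of right $\bar\bD$-functors that is a $G$-equivariant homotopy equivalence on each spectrum, then pass to the bar construction and its realization. The one point worth adjusting is your appeal to Theorem~\ref{thm:DEcomp}.(i): it is both unnecessary and undesirable here. The preceding proposition already gives a genuine $G\times(\Sigma_{n}\wr G)^{\op}$-equivariant homotopy equivalence $\EFV[|\rtimes G](D(n),M)\to\aE_{G,V;G}(D(n),M)$ (with equivariant inverse and homotopies), and the construction $(-)_{+}\sma_{\Sigma_{n}\wr G}Y^{(n)}$ is a continuous functor, so it preserves this equivalence with no freeness or CW hypothesis on the embedding spaces; the freeness statement is only needed when one works with weak equivalences rather than honest equivariant homotopy equivalences. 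Moreover, Theorem~\ref{thm:DEcomp} is one of the [CFH]-dependent statements, and the paper is explicit that Section~\ref{sec:efh} is independent of that work in progress, so invoking it would weaken the result's standing. Your realization step (properness of both simplicial objects via the well-pointed unit of $\bar\bD$, then a degreewise equivalence realizes to an equivalence) is the standard argument and matches what the paper leaves implicit.
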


All together, \eqref{eq:efhv1}, \eqref{eq:efhv2}, \eqref{eq:efhd1},
\eqref{eq:efhd2}, and Proposition~\ref{prop:efhd3} give a zigzag of
natural $G$-equivariant homotopy equivalences between genuine
equivariant factorization homology for $V$-framed $G$-manifolds as
defined in Definition~\ref{cons:gefh} (inspired by
Horev~\cite{Horev-EqFactHom} and Zou~\cite{Zou-EqFactHom})  and the
theory 
\[
I_{\bR^{\infty}}^{U}\bar B(M;I^{\bR^{\infty}}_{U}X) 
\]
of Section~\ref{sec:fhne}.


\bibliographystyle{plain}
\begingroup
   \def\refname{Missing Pieces}

\endgroup

\bibliography{bluman}

\ifx\nolinkurl\undefined\def\nolinkurl#1{\texttt{\chardef~126\relax #1}}\fi
\begin{thebibliography}{10}

\bibitem{Andrade-Thesis}
Ricardo Andrade.
\newblock {\em From manifolds to invariants of {E}n-algebras}.
\newblock ProQuest LLC, Ann Arbor, MI, 2010.
\newblock Thesis (Ph.D.)--Massachusetts Institute of Technology.

\bibitem{ABGHLM}
Vigleik Angeltveit, Andrew~J. Blumberg, Teena Gerhardt, Michael~A. Hill, Tyler
  Lawson, and Michael~A. Mandell.
\newblock Topological cyclic homology via the norm.
\newblock {\em Doc. Math.}, 23:2101--2163, 2018.

\bibitem{AyalaFrancis-Poincare}
David Ayala and John Francis.
\newblock Poincar\'{e}/{K}oszul duality.
\newblock {\em Comm. Math. Phys.}, 365(3):847--933, 2019.

\bibitem{BHM}
M.~B{\"o}kstedt, W.~C. Hsiang, and I.~Madsen.
\newblock The cyclotomic trace and algebraic {$K$}-theory of spaces.
\newblock {\em Invent. Math.}, 111(3):465--539, 1993.

\bibitem{GreenleesMay-MUCompletion}
J.~P.~C. Greenlees and J.~P. May.
\newblock Localization and completion theorems for {$M{\rm U}$}-module spectra.
\newblock {\em Ann. of Math. (2)}, 146(3):509--544, 1997.

\bibitem{HHR}
M.~A. Hill, M.~J. Hopkins, and D.~C. Ravenel.
\newblock On the nonexistence of elements of {K}ervaire invariant one.
\newblock {\em Ann. of Math. (2)}, 184(1):1--262, 2016.

\bibitem{HillHopkins-EqSymMon}
Michael~A. Hill and Michael~J. Hopkins.
\newblock Equivariant symmetric monoidal structures.
\newblock Preprint arXiv:1610.03114, 2016.

\bibitem{Horev-EqFactHom}
Asaf Horev.
\newblock Genuine equivariant factorization homology.
\newblock Preprint arXiv:1910.07226, 2019.

\bibitem{KupersMiller-EnCells}
Alexander Kupers and Jeremy Miller.
\newblock {$E_n$}-cell attachments and a local-to-global principle for
  homological stability.
\newblock {\em Math. Ann.}, 370(1-2):209--269, 2018.

\bibitem{NielsenRognes-Singer}
Sverre Lun\o~e Nielsen and John Rognes.
\newblock The topological {S}inger construction.
\newblock {\em Doc. Math.}, 17:861--909, 2012.

\bibitem{Lurie-HA}
J.~Lurie.
\newblock Higher algebra.
\newblock Preprint. Available at
  \nolinkurl{http://www.math.harvard.edu/~lurie/HA.pdf}, 2017.

\bibitem{MM}
M.~A. Mandell and J.~P. May.
\newblock Equivariant orthogonal spectra and {$S$}-modules.
\newblock {\em Mem. Amer. Math. Soc.}, 159(755):x+108, 2002.

\bibitem{GILS}
J.~P. May.
\newblock {\em The geometry of iterated loop spaces}.
\newblock Springer-Verlag, Berlin, 1972.
\newblock Lectures Notes in Mathematics, Vol. 271.

\bibitem{Miller-Nonabelian}
Jeremy Miller.
\newblock Nonabelian {P}oincar\'{e} duality after stabilizing.
\newblock {\em Trans. Amer. Math. Soc.}, 367(3):1969--1991, 2015.

\bibitem{SalvatoreWahl}
Paolo Salvatore and Nathalie Wahl.
\newblock Framed discs operads and {B}atalin-{V}ilkovisky algebras.
\newblock {\em Q. J. Math.}, 54(2):213--231, 2003.

\bibitem{Schwede-EquivariantNotes}
Stefan Schwede.
\newblock Lectures on equivariant stable homotopy theory.
\newblock Preprint
  \url{https://www.math.uni-bonn.de/people/schwede/equivariant.pdf}, 2022.

\bibitem{Zou-EqFactHom}
Foling Zou.
\newblock A geometric approach to equivariant factorization homology and
  nonabelian poincar\'e duality.
\newblock Preprint arXiv:2008.08234, 2020.

\end{thebibliography}


\begin{thebibliography}{1}
\bibitem[CFH]{CFH} Andrew~J. Blumberg and Michael~A. Mandell.
\newblock Configuration spaces and factorization homology. In preparation.

\bibitem[PMI]{PMI} Andrew~J. Blumberg, Michael~A. Hill, and Michael~A. Mandell.
\newblock Parametrized multiplicative induction in equivariant stable
homotopy theory. In progress.
\end{thebibliography}

\end{document}